\newcommand{\veps}{\varepsilon} % another example of author macro
\newcommand{\bbE}{\mathbf{E}}
\newcommand{\Var}{\mathbf{Var}}
\theoremstyle{plain} 
\newtheorem{theorem}{Theorem}[section]
\newtheorem{corollary}[theorem]{Corollary}
\newtheorem{lemma}[theorem]{Lemma}
\theoremstyle{definition}
\newtheorem{definition}[theorem]{Definition}
\newtheorem{example}[theorem]{Example}
\newtheorem{remark}[theorem]{Remark}
\begin{document}

\title{A central limit theorem for a card shuffling problem}

%\DEDICATORY{Dedicated to the memory of ...} % Optional

%%%%%%%%%%%%%%%%%%%%%%%%%%%%%%%%%%%%%%%%%%%%%%%%%%%%%%%%%%%%%%%%%%%
%%                                                               %%
%% Authors (please edit and customize):                          %%
%%                                                               %%
%%%%%%%%%%%%%%%%%%%%%%%%%%%%%%%%%%%%%%%%%%%%%%%%%%%%%%%%%%%%%%%%%%%

\author{ Shane~Chern}
\address{Department of Mathematics and Statistics, Dalhousie University, Halifax, NS, B3H 4R2, Canada.} \email{chenxiaohang92@gmail.com}
\author{Lin~Jiu$^1$}\footnote{corresponding author}
\address{Zu Chongzhi Center for Mathematics and Computational Sciences, Duke Kunshan University, Kunshan, Jiangsu Province, 215316, China. } 
\email{lin.jiu@dukekunshan.edu.cn}
 \author{Italo~Simonelli}
\address{Zu Chongzhi Center for Mathematics and Computational Sciences, Duke Kunshan University, Kunshan, Jiangsu Province, 215316, China.} 
\email{italo.simonelli@duke.edu}
    %% Type \and between all consecutive authors (not only before the last author).
%% Note: you may use \BEMAIL to force a line break before e-mail display.

%% Here is a compact example with two authors with same affiliation
%% \AUTHORS{%
%%  Michael~First\footnote{Some University. \EMAIL{mf,js@uni.edu}
%%  \and
%%  John~Second\footnotemark[2]}%AUTHORS
%% Note: The \footnotemark is the footnote number that you wish to reuse. Here
%% it is [2] (we took into account the footnote generated by \thanks in title).

%%%%%%%%%%%%%%%%%%%%%%%%%%%%%%%%%%%%%%%%%%%%%%%%%%%%%%%%%%%%%%%%%%%
%%                                                               %%
%% Please edit and customize the following items:                %%
%%                                                               %%
%%%%%%%%%%%%%%%%%%%%%%%%%%%%%%%%%%%%%%%%%%%%%%%%%%%%%%%%%%%%%%%%%%%

\keywords{central limit theorem; normal distribution; central moments; card shuffling; random permutation; asymptotic relation; Bell number; Abel summation formula} % Separate items with ;

\subjclass[2020]{60C05; 60F05; 05A16} % Edit. Separate items with ;
%\AMSSUBJSECONDARY{FIXME:} % Optional, separate items with ;

%%%%%%%%%%%%%%%%%%%%%%%%%%%%%%%%%%%%%%%%%%%%%%%%%%%%%%%%%%%%%%%%%%%
%%                                                               %%
%% Please uncomment and edit if you have an arXiv ID:            %%
%%                                                               %%
%%%%%%%%%%%%%%%%%%%%%%%%%%%%%%%%%%%%%%%%%%%%%%%%%%%%%%%%%%%%%%%%%%%

%\ARXIVID{NNNN.NNNNvn} % Edit.
%\HALID{hal-NNN} % Edit.

%%%%%%%%%%%%%%%%%%%%%%%%%%%%%%%%%%%%%%%%%%%%%%%%%%%%%%%%%%%%%%%%%%%
%%                                                               %%
%% The following items will be set by the Managing Editor.       %%
%%                                                               %%
%%%%%%%%%%%%%%%%%%%%%%%%%%%%%%%%%%%%%%%%%%%%%%%%%%%%%%%%%%%%%%%%%%%

%%%%%%%%%%%%%%%%%%%%%%%%%%%%%%%%%%%%%%%%%%%%%%%%%%%%%%%%%%%%%%%%%%%
%%                                                               %%
%% Please edit and customize the abstract:                       %%
%%                                                               %%
%%%%%%%%%%%%%%%%%%%%%%%%%%%%%%%%%%%%%%%%%%%%%%%%%%%%%%%%%%%%%%%%%%%

\begin{abstract}

Given a positive integer $n$, consider a random permutation $\tau$ of the set $\{1,2,\ldots, n\}$. In $\tau$, we look for sequences of consecutive integers that appear in adjacent positions: a maximal such a sequence is called a block. Each block in $\tau$ is merged, and after all the merges, the elements of this new set are relabeled from $1$ to the current number of elements. We continue to randomly permute and merge this new set until only one integer is left. In this paper, we investigate the asymptotic behavior of $X_n$, the number of permutations needed for this process to end. In particular, we find an explicit asymptotic expression for each of $\bbE[X_n]$ and $\Var [X_n]$ as well as for every higher central moment, and show that $X_n$ satisfies a central limit theorem.

\end{abstract}

\maketitle

\section{Introduction}

Given a positive integer $n$, consider a random permutation $\tau$ of the set $[n]:=\{1,2,\ldots, n\}$. In $\tau$, we look for sequences of consecutive integers that appear in adjacent positions, and a maximal such a sequence is called a \emph{block}. Each block in $\tau$ is merged into its first integer, and after all the merges the elements of this new set are relabeled from $1$ to the current number of elements. For example, if $n=10$, and $\tau = (1,7,5,6,8, 10, 9, 2, 3, 4)$, then the blocks are $(1)$, $(7)$, $(5,6)$, $(8)$, $(10)$, $(9)$, and $(2, 3, 4)$.  The merging gives $(1, 7, 5, 8, 10, 9, 2)$ and the relabeling will result in $(1,4,3,5,7,6,2)$, a permutation of the set $[7]= \{1, 2, 3, 4, 5, 6, 7\}$. We randomly permute this new set, and continue this merging and permuting until only one integer is left. The quantity of interest is $X_n$, the number of permutations needed for the process to end. 

This problem was introduced by Rao et al.~\cite{Rao} to model the number of times that catalysts must be added to $n$  molecules to bond into a single lump. The molecules have a given hierarchical order which led to the above mathematical formulation of the process. These authors proved that, for the \emph{mean value} of $X_n$, i.e., $\mu_n:=\bbE[X_n]$, the inequality   $n \leq \bbE[X_n] \leq n + \sqrt{n}$ holds. Hence, 
\begin{align}\label{asymp-mean}
    \bbE[X_n]\sim n.
\end{align}
However, they did not provide any explicit expression for $\bbE[X_n]$ or estimate for $\Var[X_n]$. Instead, they simulated the distribution of $X_n$ for $n=2,\ldots, 100$, and their calculations hinted that the asymptotic distribution of $X_n$ would tend to be normal.  

In this paper, we provide a solution to this problem in the affirmative, as follows.
%Let the  be
%\begin{align*}
%    \mu_n:=\bbE[X_n].
%\end{align*}
%We show in Theorem \ref{th:mu-new-formula} that
%\begin{align*}
%    \mu_n=  n + \mathcal{H}_{n-1} + \veps_n,
%\end{align*}
%where and $\veps_n$ has a finite limit as $n\to +\infty$. 
%Moreover, we show that as $n$ tends to infinity, the \emph{variance} of $X_n$
%\begin{align}
%    \Var[X_n] \sim n,
%\end{align}
%the \emph{third central moment} of $X_n$
%\begin{align}
%    \bbE\big[(X-\mu_n)^3\big] \sim 2n,
%\end{align}
%and the \emph{fourth central moment} of $X_n$
%\begin{align}
%    \bbE\big[(X-\mu_n)^4\big] \sim 3n^2,
%\end{align}
%Moreover, in Theorem \ref{th:c-mom}, we show that as $n$ tends to infinity, the \emph{variance} of $X_n$
%\begin{align*}
%    \Var[X_n] \sim n,
%\end{align*}
%and derive an asymptotic estimate for every higher central moment of $X_n$, 

\begin{theorem}\label{th:main}
Let $Z$ denote a standard normal random variable, i.e., $Z \sim \mathcal{N}(0,1)$. Then as $n\to +\infty$,
%\begin{align}\label{eq:main}
%    \frac{X_n -\mu_n}{\sqrt{\Var[X_n]}} \stackrel{w}{\longrightarrow} Z.
%\end{align}
\begin{align}\label{eq:main}
    \frac{X_n -n}{\sqrt{n}} \stackrel{w}{\longrightarrow} Z.
\end{align}
\end{theorem}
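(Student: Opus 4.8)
\emph{Reformulation as a Markov chain.} I would first view the process as a non-increasing Markov chain $(N_t)_{t\ge0}$ with $N_0=n$: given $N_t=m$, set $N_{t+1}=m-R_m$, where $R_m$ counts the \emph{successions} (indices $i$ with $\tau(i+1)=\tau(i)+1$) of a uniformly random permutation $\tau$ of $[m]$, drawn afresh at each step; then $X_n=\min\{t:N_t=1\}$. A succession at a prescribed position has probability $1/m$, and a gluing / inclusion--exclusion computation yields $\bbE[R_m]=1-\frac1m$, $\Var[R_m]=1-\frac1m-\frac1{m^2}$, and, more generally, falling-factorial moments $\bbE[(R_m)_k]\le1$; hence $\bbE[R_m^k]\le B_k$ (the $k$-th Bell number) and $\Pr[R_m\ge k]\le 1/k!$, all uniformly in $m$. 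In particular $\Pr[R_m=0]\to e^{-1}$, so each state $j$ is visited a number of times $V_j$ with $\bbE[V_j]=O(1)$ uniformly in $j$ and $n$.

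\emph{A telescoping identity and a martingale.} Put $\mathcal F_t=\sigma(N_0,\dots,N_t)$ and $M_t=(N_t-N_{t+1})-\bbE[N_t-N_{t+1}\mid\mathcal F_t]$, so that $S_k:=\sum_{t<k}M_t$ is an $\mathcal F$-martingale with $\bbE[M_t^2\mid\mathcal F_t]=1-\frac1{N_t}-\frac1{N_t^2}$ and $\bbE[M_t^4\mid\mathcal F_t]=O(1)$. Summing $\sum_{t<X_n}(N_t-N_{t+1})=n-1$ and using $\bbE[N_t-N_{t+1}\mid\mathcal F_t]=1-\frac1{N_t}$ gives the representation
\begin{equation*}
X_n=n-1+\sum_{t<X_n}\frac1{N_t}-S_{X_n},\qquad\text{where}\quad\sum_{t<X_n}\frac1{N_t}=\sum_{j=2}^{n}\frac{V_j}{j}.
\end{equation*}
In particular $\bbE\big[\sum_{t<X_n}1/N_t\big]=O(\log n)$, so this term is $o_P(\sqrt n)$, and taking expectations gives $\bbE[X_n]=n-1+O(\log n)$.

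\emph{Size of $X_n$ and the martingale CLT.} Since $\bbE[X_n]<\infty$ (as $\bbE[X_n]\le n+\sqrt n$) and $\langle S\rangle_{X_n}:=\sum_{t<X_n}\bbE[M_t^2\mid\mathcal F_t]\le X_n$, optional stopping gives $\bbE[S_{X_n}]=0$ and $\bbE[S_{X_n}^2]=\bbE[\langle S\rangle_{X_n}]=O(n)$; thus $S_{X_n}=O_P(\sqrt n)$, which fed back into the representation yields $X_n=n+O_P(\sqrt n)$ and then $\langle S\rangle_{X_n}=X_n-\sum_{t<X_n}(1/N_t+1/N_t^2)=n\,(1+o_P(1))$. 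The uniform fourth-moment bound gives the conditional Lyapunov condition $n^{-2}\sum_{t<X_n}\bbE[M_t^4\mid\mathcal F_t]=O_P(X_n/n^2)\to0$. Extending $M_t:=0$ for $t\ge X_n$ makes $S$ eventually constant, so $S_{X_n}=S_{\lceil2n\rceil}$ on the event $\{X_n\le2n\}$ (of probability $\to1$); the martingale central limit theorem of Hall and Heyde, applied to the array $(M^{(n)}_t)_{t\le\lceil2n\rceil}$ with its normalized predictable variation tending to $1$ and Lyapunov holding, gives $S_{X_n}/\sqrt n\xrightarrow{w}\mathcal N(0,1)$. By Slutsky, $(X_n-n)/\sqrt n=o_P(1)-S_{X_n}/\sqrt n\xrightarrow{w}-\mathcal N(0,1)\overset{d}{=}Z$, which is \eqref{eq:main}.

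\emph{Main obstacle and an alternative.} The delicate parts are the uniform-in-$m$ moment and tail estimates for $R_m$ (where the Bell numbers arise) and the bookkeeping required to run a martingale CLT at the \emph{random} time $X_n$ --- justifying optional stopping and the convergence $\langle S\rangle_{X_n}/n\to1$. An alternative route, which additionally produces the explicit asymptotics of every central moment advertised in the abstract, is the method of moments: conditioning on the first permutation gives the recursion $\bbE[X_n^r]=\sum_{k=0}^{n-1}p_{n,k}\sum_{j=0}^{r}\binom rj\bbE[X_{n-k}^j]$ with $p_{n,k}=\Pr[R_n=k]$, which one solves asymptotically --- Abel summation handling the resulting weighted sums --- to obtain $\bbE[(X_n-\mu_n)^{2\ell}]\sim(2\ell-1)!!\,n^{\ell}$ and $\bbE[(X_n-\mu_n)^{2\ell+1}]=o(n^{\ell+1/2})$, whence \eqref{eq:main}.
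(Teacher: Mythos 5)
Your martingale argument is correct in substance and takes a genuinely different route from the paper. The paper proves the CLT by the method of moments: it derives the recurrence \eqref{eq:mth-M} for every central moment, analyzes it via Abel summation and the generic recurrence machinery of Section \ref{sec:asymp-generic}, obtains $\bbE[(X_n-\mu_n)^{2M}]\sim(2M-1)!!\,n^M$ and $\bbE[(X_n-\mu_n)^{2M+1}]\sim\tfrac23M(2M+1)!!\,n^M$, and then invokes Chebyshev's method of moments. You instead run the process forward as a Markov chain, write $X_n$ as $n-1+\sum_{t<X_n}N_t^{-1}-S_{X_n}$ with $S$ a martingale whose per-step conditional variance is $1-N_t^{-1}-N_t^{-2}$, and apply a Hall--Heyde array CLT after truncating at time $2n$. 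The ingredients you need are all available: your claimed factorial moments are in fact $\bbE[(R_m)_k]=1-k/m\le1$, which is equivalent to the paper's identity \eqref{eq:A-(n-k)u} (the Bell numbers enter both arguments for the same reason, namely that the succession count is asymptotically Poisson with mean $1$); the uniform bound $\mathbf{P}(R_m=0)\le0.64$ needed for $\bbE[V_j]=O(1)$ is the paper's estimate \eqref{eq:Sn(n-1)-bound}; and your handling of the random stopping time (optional stopping with bounded increments and finite $\bbE[X_n]$, then freezing the martingale after $X_n$ so that the predictable variation of the length-$\lceil2n\rceil$ array converges in probability to $n$) is sound, provided you use $X_n=n+O_P(\sqrt n)$ rather than Markov's inequality alone to get $\mathbf{P}(X_n\le2n)\to1$, as you do. The trade-off is clear: your route reaches Theorem \ref{th:main} more directly and with softer estimates, but it yields only the mean, the variance, and the Gaussian limit, whereas the paper's heavier moment computation also delivers the explicit asymptotics of all higher central moments in Theorem \ref{th:c-mom}, which your closing sketch of the moment recursion would still have to reproduce in full to recover.
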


The proof of the above central limit theorem follows directly from the asymptotic estimate of each central moment of $X_n$. To present these asymptotic relations, we begin with some notation. First, the conventional \emph{Bachmann--Landau symbols} are to be used throughout this paper: 
\begin{itemize}[leftmargin=*,align=left,itemsep=6pt]
	\item 
    If there exists a constant $C$ such that $|f(n)|\le C g(n)$, we adopt the \emph{big-$O$ notation} and write $f(n)=O\big(g(n)\big)$;

    \item 
    If $\lim_{n\rightarrow+\infty} f(n)/g(n)=0$, we adopt the \emph{small-$o$ notation} and write $f(n)=o\big(g(n)\big)$;

    \item 
    If $\lim_{n\rightarrow+\infty} f(n)/g(n)=1$, we adopt the \emph{asymptotic equivalence symbol} and write $f(n)\sim g(n)$.
\end{itemize}
Also, the \emph{double factorials} are defined by 
\begin{align*}
    n!!:=n\cdot (n-2) \cdot (n-4) \cdots
\end{align*}
Now our main result can be stated as follows.

\begin{theorem}\label{th:c-mom}
	For every $m\ge 2$, we have, as $n\to +\infty$,
	\begin{align}\label{eq:mom-asymp}
		\bbE\big[(X_n-\mu_n)^m\big] = \begin{cases}
			(2M-1)!!\cdot n^M + O(n^{M-1}\log n), & \text{if $m=2M$},\\
			\tfrac{2}{3}M(2M+1)!!\cdot n^M + O(n^{M-1}\log n), & \text{if $m=2M+1$},
		\end{cases}
	\end{align}
%	In particular, if we define
%	\begin{align*}
%		\varepsilon_n^{(m)} := \bbE\big[(X_n-\mu_n)^m\big]- \begin{cases}
%			(2M-1)!!\cdot n^M, & \text{if $m=2M$},\\
%			\tfrac{2}{3}M(2M+1)!!\cdot n^M, & \text{if $m=2M+1$},
%		\end{cases}
%	\end{align*}
%	then
%	\begin{align}\label{eq:mom-error-diff}
%		\big|\veps^{(m)}_n-\veps^{(m)}_{n-1}\big| = \begin{cases}
%			O(n^{-1}), & \text{if $m=2$ or $3$},\\
%			O(n^{\lfloor\frac{m}{2}\rfloor-2}\log n), & \text{if $m\ge 4$}.
%		\end{cases}
%	\end{align}
	wherein the asymptotic relation depends only on $m$.
\end{theorem}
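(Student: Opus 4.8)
The plan is to realize $X_n$ as the first-passage time of a random walk whose increments, though not independent, are close to i.i.d.\ $\mathrm{Poisson}(1)$, and to read all the central moments off from that comparison.

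\emph{Setup and facts about the increments.} Let $N_0=n$, and after $t-1$ rounds of permuting-and-merging let $N_{t-1}$ be the current number of elements; the $t$-th round deletes $\beta_t$ of them, where, given $N_{t-1}=j$, the variable $\beta_t$ is distributed as the number of \emph{successions} in a uniform random permutation of $[j]$ (a succession being a place where some value is immediately followed by its successor, so that the number of blocks is $j$ minus the number of successions). Then $X_n=\inf\{t:\beta_1+\cdots+\beta_t=n-1\}$. Writing $B_j$ for the succession count of a uniform permutation of $[j]$, I would isolate: $\bbE[B_j]=1-1/j$ exactly (each of the $j-1$ candidate successions has probability $1/j$); $\Pr[B_j=k]=\binom{j-1}{k}\,Q_{j-k}/j!$ where $Q_m\sim m!/e$ is the number of succession-free permutations of $[m]$, whence $B_j\to\mathrm{Poisson}(1)$ in law with the rate $\Pr[B_j=k]=e^{-1}/k!+O_k(1/j)$; and, consequently, each moment of $B_j$ tends to $\bbE[\mathrm{Poisson}(1)^r]=\varpi_r$, the $r$-th Bell number, with error $O_r(1/j)$.

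\emph{Where the stated coefficients come from.} Consider the idealized process with i.i.d.\ increments $\tilde\beta_i\sim\mathrm{Poisson}(1)$ and first-passage time $\tilde X_n=\inf\{t:\tilde\beta_1+\cdots+\tilde\beta_t\ge n-1\}$. Because $\{\tilde X_n>t\}=\{\tilde\beta_1+\cdots+\tilde\beta_t\le n-2\}$ and the partial sum is $\mathrm{Poisson}(t)$, we have $\Pr[\tilde X_n>t]=\Pr[\mathrm{Poisson}(t)\le n-2]$ for integer $t\ge 0$; viewing $t$ as the clock of a rate-$1$ Poisson process identifies $\tilde X_n=\lceil\Gamma_{n-1}\rceil$ with $\Gamma_{n-1}\sim\mathrm{Gamma}(n-1,1)$ a sum of $n-1$ i.i.d.\ $\mathrm{Exponential}(1)$'s. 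Hence the central moments of $\tilde X_n$ equal those of $\Gamma_{n-1}$ up to an $O(1)$ rounding, and the latter are computed from $\kappa_r(\Gamma_{n-1})=(r-1)!\,(n-1)$ by expanding the $m$-th central moment over set partitions of $[m]$ with all blocks of size $\ge 2$: for $m=2M$ the dominant contribution comes from the $(2M-1)!!$ pair-partitions, each of weight $\kappa_2^{M}=(n-1)^M$, and for $m=2M+1$ from the $\binom{2M+1}{3}(2M-3)!!$ partitions into one triple and $M-1$ pairs, each of weight $\kappa_3\kappa_2^{M-1}=2(n-1)^M$; since $\binom{2M+1}{3}(2M-3)!!=\tfrac13 M(2M+1)!!$, these reproduce exactly the main terms of \eqref{eq:mom-asymp}. (One may instead extract the same asymptotics straight from $\Pr[\tilde X_n>t]=\Pr[\mathrm{Poisson}(t)\le n-2]$ using the Abel summation formula and an Edgeworth expansion of the Poisson law, which is where the Bell numbers reappear as Poisson moments.)

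\emph{From the idealized process to $X_n$, and the main obstacle.} It remains to show $\bbE[(X_n-\mu_n)^m]=\bbE[(\tilde X_n-\bbE\tilde X_n)^m]+O(n^{M-1}\log n)$. The route I would take is to set up exact recursions for $f_m(n):=\bbE[(X_n-\mu_n)^m]$ from the one-step identity $X_n\stackrel{d}{=}1+X_{n-B_n}$ (with the usual care when $B_n=0$, which makes the first step a geometric number of returns), insert the facts about $B_j$ above, and solve the recursions asymptotically by Abel summation. All the non-stationarity enters through the bias $\bbE[B_j]-1=-1/j$ and the $O_k(1/j)$ defect in the law of $B_j$, so its cumulative effect over the $\Theta(n)$ rounds is controlled by $\sum_{j\le n}1/j\asymp\log n$; this is what produces the $O(n^{M-1}\log n)$ error and also the fact that $\mu_n=n+O(\log n)$. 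The hard part is precisely this error analysis: proving that the non-stationarity, together with the boundary region of small $j$ where $B_j$ is genuinely far from $\mathrm{Poisson}(1)$, perturbs the $m$-th central moment only at the order $n^{M-1}\log n$, uniformly enough in the fixed exponent $m$. Granting Theorem~\ref{th:c-mom}, Theorem~\ref{th:main} is immediate by the method of moments: the even standardized moments tend to $(2M-1)!!$, the moments of $Z$, and the odd ones are $O(n^{-1/2})\to 0$.
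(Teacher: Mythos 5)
Your heuristic for the main-term coefficients is correct and genuinely illuminating: the Bell numbers that pervade the paper (e.g.\ Theorem \ref{th:A-(n-k)u}) are exactly the moments of $\mathrm{Poisson}(1)$, and your Gamma/first-passage identification $\tilde X_n\stackrel{d}{=}\lceil\Gamma_{n-1}\rceil$ together with the cumulant--partition count correctly reproduces both $(2M-1)!!$ and $\tfrac23 M(2M+1)!!$. Your proposed proof route --- exact one-step recursions for the central moments, solved asymptotically by Abel summation --- is also the paper's route. But the proposal has a genuine gap: it explicitly defers the entire error analysis (``the hard part is precisely this error analysis''), and that analysis is not a routine verification but the substance of the proof, occupying the paper's Sections 2--5 (the generic recurrence asymptotics, the refined formula $\mu_n=n+\mathcal{H}_{n-1}+\veps_n$, and the Bell-number weighted moment identities).

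The specific idea your plan is missing is the following. In the recursion \eqref{eq:mth-M} the dangerous term is $I_1^{(m)}=\sum_k \frac{A(n,k)}{n!}\,\bbE[(X_k-\mu_k)^{m-1}](1+\mu_k-\mu_n)$. Writing $\bbE[(X_k-\mu_k)^{m-1}]$ as its main term plus an error $\veps^{(m-1)}_k$, the error's contribution cannot be controlled by its size alone: already at $m=3$, the a priori bound $\veps^{(2)}_k=O(\log k)$ only gives $I^{(3)}_{1,2}=O(\log n)$, and feeding a forcing term of size $O(\log n)$ into the recurrence produces an error $O(n\log n)$ for the third central moment --- larger than the main term $2n$. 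The factor $(1+\mu_k-\mu_n)$ has weighted mean zero (see \eqref{eq:mu-moments-limit-01}), so the required gain of a full factor of $n$ comes from cancellation, extracted by summing by parts against the partial sums $R_n(t)$ of \eqref{eq:R-def}; this in turn requires bounding the \emph{increments} $|\veps^{(m-1)}_k-\veps^{(m-1)}_{k+1}|$, not merely $\veps^{(m-1)}_k$. That forces a strengthened induction hypothesis (the paper's \eqref{eq:mom-error-diff}) and a separate ``difference form'' of the generic recurrence asymptotics (Theorem \ref{th:xi-asymp-error}), neither of which your plan anticipates. A secondary caveat: the claim that the central moments of $\lceil\Gamma_{n-1}\rceil$ agree with those of $\Gamma_{n-1}$ ``up to $O(1)$ rounding'' is too quick for odd $m$, since the cross term involving the rounding defect is a priori of order $n^{M}$, the same as the main term; even the heuristic needs the asymptotic independence and uniformity of the fractional part.
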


\begin{proof}[Proof of Theorem \ref{th:main} from Theorem \ref{th:c-mom}]
    Directly, \eqref{eq:mom-asymp} indicates that 
        \begin{align}\label{eq:Var-asymp}
		\Var[X_n] = \bbE\big[(X-\mu_n)^2\big] &\sim n,
	\end{align}
    Therefore, for $Z_n := (X_n - \mu_n)/\sqrt{\Var[X_n]}$, we have $\bbE[Z_n]=0$, $\Var[Z_n]=1$, and more importantly, as $n\to +\infty$, 
    \begin{align*}
        \bbE\big[Z_n^m\big] =  \frac{\bbE\big[(X_n-\mu_n)^m\big]}{\Var[X_n]^{m/2}} \to 
        \begin{cases}
	   (m-1)!!, & \text{if $m$ is even},\\
	   0, & \text{if $m$ is odd},
        \end{cases}
    \end{align*}
    matching with the moments of the standard normal distribution $Z$. By recourse to \emph{Chebyshev's method of moments} \cite[p.~390, Theorem 30.2]{Bil1995}, the weak convergence of $Z_n \Rightarrow Z$ is clear. 
    Lastly, the asymptotic relations \eqref{asymp-mean} and \eqref{eq:Var-asymp} assert the validity of Theorem \ref{th:main}.
\end{proof}

As now the main task is to prove Theorem \ref{th:c-mom}, we shall organize the paper in the following way. In Section \ref{sec:A(n,k)}, we review the shuffling model in \cite{Rao} and derive a handful of properties of its associated probability sequence. Those properties will be utilized in Section \ref{sec:asymp-generic} for the asymptotic analysis in regard to a family of generic recurrences. % akin to that in \eqref{rec-1}. 
In Section \ref{sec:mean-value}, a more precise asymptotic estimate of $\mu_n$ is obtained; and in particular, Subsection \ref{sec:Bell} is devoted to a surprising connection between the mean values and the Bell numbers. Finally, in Section~\ref{sec:var-3moment}, we elaborate on Theorem \ref{th:c-mom} with a delicate estimate of the error terms, and prove this strengthening, as stated in Theorem \ref{th:c-mom-new}, by an inductive argument.
%We finally prove the asymptotics for the variance as well as the higher moments of $X_n$, which completes the proof of Theorem \ref{th:c-mom}, in Section~\ref{sec:var-3moment}.

\subsection*{Convention}

\begin{itemize}[leftmargin=*,align=left,itemsep=6pt]
    \item Throughout the rest of the paper, all limits and asymptotic relations are to be interpreted with respect to $n$, as $n\to +\infty$.
    \item In this work we may occasionally encounter terms of the form $0^0$ (e.g., the summand $(n-k)^\ell$ in \eqref{eq:A-(n-k)u} when $k=n$ and $\ell=0$). In such scenarios, we simply treat $0^0$ as $1$.
\end{itemize}

\section{The shuffling model and probability sequence}\label{sec:A(n,k)}
\subsection{The shuflling model}
To formulate the model in this work, we recall that by denoting $Y_n$ the number of blocks in a random permutation of $[n]$, it is well-known (cf.~\cite[p.~615, eq.~(1)]{Rao}) that for $k=1, 2, \ldots, n$, the following probability evaluation holds:
$$\mathbf{P}(Y_n = k) = \frac{A(n,k)}{n!}, $$ 
where the sequence $A(n,k)$ is defined for $1\leq k \leq n$ by
$$ A(n,k) := \binom{n-1}{k-1} A(k-1),$$
with the sequence $A(n)$ being given by 
\begin{align*}
    A(n) := \frac{(n+2)!}{n+1}\sum_{i=0}^{n+2}\frac{(-1)^i}{i!}.
\end{align*}
Hence, summing $\mathbf{P}(Y_n = k)$ over $k$ gives
\begin{equation}\label{sum=1}
\sum_{k=1}^n \frac{A(n,k)}{n!} =1.
\end{equation}
Meanwhile, for $1 \leq k \leq n$, let $\widetilde{X}_k$ be independent random variables such that $\widetilde{X}_k \stackrel{d}{=}X_k$. That is, $\widetilde{X}_k$ is the number of permutations needed for the shuffling process to end with the initial state being a permutation of the set $[k]$. A crucial relation shown by Rao et al.~is \cite[p.~616, eq.~(4)]{Rao}:
\begin{align*}
    X_n = \begin{cases}
        1, & \text{with probability $\mathbf{P}(Y_n = 1)$},\\
        1+\widetilde{X}_k, & \text{with probability $\mathbf{P}(Y_n = k)$ for $2\le k\le n$}.
    \end{cases}
\end{align*}
With this, it was shown in \cite{Rao} that the mean value of $X_n$, that is, $\mu_n=\bbE[X_n]$, satisfies the recurrence relation:
\begin{align*} \mu_n &  =   \bbE\big[\bbE[X_n\,|\,Y_n]\big]\\
&=   \mathbf{P}(Y_n=1) + \sum_{k=2}^n \mathbf{P}(Y_n=k) \bbE\big[1+ \widetilde{X}_k\big]\\
& = \frac{A(n,1)}{n!}+\sum_{k=2}^{n} \frac{A(n,k)}{n!} (1+\mu_k)\\
& = 1 + \sum_{k=2}^{n} \frac{A(n,k)}{n!} \mu_k,
\end{align*}
where we have applied \eqref{sum=1}. Now to facilitate our analysis, we adopt the convention that $X_{1}=0$, a definite constant, so that the mean value $\mu_{1}=\bbE[X_{1}]=0$, as there is no more shuffling needed. Then the above recurrence for $\mu_n$ becomes
\begin{equation} \label{rec-1}
\mu_n = 1 + \sum_{k=1}^{n} \frac{A(n,k)}{n!} \mu_k,
\end{equation}
with initial values $\mu_1=0$ and $\mu_2=2$.

In general, if we assume that $p(x)$ is an arbitrary polynomial in $x$, then
\begin{align}\label{eq:E-poly}
    \bbE\big[p(X_n)\big] = \sum_{k=1}^n \mathbf{P}(Y_n=k)\bbE\big[p(1+X_k)\big] = \sum_{k=1}^n \frac{A(n,k)}{n!}\cdot \bbE\big[p(1+X_k)\big].
\end{align}
%By the linearity of expectation, we have
%\begin{align}\label{eq:E-poly}
%    \bbE\big[p(X_n)\big] = \bbE\left[\sum_{k=1}^{n}\frac{A(n,k)}{n!}p(1+X_{k})\right].
%\end{align}
This relation will serve as a key in our subsequent evaluation of the central moments of $X_n$.

%In general, to evaluate higher moments of $X_n$, a key tool is the \emph{moment-generating function} $\bbE\big[e^{sX_n}\big]$, and we shall see that a somehow similar recurrence exists.

%\begin{theorem}
%   Let $\mathcal{F}_{n}(s):=\bbE\big[e^{sX_n}\big]$ be the moment generating function of $X_{n}$. Then, 
%   \begin{equation}\label{eq:MGFRec}
%       \left(1-e^{s}\frac{A(n,n)}{n!}\right)\mathcal{F}_{n}(s)=e^{s}\sum_{k=1}^{n-1}\frac{A(n,k)}{n!}\mathcal{F}_{k}(s).
%   \end{equation}
%\end{theorem}

%\begin{proof}
%   Note that
%   \begin{align*}
%        \mathcal{F}_n(s) &= \mathbf{P}(Y_n=1) \bbE\big[e^s\big] + \sum_{k=2}^n  \mathbf{P}(Y_n=k) \bbE\left[e^{s(1+ \widetilde{X}_k)}\right]\\
%        & =  \frac{A(n,1)}{n!}e^s + \sum_{k=2}^n \frac{A(n,k)}{n!} \bbE\left[e^{s(1+ \widetilde{X}_k)}\right]\\
%        & = e^s\left(\frac{1}{n!}+ \sum_{k=2}^{n} \frac{A(n,k)}{n!} \mathcal{F}_k(s)\right). 
%    \end{align*}
%    In addition, since $X_1=0$, the moment generating function of $X_1$ can be considered as a constant function $\mathcal{F}_1(s)\equiv 1$. The required relation then follows by a reformulation.
%\end{proof}

Here, we offer a historical remark on the sequence $A(n)$, which is registered as sequence \textsf{A000255} in the On-Line Encyclopedia of Integer Sequences \cite{OEIS}. This sequence was first introduced by Euler \cite[p.~235, Sect.~152]{Eul1782} in his study of Latin squares, and Euler interpreted that this sequence enumerates ``fixed-point-free permutations beginning with $2$.'' For modern studies of the numbers $A(n)$ and $A(n,k)$, see, for example, Kreweras \cite{Kreweras} and Myers \cite{Myers}. %In the next subsection, we continue the investigation of these sequences, and show their connection with the Bell numbers.

\subsection{Properties of the probability sequence}

In this subsection, we take a deeper look at the double sequence $A(n,k)$. To facilitate our investigation of $A(n,k)$, an explicit formula for its generating function becomes much needed. We start by noting that the following exponential generating function of the sequence $A(n)$ was already derived in \cite{Kreweras}. However, for the sake of completeness, we sketch a proof.

\begin{lemma}
    We have
    \begin{equation} \label{eq:A-egf}
        \sum_{n=0}^{\infty} \frac{A(n)}{n!}x^n = \frac{e^{-x}}{(1-x)^2}.
    \end{equation}
\end{lemma}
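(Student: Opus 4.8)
The plan is to rewrite the coefficients into a shape where one Cauchy product and one differentiation finish the job. First I would simplify the coefficient: since $\dfrac{(n+2)!}{(n+1)\,n!}=n+2$, the defining formula for $A(n)$ gives
\begin{align*}
\frac{A(n)}{n!}=(n+2)\sum_{i=0}^{n+2}\frac{(-1)^i}{i!}.
\end{align*}
Writing $b_m:=\sum_{i=0}^{m}\frac{(-1)^i}{i!}$ for the $m$-th partial sum, this reads $A(n)/n!=(n+2)\,b_{n+2}$, so after the index shift $m=n+2$,
\begin{align*}
\sum_{n=0}^{\infty}\frac{A(n)}{n!}x^n=\frac{1}{x^2}\sum_{m=2}^{\infty} m\,b_m\,x^m .
\end{align*}

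Next I would observe that the $m=0$ and $m=1$ terms of $\sum_{m\ge 0}m\,b_m\,x^m$ both vanish — the first because of the factor $m$, the second because $b_1=1-1=0$ — so the sum over $m\ge 2$ may be extended to all $m\ge 0$, whence
\begin{align*}
\sum_{n=0}^{\infty}\frac{A(n)}{n!}x^n=\frac{1}{x}\cdot\frac{d}{dx}\left(\sum_{m=0}^{\infty} b_m\,x^m\right).
\end{align*}
The inner generating function is a Cauchy product: because $b_m$ is the $m$-th partial sum of $\sum_i\frac{(-1)^i}{i!}$, we have $\sum_{m\ge 0}b_m\,x^m=\bigl(\sum_{i\ge 0}\frac{(-1)^i}{i!}x^i\bigr)\bigl(\sum_{j\ge 0}x^j\bigr)=\dfrac{e^{-x}}{1-x}$, valid for $|x|<1$.

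Finally I would compute $\dfrac{d}{dx}\dfrac{e^{-x}}{1-x}=\dfrac{-e^{-x}(1-x)+e^{-x}}{(1-x)^2}=\dfrac{x\,e^{-x}}{(1-x)^2}$, and dividing by $x$ leaves $\dfrac{e^{-x}}{(1-x)^2}$, as claimed. I do not expect a genuine obstacle here; the only points needing a little care are the bookkeeping of the two low-order terms after the $1/x^2$ shift (which is exactly why flagging $b_1=0$ matters) and keeping all manipulations inside the common disc of convergence $|x|<1$. As an alternative route, one may note that $b_{n+2}=D_{n+2}/(n+2)!$ with $D_m$ the number of derangements, deduce $A(n)=D_{n+1}+D_n$ from the recurrence $D_m=(m-1)(D_{m-1}+D_{m-2})$, and combine the classical identity $\sum_{m\ge 0}D_m x^m/m!=e^{-x}/(1-x)$ with a single differentiation; but the Cauchy-product argument above is more self-contained.
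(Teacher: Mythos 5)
Your proof is correct, and it takes a genuinely different route from the paper's. The paper does not work from the explicit formula at all: it quotes the recurrence $A(n)=nA(n-1)+(n-1)A(n-2)$ with $A(0)=A(1)=1$ from the literature, translates it into the first-order linear ODE $(1-x)F'(x)=(1+x)F(x)$ for the exponential generating function $F$, and solves that ODE with $F(0)=1$. You instead start from the definition $A(n)=\frac{(n+2)!}{n+1}\sum_{i=0}^{n+2}\frac{(-1)^i}{i!}$ given in the paper, simplify the coefficient to $(n+2)b_{n+2}$, recognize the partial sums $b_m$ as a Cauchy product with the geometric series (giving $e^{-x}/(1-x)$), and finish with one differentiation and the index-shift bookkeeping; your checks that the $m=0,1$ terms vanish (in particular $b_1=0$) are exactly the points that need care, and they are handled correctly. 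Your argument buys self-containedness — it uses only the definition stated in the paper, whereas the paper's proof leans on an externally cited recurrence — and it avoids solving a differential equation; the paper's route, in exchange, records the recurrence for $A(n)$, which is of independent combinatorial interest. Your closing remark via derangements ($A(n)=D_{n+1}+D_n$) is also a valid third route. No gaps.
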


\begin{proof}
    It is known from \cite[p.~178, eq.~(5.9)]{Cha2002} that $A(n)$ satisfies the recurrence relation:
    \begin{align}\label{eq:A-rec}
        A(n) = n A(n-1)+(n-1) A(n-2),
    \end{align}
    with initial values $A(0) = A(1) =1$. Let 
	\begin{align*}
		F(x):= \sum_{n=0}^\infty A(n)\frac{x^n}{n!}.
	\end{align*}
	Now we multiply by $\frac{x^n}{n!}$ on both sides of \eqref{eq:A-rec} and then sum over $n\ge 2$. Thus,
	\begin{align*}
		\sum_{n=2}^\infty A(n) \frac{x^n}{n!} = \sum_{n=2}^\infty A(n-1) \frac{x^n}{(n-1)!} + \sum_{n=2}^\infty A(n-2) \frac{x^n}{(n-2)!}\cdot\frac{1}{n},
	\end{align*}
	namely,
	\begin{align*}
		F(x)-\big(1+x\big) = x \big(F(x)-1\big) + \sum_{n=2}^\infty A(n-2) \frac{x^n}{(n-2)!}\cdot \frac{1}{n}.
	\end{align*}
	Differentiating both sides with respect to $x$ then gives
	\begin{align*}
		F'(x) - 1 = F(x)-1 + xF'(x) + xF(x).
	\end{align*}
	Solving the above PDE under the boundary condition that $F(0)=1$, we arrive at the claimed relation.
\end{proof}

Now we are ready to provide a bivariate generating function identity for $A(n,k)$.

\begin{theorem}
	We have
	\begin{align}\label{eq:A-bivariate-gf}
		\sum_{n=1}^\infty \left(\sum_{k=1}^n \frac{A(n,k)}{n!}z^k\right) nx^n = \frac{xze^{x(1-z)}}{(1-xz)^2}.
	\end{align}
\end{theorem}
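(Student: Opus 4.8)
The plan is to start from the definitions $A(n,k)=\binom{n-1}{k-1}A(k-1)$ and $\sum_{n\ge 0}A(n)x^n/n! = e^{-x}/(1-x)^2$ (the lemma just proved), and to assemble the bivariate generating function by recognizing the inner sum $\sum_{k=1}^n \frac{A(n,k)}{n!}z^k$ as a binomial convolution. Concretely, I would write
\begin{align*}
    \frac{A(n,k)}{n!}z^k = \frac{1}{n!}\binom{n-1}{k-1}A(k-1)z^k = \frac{z}{n}\cdot\frac{1}{(k-1)!}\cdot\frac{A(k-1)}{1}\cdot\frac{z^{k-1}}{(n-k)!},
\end{align*}
so that, after multiplying the whole inner sum by $nx^n$, the factor $n$ cancels the $1/n$ and we are left with
\begin{align*}
    \sum_{k=1}^n \frac{A(n,k)}{n!}z^k\cdot nx^n = z\sum_{k=1}^n \frac{A(k-1)z^{k-1}x^{k-1}}{(k-1)!}\cdot\frac{x^{n-k+1}}{(n-k)!}.
\end{align*}
This is exactly the coefficient extraction of a product of two exponential generating functions evaluated appropriately.

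Next I would sum over $n\ge 1$ and swap the order of summation. Setting $j=k-1\ge 0$ and $i=n-k\ge 0$, the double sum factors as
\begin{align*}
    \sum_{n=1}^\infty \left(\sum_{k=1}^n \frac{A(n,k)}{n!}z^k\right)nx^n
    = z\left(\sum_{j=0}^\infty \frac{A(j)}{j!}(xz)^j\right)\cdot x\left(\sum_{i=0}^\infty \frac{x^i}{i!}\right)
    = xz\cdot\frac{e^{-xz}}{(1-xz)^2}\cdot e^{x},
\end{align*}
where the first factor is the lemma's generating function evaluated at $xz$ and the second is $xe^x$. Collecting the exponentials gives $xz\,e^{x(1-z)}/(1-xz)^2$, which is the claimed right-hand side. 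The justification of the interchange of summation is routine since everything converges absolutely for $|x|$, $|xz|$ small, so the identity holds as formal power series (and analytically near the origin).

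The only genuine subtlety — and the step I would be most careful about — is the bookkeeping around the $n=k$ term and the convention $0^0=1$: when $k=n$ the factor $x^{n-k}/(n-k)!$ is $x^0/0!=1$, consistent with the stated convention, and when $k=1$ the factor $A(k-1)=A(0)=1$ is well defined, so no boundary terms are lost. I expect no serious obstacle beyond verifying that the index shifts $j=k-1$, $i=n-k$ correctly range over all of $\mathbb{Z}_{\ge 0}^2$ as $(n,k)$ ranges over $1\le k\le n$, which they do bijectively; once that is in place the factorization into the two known generating functions is immediate.
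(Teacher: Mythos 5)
Your proposal is correct and is essentially the paper's own proof run in the opposite direction: the paper starts from the right-hand side, factors it as $e^x\cdot\frac{xze^{-xz}}{(1-xz)^2}$, and expands the Cauchy product, whereas you start from the left-hand side and factor the double sum into the same two series via the substitution $j=k-1$, $i=n-k$. The computations and the key ingredient (the exponential generating function of $A(n)$) are identical.
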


\begin{proof}
	We make use of \eqref{eq:A-egf} to get
	\begin{align*}
		\frac{xze^{x(1-z)}}{(1-xz)^2} &= e^x\cdot \frac{xze^{-xz}}{(1-xz)^2}\\
		&=\left(\sum_{j=0}^\infty \frac{x^j}{j!}\right) \left(\sum_{\ell=0}^\infty \frac{A(\ell)}{\ell!} x^{\ell+1}z^{\ell+1}\right)\\
        &= \sum_{n=1}^\infty \left(\sum_{k=1}^n \frac{1}{(n-k)!} \frac{A(k-1)}{(k-1)!} z^k\right) x^n\\
		&= \sum_{n=1}^\infty \left(\sum_{k=1}^n \binom{n-1}{k-1} \frac{A(k-1)}{n!} z^k\right) nx^n,
	\end{align*}
    wherein the third equality follows by performing the Cauchy product with respect to $x$. Finally, the claimed relation holds by recalling that $A(n,k) = \binom{n-1}{k-1}A(k-1)$.
\end{proof}

In \eqref{eq:A-bivariate-gf}, by expanding $e^{x(1-z)}$ and $xz/(1-xz)^2$ as a series in $x$, respectively, the following relation is clear.

\begin{corollary}\label{coro:A-z-gf}
	For $n\ge 1$,
	\begin{align}\label{eq:A-z-gf}
		\sum_{k=1}^n \frac{A(n,k)}{n!}z^k = \sum_{m=0}^{n-1}\frac{n-m}{n\cdot m!}z^{n-m}(1-z)^m.
	\end{align}
\end{corollary}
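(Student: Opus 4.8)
The plan is to read off Corollary~\ref{coro:A-z-gf} directly from the bivariate generating function identity \eqref{eq:A-bivariate-gf}, by expanding the right-hand side as a power series in $x$ and matching the coefficient of $x^n$ on both sides. Since the corollary is phrased as ``the following relation is clear'' from such an expansion, the task is really to carry out that expansion cleanly, so the proof should be short.

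First I would rewrite the right-hand side of \eqref{eq:A-bivariate-gf} as a product of two series in $x$. Split it as
\begin{align*}
    \frac{xze^{x(1-z)}}{(1-xz)^2} = \left(\sum_{\ell=0}^\infty \frac{(1-z)^\ell}{\ell!}x^\ell\right)\left(\frac{xz}{(1-xz)^2}\right),
\end{align*}
and then expand the second factor. Using the standard identity $\dfrac{t}{(1-t)^2} = \sum_{j\ge 1} j\, t^{j}$ with $t = xz$, one gets
\begin{align*}
    \frac{xz}{(1-xz)^2} = \sum_{j=1}^\infty j\, z^j x^j.
\end{align*}
Multiplying the two series and collecting the coefficient of $x^n$ (with $n = \ell + j$, $j\ge 1$, so $\ell = n - j$ ranges over $0,\ldots,n-1$) yields
\begin{align*}
    [x^n]\,\frac{xze^{x(1-z)}}{(1-xz)^2} = \sum_{j=1}^{n} j\, z^{j}\,\frac{(1-z)^{n-j}}{(n-j)!}.
\end{align*}
On the left-hand side of \eqref{eq:A-bivariate-gf}, the coefficient of $x^n$ is $n\sum_{k=1}^n \frac{A(n,k)}{n!}z^k$. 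Equating the two and dividing by $n$ gives
\begin{align*}
    \sum_{k=1}^n \frac{A(n,k)}{n!}z^k = \frac{1}{n}\sum_{j=1}^{n} \frac{j}{(n-j)!}\,z^{j}(1-z)^{n-j}.
\end{align*}
Finally I would reindex by setting $m = n - j$, so $j = n - m$ and $m$ runs from $0$ to $n-1$, which turns the right-hand side into $\sum_{m=0}^{n-1}\frac{n-m}{n\cdot m!}z^{n-m}(1-z)^m$, exactly \eqref{eq:A-z-gf}.

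There is essentially no obstacle here; the only points requiring a little care are the bookkeeping of the index ranges (the factor $xz$ shifts the lowest power, forcing $j\ge 1$ and hence $m\le n-1$) and invoking the elementary series $\sum_{j\ge1} j t^j = t/(1-t)^2$ for the repeated pole at $xz=1$. One could alternatively avoid the closed form for $xz/(1-xz)^2$ and instead differentiate $1/(1-xz)$, but expanding directly is cleaner. The identity is then immediate by comparing coefficients, so the ``clear'' in the statement is justified.
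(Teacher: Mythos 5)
Your proposal is correct and is exactly the route the paper intends: expand $e^{x(1-z)}$ and $xz/(1-xz)^2=\sum_{j\ge 1}jz^jx^j$ as series in $x$, extract the coefficient of $x^n$ from \eqref{eq:A-bivariate-gf}, divide by $n$, and reindex with $m=n-j$. The bookkeeping of the index ranges is handled correctly, so nothing is missing.
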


\begin{remark}
    Taking $z=1$ in \eqref{eq:A-z-gf} gives an alternative proof of \eqref{sum=1}.
\end{remark}

We may further deduce from \eqref{eq:A-z-gf} the following evaluation:

\begin{theorem}\label{th:A-(n-k)u}
	Let $\ell$ be a nonnegative integer. Then for $n\ge \ell$,
	\begin{align}\label{eq:A-(n-k)u}
		\sum_{k=1}^n \frac{A(n,k)}{n!} (n-k)^\ell = B_\ell - (B_{\ell+1}-B_\ell)\cdot \frac{1}{n},
	\end{align}
	where $B_\ell$ is the $\ell$-th Bell number defined by the exponential generating function
	\begin{align*}
		\sum_{\ell= 0}^\infty B_\ell \frac{x^\ell}{\ell!} := e^{e^x-1}.
	\end{align*}
\end{theorem}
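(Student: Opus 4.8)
The plan is to start from the generating-function identity \eqref{eq:A-z-gf} in Corollary \ref{coro:A-z-gf} and extract the quantity $\sum_{k=1}^n \frac{A(n,k)}{n!}(n-k)^\ell$ by applying the differential operator $(z\,d/dz)^\ell$ and then setting $z=1$; more precisely, since $(n-k)$ is the exponent deficit, the cleanest route is to substitute $z \mapsto e^{-t}$ so that $z^{n-k} = e^{-(n-k)t}$, expand in powers of $t$, and read off the coefficient of $t^\ell/\ell!$. Under $z = e^{-t}$ the right-hand side of \eqref{eq:A-z-gf} becomes $\sum_{m=0}^{n-1} \frac{n-m}{n\cdot m!} e^{-(n-m)t}(1-e^{-t})^m$. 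The factor $e^{-(n-m)t}$ is an unwanted nuisance; I would therefore instead write the left side using the substitution directly as $\sum_{k=1}^n \frac{A(n,k)}{n!} e^{-(n-k)t}$ and match coefficients of $t^\ell$.

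First I would simplify the right-hand side. Write $n-m = n-(n-m')$ — actually the slicker bookkeeping is to split $\frac{n-m}{n} = 1 - \frac{m}{n}$, giving two sums:
\begin{align*}
\sum_{k=1}^n \frac{A(n,k)}{n!}z^{k} = \sum_{m=0}^{n-1}\frac{z^{n-m}(1-z)^m}{m!} - \frac{1}{n}\sum_{m=0}^{n-1}\frac{m\,z^{n-m}(1-z)^m}{m!}.
\end{align*}
In the first sum, $z^{n-m}(1-z)^m = z^n\big((1-z)/z\big)^m$, and summing $\sum_{m=0}^{n-1}\frac{1}{m!}\big((1-z)/z\big)^m$ is a truncation of $\exp\!\big((1-z)/z\big)$; the second sum is similarly a truncated derivative. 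The point is that for the purpose of extracting the coefficient of $t^\ell$ with $\ell \le n$, the truncation is invisible: under $z = e^{-t}$ one has $(1-z)/z = e^{t}-1$, whose expansion in $t$ begins at order $1$, so $\big((1-z)/z\big)^m$ contributes only to orders $t^m$ and higher. Hence for $\ell \le n-1$ (and the boundary case $\ell=n$ handled separately by a direct check) the truncated sums agree with the full series $\exp(e^{t}-1) = \sum_\ell B_\ell t^\ell/\ell!$ and its derivative modulo $t^{\ell+1}$.

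Putting this together, after the substitution the identity reads, modulo $t^{n}$,
\begin{align*}
\sum_{k=1}^n \frac{A(n,k)}{n!} e^{-(n-k)t} \;\equiv\; e^{e^{t}-1} - \frac{1}{n}\cdot \frac{d}{dt}\Big(e^{e^{t}-1}\Big)\cdot\frac{?}{?},
\end{align*}
where I will need to track carefully how the operator $m \mapsto m$ inside the sum translates: $\sum_m \frac{m}{m!}w^m = w\frac{d}{dw}e^{w} = w e^{w}$ with $w = e^{t}-1$, so the correction term is $\frac{1}{n}(e^t-1)e^{e^t-1}$. Now $\frac{d}{dt}e^{e^t-1} = e^t e^{e^t-1}$, and the generating identity $\sum_\ell B_{\ell+1}t^\ell/\ell! = \frac{d}{dt}e^{e^t-1} = e^t e^{e^t-1}$ together with $e^t e^{e^t-1} - e^{e^t-1} = (e^t-1)e^{e^t-1} = \sum_\ell (B_{\ell+1}-B_\ell)t^\ell/\ell!$ is exactly what produces the claimed $B_\ell - \frac1n(B_{\ell+1}-B_\ell)$ upon comparing coefficients of $t^\ell/\ell!$. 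The main obstacle is purely bookkeeping: making the truncation argument rigorous (confirming the error from replacing truncated exponential series by full ones is $O(t^{n})$, hence does not affect the coefficient of $t^\ell$ as long as $\ell < n$, and separately verifying the edge case $\ell = n$ or absorbing it into the hypothesis $n \ge \ell$), and being careful that the coefficient extraction on the left genuinely yields $\sum_k \frac{A(n,k)}{n!}(n-k)^\ell$ with the $0^0=1$ convention when $k=n,\ell=0$. None of this is deep, but the $1/n$ correction term must be handled without sign or factorial slips.
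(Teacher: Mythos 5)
Your proposal is correct and follows essentially the same route as the paper: both start from Corollary \ref{coro:A-z-gf}, pass to the ``reflected'' variable so that the exponent becomes $n-k$, and extract the $\ell$-th moment at $z=1$, with the key observation in each case being that the truncation at $m\le n-1$ is invisible for $\ell\le n$ (in the boundary case $\ell=n$ the missing $m=n$ term carries the factor $\tfrac{n-m}{n}=0$, which settles the edge case you flagged). The only difference is presentational: the paper applies $(z\tfrac{d}{dz})^\ell$ and lands on Stirling numbers of the second kind plus Dobi\'nski's formula, whereas your substitution $z=e^{-t}$ reads the Taylor coefficients of $e^{e^t-1}$ and $(e^t-1)e^{e^t-1}$ directly from the Bell EGF --- the same computation in different clothing, and your correction term $\tfrac1n(e^t-1)e^{e^t-1}$ is exactly right.
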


%\begin{remark}
%    The $\ell=1$ specialization of \eqref{eq:A-(n-k)u} reduces to \eqref{A(n,k)-pr1-a}.
%\end{remark}

\begin{proof}
	The $\ell=0$ case is exactly \eqref{sum=1}. Throughout, we assume that $\ell\ge 1$. In light of \eqref{eq:A-z-gf}, it is plain that
	\begin{align*}
		\sum_{k=1}^n \frac{A(n,k)}{n!}z^{n-k} = \sum_{m=0}^{n-1}\frac{n-m}{n\cdot m!}(z-1)^m.
	\end{align*}
	Thus,
	\begin{align*}
		\sum_{k=1}^n \frac{A(n,k)}{n!} (n-k)^\ell = \sum_{m=0}^{n-1}\frac{n-m}{n\cdot m!} \left[\left(z\frac{d}{dz}\right)^\ell (z-1)^m\right]_{z=1},
	\end{align*}
	where $(z\frac{d}{dz})^\ell$ means applying $\ell$ times the operator $z\frac{d}{dz}$. Note that when $m>\ell$, we always have a factor $(z-1)$ in $(z\frac{d}{dz})^\ell (z-1)^m$, thereby implying that this quantity vanishes when taking $z=1$. Hence,
	\begin{align*}
		\sum_{k=1}^n \frac{A(n,k)}{n!} (n-k)^\ell &= \sum_{m=0}^{\min\{n-1,\ell\}}\frac{n-m}{n\cdot m!} \left[\left(z\frac{d}{dz}\right)^\ell (z-1)^m\right]_{z=1}\\
		&= \sum_{m=0}^{\min\{n-1,\ell\}}\frac{n-m}{n\cdot m!} \left[\left(z\frac{d}{dz}\right)^\ell \sum_{i=0}^m \binom{m}{i}(-1)^{m-i}z^i\right]_{z=1}\\
		&= \sum_{m=0}^{\min\{n-1,\ell\}}\frac{n-m}{n\cdot m!} \sum_{i=0}^m \binom{m}{i}(-1)^{m-i} i^\ell\\
		&= \sum_{m=0}^{\min\{n-1,\ell\}}\frac{n-m}{n} {\ell \brace m},
	\end{align*}
	where ${\ell\brace m}$ are the \emph{Stirling numbers of the second kind}. Therefore, for $n\ge \ell$,
	\begin{align}\label{eq:A-(n-k)u-Stirling}
		\sum_{k=1}^n \frac{A(n,k)}{n!} (n-k)^\ell = \sum_{m=0}^{\ell}\frac{n-m}{n} {\ell\brace m}.
	\end{align}
	Now it is known from \emph{Dobi\'nski's formula} \cite[p.~210, eq.~(4a)]{Com1974}, with ${\ell\brace 0} = 0$ for positive integers $\ell$ in mind, that 
	\begin{align}\label{eq:Dob}
		\sum_{m=0}^{\ell} {\ell\brace m} = B_\ell.
	\end{align}
	Meanwhile, we recall the following recurrence for the Stirling numbers of the second kind \cite[p.~20, eq.~(1.34)]{Wil2006}:
	\begin{align*}
		m{\ell\brace m} = {\ell+1\brace m}-{\ell\brace m-1},
	\end{align*}
	with the convention that ${\ell\brace m} = 0$ whenever $m<0$. Hence, by also recalling that ${\ell+1\brace \ell+1} ={\ell\brace \ell} = 1$, we have
	\begin{align}\label{eq:Dob*m}
		\sum_{m=0}^{\ell} m{\ell\brace m} = \sum_{m=0}^{\ell+1} {\ell+1\brace m} - \sum_{m=0}^{\ell} {\ell\brace m} = B_{\ell+1}-B_{\ell},
	\end{align}
	where Dobi\'nski's formula is applied again. Finally, the desired relation \eqref{eq:A-(n-k)u} follows by substituting \eqref{eq:Dob} and \eqref{eq:Dob*m} into \eqref{eq:A-(n-k)u-Stirling}.
\end{proof}

The identity \eqref{eq:A-(n-k)u} gives us the following estimate:

\begin{corollary}\label{coro:Q-s-bound}
	Let $\ell$ be a nonnegative integer and let $s$ be an arbitrary real number. Then as $n\to +\infty$,
	\begin{align}\label{eq:Q-s-bound}
		\sum_{k=1}^n \frac{A(n,k)}{n!}(n-k)^\ell\, k^s = O(n^{s}).
	\end{align}
	Here the asymptotic relation depends only on $\ell$ and $s$.
\end{corollary}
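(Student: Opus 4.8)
The plan is to deduce the bound from the $k$-free identity of Theorem~\ref{th:A-(n-k)u}, which already tells us that $\sum_{k=1}^n \frac{A(n,k)}{n!}(n-k)^j = B_j - (B_{j+1}-B_j)/n = O(1)$ for every fixed nonnegative integer $j$, with an implied constant depending only on $j$.

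When $s\ge 0$ there is nothing to do: since $1\le k\le n$ we have $k^s\le n^s$, hence
\[
\sum_{k=1}^n \frac{A(n,k)}{n!}(n-k)^\ell k^s \;\le\; n^s\sum_{k=1}^n \frac{A(n,k)}{n!}(n-k)^\ell \;=\; O(n^s).
\]
The case $s<0$ is where the one genuine idea enters. Writing $t:=-s>0$, I would use the elementary inequality
\[
\frac{n}{k}\;\le\; n-k+1 \qquad (1\le k\le n),
\]
which is valid because $k(n-k+1)-n=(k-1)(n-k)\ge 0$. Then $k^s = n^s (n/k)^t \le n^s (n-k+1)^t$, and since $0\le n-k\le n-k+1$ with $n-k+1\ge 1$ we get $(n-k)^\ell(n-k+1)^t\le (n-k+1)^{\ell+t}\le (n-k+1)^L$, where $L:=\lceil \ell+t\rceil$ is a nonnegative integer depending only on $\ell$ and $s$. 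Expanding $(n-k+1)^L$ by the binomial theorem and interchanging the finite sums,
\[
\sum_{k=1}^n \frac{A(n,k)}{n!}(n-k)^\ell k^s \;\le\; n^s\sum_{j=0}^{L}\binom{L}{j}\sum_{k=1}^n \frac{A(n,k)}{n!}(n-k)^j,
\]
and for $n\ge L$ each inner sum is $O(1)$ by Theorem~\ref{th:A-(n-k)u}, so the whole quantity is $O(n^s)$ with a constant depending only on $\ell$ and $s$.

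I do not expect any real obstacle: the argument is essentially a two-line reduction once the inequality $n/k\le n-k+1$ is noticed, after which everything collapses onto Theorem~\ref{th:A-(n-k)u}. The only points needing a word of care are the convention $0^0=1$ (relevant to the $j=0$ term when $k=n$) and the hypothesis $n\ge j$ in Theorem~\ref{th:A-(n-k)u}, both harmless in an asymptotic statement.
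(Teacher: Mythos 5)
Your proof is correct, and it takes a genuinely different (and arguably cleaner) route than the paper's. The paper also disposes of $s\ge 0$ trivially, but for $s<0$ it splits off the $k=n$ term, introduces the partial sums $Q_n(t)=\sum_{k\le t}\frac{A(n,k)}{n!}(n-k)^{\ell-s}$, applies the Abel summation formula, and exploits the unimodality of $t(n-t)$ to control the resulting telescoping differences $t^s(n-t)^s-(t+1)^s(n-t-1)^s$; non-integer $s$ then requires a further reduction to $\lfloor s\rfloor$. Your observation that $n/k\le n-k+1$ (equivalently $(k-1)(n-k)\ge 0$) replaces all of this machinery: it converts the singular weight $k^s$ directly into a polynomial in $n-k$ times $n^{s}$, after which the binomial expansion of $(n-k+1)^{L}$ collapses everything onto the already-established identity \eqref{eq:A-(n-k)u}, uniformly in $s<0$ with no integrality distinction. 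Since every summand is nonnegative, the upper bound alone gives the $O(n^{s})$ estimate, and the implied constant visibly depends only on $\ell$ and $s$ through $L$ and the Bell numbers $B_0,\dots,B_{L}$. The only caveats are the ones you already flagged (the convention $0^{0}=1$ and the finitely many $n<L$ excluded by the hypothesis of Theorem~\ref{th:A-(n-k)u}), and neither affects the asymptotic conclusion.
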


In what follows, an important recipe that will be repeatedly utilized is the \emph{Abel summation formula} \cite[p.~3, Theorem 0.1]{Ten2015}: 

\begin{lemma}[Abel summation formula]
	Let $\{u_n\}_{n\ge 1}$ and $\{v_n\}_{n\ge 1}$ be sequences of complex numbers. Then for any $N\ge 1$,
	\begin{align*}
		\sum_{n=1}^N u_n v_n = U(N) v_{N+1} + \sum_{n=1}^N U(n) \big(v_n-v_{n+1}\big),
	\end{align*}
	where $U(n):=\sum_{k=1}^n u_k$.
\end{lemma}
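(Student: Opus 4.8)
The plan is to prove the identity by the standard partial-summation rearrangement: write each $u_n$ as a difference of consecutive partial sums and then reindex. Concretely, adopt the convention $U(0)=0$, so that $u_n=U(n)-U(n-1)$ for every $n\ge 1$. Substituting this into the left-hand side gives
\[
\sum_{n=1}^N u_n v_n=\sum_{n=1}^N \big(U(n)-U(n-1)\big)v_n=\sum_{n=1}^N U(n)v_n-\sum_{n=1}^N U(n-1)v_n .
\]
In the second sum I would shift the summation index by one; since $U(0)=0$ the $n=1$ term drops out, so that $\sum_{n=1}^N U(n-1)v_n=\sum_{n=1}^{N-1}U(n)v_{n+1}$. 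Hence
\[
\sum_{n=1}^N u_n v_n=\sum_{n=1}^N U(n)v_n-\sum_{n=1}^{N-1}U(n)v_{n+1} .
\]

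Next I would peel off the top term $U(N)v_{N+1}$: adding and subtracting it extends the range of the second sum to $1\le n\le N$, yielding
\[
\sum_{n=1}^N u_n v_n=U(N)v_{N+1}+\sum_{n=1}^N U(n)v_n-\sum_{n=1}^N U(n)v_{n+1}=U(N)v_{N+1}+\sum_{n=1}^N U(n)\big(v_n-v_{n+1}\big),
\]
which is exactly the claimed formula. As an alternative I could run a one-line induction on $N$: the base case $N=1$ is the trivial identity $u_1 v_1=U(1)v_2+U(1)(v_1-v_2)$, and the inductive step uses $U(N)+u_{N+1}=U(N+1)$ to absorb the new term $u_{N+1}v_{N+1}$ into $U(N)v_{N+1}$ and to account for the new summand $U(N+1)(v_{N+1}-v_{N+2})$.

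Since every step is a finite rearrangement of a finite sum, there is no genuine obstacle here; no analytic input is required, and the statement holds verbatim for complex-valued sequences. The only point that needs care is the bookkeeping of the index shift and of the boundary terms — in particular the role of the convention $U(0)=0$ and the appearance of $v_{N+1}$, which is absent from the left-hand side — so I would state that convention explicitly at the outset of the proof.
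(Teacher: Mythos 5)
Your proof is correct: the decomposition $u_n = U(n)-U(n-1)$ with the convention $U(0)=0$, the index shift, and the extraction of the boundary term $U(N)v_{N+1}$ are all carried out accurately, and the inductive alternative you sketch also works. The paper itself offers no proof of this lemma — it simply cites it as a standard result (Tenenbaum, \emph{Introduction to Analytic and Probabilistic Number Theory}, Theorem 0.1) — so there is nothing to compare against; your argument is the standard summation-by-parts derivation and fills that gap completely.
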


Now we are ready to prove Corollary \ref{coro:Q-s-bound}.

\begin{proof}[Proof of Corollary \ref{coro:Q-s-bound}]
	Note that the summand with $k=n$ in \eqref{eq:Q-s-bound} is $0$ when $\ell>0$, and $O(n^{s})$ when $\ell=0$ since $0\le \frac{A(n,n)}{n!}\le 1$ in view of \eqref{sum=1}. Hence, it is sufficient to show that
	\begin{align}\label{eq:Q-s-bound-new}
		\sum_{k=1}^{n-1} \frac{A(n,k)}{n!}(n-k)^\ell\, k^s = O(n^{s}).
	\end{align}
	
	The cases where $s\ge 0$ are simpler as we directly have
	\begin{align*}
		\sum_{k=1}^{n-1} \frac{A(n,k)}{n!}(n-k)^\ell\, k^s \le n^s\cdot \sum_{k=1}^{n-1} \frac{A(n,k)}{n!}(n-k)^\ell = O(n^s),
	\end{align*}
	where \eqref{eq:A-(n-k)u} has been applied to derive that the summation in the middle part of the above is $O(1)$.
	
	In what follows, we always assume that $s<0$. We begin with the cases where $s$ is an integer. Define the following partial sums with $1\le t\le n$:
	\begin{align*}
		Q_n(t)=Q_n^{(\ell,s)}(t):= \sum_{k=1}^t \frac{A(n,k)}{n!}(n-k)^{\ell-s}.
	\end{align*}
	Recalling that \eqref{eq:A-(n-k)u} advocates that $Q_n(n)$ converges as $n$ goes to infinity, we may find a constant $K$, depending on $\ell$ and $s$, such that for every $n\ge 1$,
	\begin{align*}
		0\le Q_n(1)\le Q_n(2)\le \cdots \le Q_n(n) \le K.
	\end{align*}
	Now we apply the Abel summation formula to obtain
	\begin{align*}
		&\quad\; \sum_{k=1}^{n-1} \frac{A(n,k)}{n!}(n-k)^\ell\, k^s\\
		&= \sum_{k=1}^{n-1} \frac{A(n,k)}{n!}(n-k)^{\ell-s}\cdot k^s (n-k)^s\\
		&= Q_n(n-1)\cdot (n-1)^s + \sum_{t=1}^{n-2} Q_n(t)\cdot \big(t^s (n-t)^s-(t+1)^s (n-t-1)^s\big).
	\end{align*}
	Note that the sequence $\{t (n-t)\}_{1\le t\le (n-1)}$ is unimodal. That is, if we let $T=\left\lfloor\frac{n}{2}\right\rfloor$ where the \emph{floor function} $\lfloor x\rfloor$ denotes the greatest integer not exceeding $x$, then
	\begin{align*}
		1\cdot (n-1)\le \cdots \le T\cdot (n-T)\ge \cdots \ge (n-1)\cdot \big(n-(n-1)\big).
	\end{align*}
	We then reformulate the above summation as
	\begin{align*}
		\sum_{k=1}^{n-1} \frac{A(n,k)}{n!}(n-k)^\ell\cdot k^s &= Q_n(n-1)\cdot (n-1)^s\\
		&\quad + \sum_{t=1}^{T-1} Q_n(t)\cdot \big(t^s (n-t)^s-(t+1)^s (n-t-1)^s\big)\\
		&\quad+ \sum_{t=T}^{n-2} Q_n(t)\cdot \big(t^s (n-t)^s-(t+1)^s (n-t-1)^s\big).
	\end{align*}
	First, since $Q_n(n-1)\le K$, we have the estimate 
	\begin{align*}
		Q_n(n-1)\cdot (n-1)^s = O(n^{s}).
	\end{align*}
	Next, bearing in mind that $s<0$, we have
	\begin{align*}
		t^s (n-t)^s-(t+1)^s (n-t-1)^s
		\begin{cases}
			\ge 0, & \text{if $1\le t\le T-1$},\\
			\le 0, & \text{if $T\le t\le n-2$}.
		\end{cases}
	\end{align*}
	Thus,
	\begin{align*}
		0\le \sum_{t=1}^{T-1} Q_n(t)\cdot \big(t^s (n-t)^s-(t+1)^s (n-t-1)^s\big)\le K\cdot \big((n-1)^s - T^s (n-T)^s\big),
	\end{align*}
	and
	\begin{align*}
		0\ge \sum_{t=T}^{n-2} Q_n(t)\cdot \big(t^s (n-t)^s-(t+1)^s (n-t-1)^s\big)\ge K\cdot \big(T^s (n-T)^s - (n-1)^s\big),
	\end{align*}
	while it is plain that
	\begin{align*}
		K\cdot \big((n-1)^s - T^s (n-T)^s\big) = O(n^{s}).
	\end{align*}
	Hence, \eqref{eq:Q-s-bound-new} is valid for integral $s<0$.
	
	If $s<0$ is not an integer, then we write $s_0=\lfloor s\rfloor$ so that $s-s_0\ge 0$. It follows that
	\begin{align*}
		\sum_{k=1}^{n-1} \frac{A(n,k)}{n!}(n-k)^\ell\, k^s &= \sum_{k=1}^{n-1} \frac{A(n,k)}{n!}(n-k)^\ell\, k^{s_0}\cdot k^{s-s_0}\\
		&\le n^{s-s_0} \cdot \sum_{k=1}^{n-1} \frac{A(n,k)}{n!}(n-k)^\ell\, k^{s_0}.
	\end{align*}
	We have shown that the last summation is $O(n^{s_0})$. The required estimate immediately follows.
\end{proof}

From now on, for any given $n\ge 1$, we define the partial sums $S_n(t)$ with $1\le t\le n$:
\begin{align*}
	S_n(t):= \sum_{k=1}^t \frac{A(n,k)}{n!}.
\end{align*}
In light of \eqref{sum=1}, we particularly have
\begin{equation} \label{eq:Snn=1}
S_n(n) =1.
\end{equation}
Meanwhile, for $S_n(n-1)$, we have an asymptotic estimate as follows:

\begin{lemma}
	As $n\to +\infty$,
	\begin{align}\label{eq:Sn(n-1)-asymp}
	S_n(n-1)\sim 1-e^{-1}.
	\end{align}
\end{lemma}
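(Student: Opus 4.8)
The plan is to sidestep the sum entirely by using the exact identity $S_n(n)=1$ recorded in \eqref{eq:Snn=1}. Since $S_n(n-1)=S_n(n)-\frac{A(n,n)}{n!}=1-\frac{A(n,n)}{n!}$, the lemma reduces to the single-term asymptotic $\frac{A(n,n)}{n!}\to e^{-1}$ as $n\to+\infty$. Once that is established, $S_n(n-1)\to 1-e^{-1}$, and because the limit $1-e^{-1}$ is a nonzero constant, this convergence is precisely the claimed asymptotic equivalence $S_n(n-1)\sim 1-e^{-1}$.

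To evaluate $\frac{A(n,n)}{n!}$, I would recall $A(n,k)=\binom{n-1}{k-1}A(k-1)$, so that $A(n,n)=A(n-1)$, and then substitute the closed form $A(m)=\frac{(m+2)!}{m+1}\sum_{i=0}^{m+2}\frac{(-1)^i}{i!}$ with $m=n-1$. This gives $\frac{A(n,n)}{n!}=\frac{A(n-1)}{n!}=\frac{n+1}{n}\sum_{i=0}^{n+1}\frac{(-1)^i}{i!}$, whence letting $n\to+\infty$ the prefactor $\frac{n+1}{n}\to 1$ and the truncated exponential series $\sum_{i=0}^{n+1}\frac{(-1)^i}{i!}\to e^{-1}$, so $\frac{A(n,n)}{n!}\to e^{-1}$ as required.

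A purely generating-function alternative avoids the closed form for $A(m)$: reading off the coefficient of $z^n$ on both sides of \eqref{eq:A-z-gf}, only the term $z^{n-m}(1-z)^m$ can contribute a $z^n$, and it does so with coefficient $(-1)^m$, so $\frac{A(n,n)}{n!}=\sum_{m=0}^{n-1}\frac{(n-m)(-1)^m}{n\cdot m!}=\sum_{m=0}^{n-1}\frac{(-1)^m}{m!}-\frac{1}{n}\sum_{m=1}^{n-1}\frac{(-1)^m}{(m-1)!}$; the first sum tends to $e^{-1}$ while the second is bounded, so the limit is again $e^{-1}$.

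There is no genuine obstacle in this lemma; the only point requiring a moment of care is that the statement asserts $\sim$ against the \emph{nonzero} constant $1-e^{-1}$, so what must be shown is exactly that $S_n(n-1)$ converges to $1-e^{-1}$ (and not, say, to $0$), which is delivered by the computation above.
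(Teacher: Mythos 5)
Your proof is correct and takes essentially the same route as the paper: both reduce the claim to $S_n(n-1)=1-\frac{A(n-1)}{n!}$ via \eqref{sum=1} and then show $\frac{A(n-1)}{n!}\to e^{-1}$. The only difference is cosmetic --- the paper evaluates this last limit by invoking the floor-function formula $A(n)=\left\lfloor \frac{(n+2)\cdot n!}{e}+\frac{1}{2}\right\rfloor$, whereas you use the defining alternating-sum expression for $A(n)$ directly (or, alternatively, the coefficient of $z^n$ in \eqref{eq:A-z-gf}), which is if anything slightly more self-contained.
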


\begin{proof}
	We invoke the following standard result \cite[p.~178, Remark 5.4 and p.~184, Exercise 5.7.1]{Cha2002}:
	\begin{align}\label{eq:A(n)-exact}
		A(n) = \left\lfloor \frac{(n+2)\cdot n!}{e}+\frac{1}{2}\right\rfloor.
	\end{align}
	Recalling from \eqref{sum=1} that $S_n(n-1) = 1 - \frac{A(n-1)}{n!}$, we immediately arrive at the required relation.
\end{proof}

%\begin{proof} Using \eqref{sum=1},
%\begin{align*}
%S(n-1) & = 1-\frac{A(n,n)}{n!} = 1- \frac{A(n-1)}{n!} = 1- \frac{n+1}{n}\sum_{i=0}^{n+1}\frac{(-1)^i}{i!} \sim 1 - e^{-1}.
%\end{align*}
%\end{proof}

\begin{remark}
	By performing a more delicate computation on the exact formula \eqref{eq:A(n)-exact} for $A(n)$, we further have explicit bounds: \textit{For $n\ge 200$},
	\begin{align}\label{eq:Sn(n-1)-bound}
		0.63 < S_n(n-1)< 0.64.
	\end{align}
\end{remark}

We end this section by deriving some useful identities for $S_n(t)$.

\begin{lemma}
	For $n\ge 1$,
	\begin{align}\label{eq:Sn-gf}
		\sum_{t=1}^n S_n(t)z^t = z^n + \sum_{m=1}^{n-1}\frac{n-m}{n\cdot m!}z^{n-m}(1-z)^{m-1}.
	\end{align}
\end{lemma}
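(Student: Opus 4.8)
The plan is to expand the left-hand side by interchanging the order of summation, sum the resulting geometric series, and then feed in the closed form from Corollary~\ref{coro:A-z-gf} together with the normalization $S_n(n)=1$.

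First I would introduce the shorthand $a_k := A(n,k)/n!$ and write, for $z\neq 1$,
\begin{align*}
	\sum_{t=1}^n S_n(t)z^t = \sum_{t=1}^n \sum_{k=1}^t a_k z^t = \sum_{k=1}^n a_k \sum_{t=k}^n z^t = \frac{1}{1-z}\sum_{k=1}^n a_k\bigl(z^k - z^{n+1}\bigr).
\end{align*}
Since $\sum_{k=1}^n a_k = S_n(n) = 1$ by \eqref{eq:Snn=1}, the $z^{n+1}$ part contributes exactly $-z^{n+1}/(1-z)$, so that
\begin{align*}
	\sum_{t=1}^n S_n(t)z^t = \frac{1}{1-z}\left(\sum_{k=1}^n \frac{A(n,k)}{n!}z^k - z^{n+1}\right).
\end{align*}

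Next I would substitute $\sum_{k=1}^n \frac{A(n,k)}{n!}z^k = \sum_{m=0}^{n-1}\frac{n-m}{n\cdot m!}z^{n-m}(1-z)^m$ from Corollary~\ref{coro:A-z-gf}, peel off the $m=0$ summand (which is simply $z^n$), and combine it with the $-z^{n+1}$ to form the factor $z^n(1-z)$. Dividing through by $1-z$ then removes one power of $(1-z)$ from $z^n(1-z)$ and from each remaining summand with $1\le m\le n-1$, yielding exactly $z^n + \sum_{m=1}^{n-1}\frac{n-m}{n\cdot m!}z^{n-m}(1-z)^{m-1}$, which is the right-hand side of \eqref{eq:Sn-gf}. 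Finally, since both sides of \eqref{eq:Sn-gf} are polynomials in $z$ that agree for all $z\neq 1$, the identity holds for every $z$, the apparent pole at $z=1$ being removable.

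I do not anticipate a real obstacle; the computation is short. The only points that warrant a little care are keeping the bookkeeping of the $m=0$ term straight when cancelling the factor $(1-z)$, and stating explicitly that \eqref{eq:Sn-gf} is a polynomial identity so that the intermediate division by $1-z$ (and, if needed later, the specialization $z=1$) is legitimate.
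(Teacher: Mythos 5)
Your proof is correct and follows essentially the same route as the paper: interchange the order of summation, sum the geometric series to get $\sum_k \frac{A(n,k)}{n!}\frac{z^k-z^{n+1}}{1-z}$, and then invoke Corollary~\ref{coro:A-z-gf} together with \eqref{eq:Snn=1}. The paper leaves the final cancellation of the $(1-z)$ factor implicit, whereas you carry it out explicitly; the extra care about the removable point $z=1$ is fine but not needed beyond a remark.
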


\begin{proof}
	We have, by interchanging the order of summations, that
	\begin{align*}
		\sum_{t=1}^n S_n(t)z^t = \sum_{t=1}^n \sum_{k=1}^t \frac{A(n,k)}{n!}z^t
		= \sum_{k=1}^n \frac{A(n,k)}{n!} \sum_{t=k}^n z^t
		= \sum_{k=1}^n \frac{A(n,k)}{n!} \frac{z^k-z^{n+1}}{1-z}.
	\end{align*}
	Invoking Corollary \ref{coro:A-z-gf} gives the desired result.
\end{proof}

%We apply the operator $[\frac{d}{dz}]_{z=1}$ to \eqref{eq:Sn-gf} and obtain the following evaluation:
%
%\begin{corollary}\label{coro:S_n-t-power-sums-+1}
%	For $n\ge 2$,
%	\begin{align}\label{eq:sum-S-1}
%		\sum_{t=1}^n S_n(t)\cdot t = 2n-\frac{5}{2}+\frac{2}{n}.
%	\end{align}
%\end{corollary}

We apply $\ell$ times the operator $z\frac{d}{dz}$ to \eqref{eq:Sn-gf} and then take $z=1$ to obtain the following estimate:

\begin{corollary}\label{coro:S_n-t-power-sums-l}
	Let $\ell$ be a nonnegative integer. Then as $n\to +\infty$,
	\begin{align}\label{eq:sum-S-1}
		\sum_{t=1}^n S_n(t)\cdot t^\ell = 2n^{\ell} +O(n^{\ell-1}),
	\end{align}
	where the asymptotic relation depends only on $\ell$.
\end{corollary}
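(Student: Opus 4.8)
The plan is to apply the operator $z\frac{d}{dz}$ exactly $\ell$ times to the generating function identity \eqref{eq:Sn-gf} and then evaluate at $z=1$, exactly as the statement suggests. Writing $\left(z\frac{d}{dz}\right)^\ell z^t = t^\ell z^t$, the left-hand side of \eqref{eq:Sn-gf} produces precisely $\sum_{t=1}^n S_n(t)\, t^\ell$ at $z=1$. So the task reduces to showing that the right-hand side, after the same operator is applied and $z$ set to $1$, equals $2n^\ell + O(n^{\ell-1})$.

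For the right-hand side, the term $z^n$ contributes $n^\ell$. For the sum $\sum_{m=1}^{n-1}\frac{n-m}{n\cdot m!}z^{n-m}(1-z)^{m-1}$, I would reason as in the proof of Theorem \ref{th:A-(n-k)u}: when we apply $\left(z\frac{d}{dz}\right)^\ell$ and then set $z=1$, any summand still carrying a positive power of $(1-z)$ dies, so only the summand $m=1$ survives (for $\ell\ge 1$; the $m$-th term has a factor $(1-z)^{m-1}$, which vanishes at $z=1$ unless $m=1$, and only $\ell$ applications of the operator cannot kill more than $\ell$ powers of $(1-z)$, but since $m-1$ can be as small as $0$ only at $m=1$... actually more carefully: for $2\le m\le \ell+1$ the operator can remove the $(1-z)^{m-1}$ factor, giving further nonzero contributions). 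Here I need to be more careful: the surviving terms are those with $1\le m\le \ell+1$. The $m=1$ term contributes $\frac{n-1}{n}\left[\left(z\frac{d}{dz}\right)^\ell z^{n-1}\right]_{z=1} = \frac{n-1}{n}(n-1)^\ell = n^\ell + O(n^{\ell-1})$. Each term with $2\le m\le \ell+1$ contributes $\frac{n-m}{n\cdot m!}$ times a quantity obtained from $\left(z\frac{d}{dz}\right)^\ell\big(z^{n-m}(1-z)^{m-1}\big)$ at $z=1$; expanding $(1-z)^{m-1}$ and $z^{n-m}$ and using that $\left(z\frac{d}{dz}\right)^\ell z^j = j^\ell z^j$, one gets a fixed linear combination (depending only on $\ell$ and $m$) of terms $j^\ell$ with $j = n-m, n-m+1, \dots, n-1$, which telescopes down to $O(n^{\ell-1})$ because the $(1-z)^{m-1}$ factor forces the leading $n^\ell$ contributions to cancel (the alternating sum of $\binom{m-1}{i}(-1)^i(n-m+i)^\ell$ is a finite difference of order $m-1\ge 1$ of a degree-$\ell$ polynomial, hence has degree at most $\ell-(m-1)\le \ell-1$ in $n$). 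Summing the $m=1$ contribution $n^\ell+O(n^{\ell-1})$, the $z^n$ contribution $n^\ell$, and the $O(n^{\ell-1})$ from the remaining finitely many terms yields $2n^\ell+O(n^{\ell-1})$, with the implied constant depending only on $\ell$.

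The main obstacle is the bookkeeping for the terms $2\le m\le \ell+1$: one must verify that each such term is genuinely $O(n^{\ell-1})$ rather than $O(n^\ell)$, i.e., that the presence of the factor $(1-z)^{m-1}$ with $m\ge 2$ suppresses the top-order term. The cleanest way to see this is the finite-difference argument above — $\left[\left(z\tfrac{d}{dz}\right)^\ell\!\left(z^{a}(1-z)^{b}\right)\right]_{z=1}$ with $b\ge 1$ equals the $b$-th forward difference (up to sign) of the polynomial $j\mapsto j^\ell$ evaluated around $j=a$, which is a polynomial in $a$ of degree $\ell-b$, so with $a=n-m$ this is $O(n^{\ell-1})$ whenever $b=m-1\ge 1$. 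Alternatively, since $\ell$ is fixed and there are only $\ell$ such extra terms, one can just bound each crudely. Either way the argument is short once the vanishing mechanism is identified, and it mirrors the computation already carried out in the proof of Theorem \ref{th:A-(n-k)u}.
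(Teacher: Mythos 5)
Your proposal is correct and is exactly the paper's intended argument: the paper proves this corollary in one line by applying $(z\frac{d}{dz})^\ell$ to \eqref{eq:Sn-gf} and setting $z=1$, and you have supplied the supporting details (the $z^n$ term and the $m=1$ term each contribute $n^\ell+O(n^{\ell-1})$, while the finitely many terms with $2\le m\le\ell+1$ are $O(n^{\ell-1})$ by the finite-difference degree-drop argument, mirroring the vanishing mechanism in the proof of Theorem \ref{th:A-(n-k)u}). No gaps.
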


Meanwhile, by applying the operator $[\int z^{-1}\bullet dz]$ zero times, once and twice to \eqref{eq:Sn-gf} and then let $z=1$, respectively, we have the following relations:

\begin{corollary}\label{coro:S_n-t-power-sums}
	For $n\ge 1$,
	\begin{align}
		\sum_{t=1}^n S_n(t) &= 2-\frac{1}{n},\label{eq:sum-S-0}\\
		\sum_{t=1}^n S_n(t)\cdot \frac{1}{t} &= \frac{1}{n} + \sum_{m=1}^{n-1}\frac{(n-m)!}{m\cdot n!},\label{eq:sum-S--1}\\
		\sum_{t=1}^n S_n(t)\cdot \frac{1}{t^2} &= \frac{1}{n^2} + \sum_{m=1}^{n-1}\frac{(n-m-1)!}{n\cdot n!} + \sum_{m=1}^{n-1}\frac{(n-m)!}{m\cdot n!}\big(\mathcal{H}_n - \mathcal{H}_{n-m}\big),\label{eq:sum-S--2}
	\end{align}
    where $\mathcal{H}_n=\sum_{k=1}^n \frac{1}{k}$ is the $n$-th harmonic number. 
\end{corollary}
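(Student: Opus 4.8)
The plan is to start from the generating-function identity \eqref{eq:Sn-gf} and extract the three target sums by applying, respectively, zero, one, and two copies of the integral operator $g(z)\mapsto \int_0^z t^{-1}g(t)\,dt$ and then setting $z=1$. The key observation is that this operator sends $z^{n-m}(1-z)^{m-1}$ to something whose value at $z=1$ (or whose next antiderivative's value at $z=1$) can be read off from a Beta-integral evaluation, while it sends the isolated term $z^n$ to $z^n/n$, then to $z^n/n^2$, etc. Concretely, for the first identity I would simply divide \eqref{eq:Sn-gf} by $z$ and integrate once: $\int_0^1 z^{-1}\big(z^n+\sum_{m\ge 1}\tfrac{n-m}{n\,m!}z^{n-m}(1-z)^{m-1}\big)dz$. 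On the left this gives $\sum_t S_n(t)/t$; on the right the $z^n$ term contributes $1/n$, and the generic term contributes $\tfrac{n-m}{n\,m!}\int_0^1 z^{n-m-1}(1-z)^{m-1}dz=\tfrac{n-m}{n\,m!}\cdot\tfrac{(n-m-1)!(m-1)!}{(n-1)!}=\tfrac{(n-m)!}{m\,n!}$, which is exactly the right-hand side of \eqref{eq:sum-S--1}. This is the cleanest of the three.

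For \eqref{eq:sum-S--2} I would iterate: first compute the antiderivative $G(z)$ of $z^{-1}\cdot(\text{RHS of \eqref{eq:Sn-gf}})$ with $G(0)=0$, then form $\int_0^1 z^{-1}G(z)\,dz$, which by interchanging the (finite) sum with the double integral equals $\sum_t S_n(t)\int_0^1\!\!\int_0^z (tz)^{-1}\,dz'\cdots$ — more precisely, applying $\int_0^z(\cdot)/w\,dw$ twice to $z^t$ yields $z^t/t^2$, so the left side is $\sum_t S_n(t)/t^2$. On the right side, the $z^n$ term gives $1/n^2$. For the generic term I would write, using the one-fold result above, that the first antiderivative of $\tfrac{n-m}{n\,m!}z^{-1}z^{n-m}(1-z)^{m-1}$ is $\tfrac{n-m}{n\,m!}\int_0^z w^{n-m-1}(1-w)^{m-1}dw$; dividing again by $z$ and integrating over $[0,1]$, I would swap the order of the two integrations (both over a triangle $0\le w\le z\le 1$) to get $\int_0^1 w^{n-m-1}(1-w)^{m-1}\big(\int_w^1 z^{-1}dz\big)dw=-\int_0^1 w^{n-m-1}(1-w)^{m-1}\log w\,dw$. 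That last integral is a derivative of a Beta function in its first parameter: $\int_0^1 w^{a-1}(1-w)^{b-1}\log w\,dw=B(a,b)\big(\psi(a)-\psi(a+b)\big)$ with $a=n-m$, $b=m$, and $\psi(a)-\psi(a+b)=-(\mathcal H_{n-1}-\mathcal H_{n-m-1})$ on integers; after multiplying by $\tfrac{n-m}{n\,m!}$ and simplifying the Beta value to $\tfrac{(n-m-1)!(m-1)!}{(n-1)!}$, the harmonic-number differences should collapse to $\mathcal H_n-\mathcal H_{n-m}$ as claimed, and a second elementary term $\sum_m \tfrac{(n-m-1)!}{n\,n!}$ appears from the boundary/constant of integration bookkeeping (equivalently, from the $\log$-free piece when one is careful about the inner integral's lower limit). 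I would double-check this harmonic-number identity by the shift $\mathcal H_{n-1}-\mathcal H_{n-m-1}=(\mathcal H_n-\tfrac1n)-(\mathcal H_{n-m}-\tfrac1{n-m})$ and absorb the $\tfrac1n,\tfrac1{n-m}$ discrepancies into the $z^n$ term and the extra sum, respectively.

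The identity \eqref{eq:sum-S-0} is the ``zero-fold'' case: set $z=1$ directly in \eqref{eq:Sn-gf} after noting $z^{n-m}(1-z)^{m-1}\big|_{z=1}=0$ for $m\ge 2$ and $=z^{n-1}\big|_{z=1}=1$ for $m=1$, giving $1+\tfrac{n-1}{n\cdot 1!}=2-\tfrac1n$; alternatively it follows from Corollary \ref{coro:S_n-t-power-sums-l} with $\ell=0$, but the direct evaluation is exact rather than asymptotic, which is what the statement demands. I expect the main obstacle to be the bookkeeping in \eqref{eq:sum-S--2}: correctly handling the constants of integration (so that each antiderivative vanishes at $z=0$, which is needed for the term-by-term integration to reproduce $z^t/t$ and $z^t/t^2$ on the left), and carefully justifying the interchange of the order of the iterated integral against the finite sum and against the nested $z$-integration — routine by Fubini on a compact triangle but easy to slip on signs. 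Once the Beta-integral and digamma/harmonic-number evaluation $\int_0^1 w^{a-1}(1-w)^{b-1}\log w\,dw = \tfrac{(a-1)!(b-1)!}{(a+b-1)!}\big(\mathcal H_{a-1}-\mathcal H_{a+b-1}\big)$ is in hand, the rest is algebraic simplification.
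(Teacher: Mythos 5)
Your proposal is correct and is exactly the paper's method: the authors obtain the corollary by applying the operator $\int z^{-1}\bullet\, dz$ zero, one, and two times to \eqref{eq:Sn-gf} and setting $z=1$, which is precisely your Beta-integral computation (and your harmonic-number shift $\mathcal{H}_{n-1}-\mathcal{H}_{n-m-1}=(\mathcal{H}_n-\mathcal{H}_{n-m})-\tfrac1n+\tfrac1{n-m}$ does collapse the leftover pieces to $\tfrac{(n-m-1)!}{n\cdot n!}$, matching \eqref{eq:sum-S--2}). No gaps.
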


\section{Asymptotics for a generic recurrence}\label{sec:asymp-generic}

Recall that the recurrence \eqref{rec-1} for the mean values $\mu_n$ can be reformulated as
\begin{align}\label{eq:mu-original}
	\left(1-\frac{A(n,n)}{n!}\right)\mu_n = 1+\sum_{k=1}^{n-1}\frac{A(n,k)}{n!}\mu_k,
\end{align}
However, as hinted by \eqref{eq:E-poly}, the recurrence relations \eqref{eq:mth-M} for the central moments of $X_n$ have a more complicated form in which the $1$ on the right-hand side of \eqref{eq:mu-original} is replaced with much more sophisticated terms. This motivates the investigation of the asymptotics for sequences satisfying a generic recurrence.

\begin{definition}
	Let $\{\lambda_n\}_{n\ge 1}$ be a complex sequence such that $\lambda_n\sim Mn^L$ as $n\to +\infty$, wherein $L$ is a fixed nonnegative integer, and $M$ is a fixed complex number, which, in addition, is nonzero when $L$ is nonzero. Write
	\begin{align*}
		\delta_n:=\lambda_n-Mn^L.
	\end{align*}
	We define a complex sequence $\{\xi_n\}_{n\ge 1}$ with given initial values $\xi_1,\ldots,\xi_{n_0}$ for a certain $n_0\ge 2$ by the recurrence
	\begin{align}\label{eq:rec:xi}
		\left(1-\frac{A(n,n)}{n!}\right)\xi_n = \lambda_n+\sum_{k=1}^{n-1}\frac{A(n,k)}{n!}\xi_k,
	\end{align}
	for every $n>n_0$.
\end{definition}

We are interested in the growth rate of $\xi_n$.

\begin{theorem}\label{th:xi-asymp}
	As $n\to+\infty$,
	\begin{align}\label{eq:xi-asymp}
		\xi_n\sim \frac{M}{L+1}\,n^{L+1},
	\end{align}
	where the asymptotic relation depends only on $L$ and $M$. More precisely, letting
	\begin{align}\label{eq:eta}
		\eta_n:=\xi_n-\frac{M}{L+1}\,n^{L+1},
	\end{align}
	there exists a positive constant $C$, depending only on $L$ and $M$, such that for all $n\ge 1$,
	\begin{align}\label{eq:eta-bound}
		\big|\eta_n\big| < C \sum_{j=1}^n  \left(\big|\delta_j\big|+j^{L-1}\right).
	\end{align}
	
\end{theorem}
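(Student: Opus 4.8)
The plan is to substitute the ansatz $\xi_n = \frac{M}{L+1}n^{L+1} + \eta_n$ into the recurrence \eqref{eq:rec:xi} and derive a recurrence for the error sequence $\eta_n$ of the same shape, but with an inhomogeneous term that is small. Concretely, writing $p_n := \frac{M}{L+1}n^{L+1}$, I would plug $\xi_k = p_k + \eta_k$ into \eqref{eq:rec:xi} to get
\begin{align*}
	\left(1-\tfrac{A(n,n)}{n!}\right)\eta_n = \left(1-\tfrac{A(n,n)}{n!}\right)\eta_n,
\end{align*}
where the new right-hand side is
\begin{align*}
	R_n := \lambda_n + \sum_{k=1}^{n-1}\tfrac{A(n,k)}{n!}p_k - \left(1-\tfrac{A(n,n)}{n!}\right)p_n.
\end{align*}
The first task is to show $R_n = O\!\big(|\delta_n| + n^{L}\big)$, in fact with the $n^L$ contribution having a benign sign or being genuinely of lower effective order after the telescoping. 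To estimate $\sum_{k=1}^{n-1}\tfrac{A(n,k)}{n!}p_k - \big(1-\tfrac{A(n,n)}{n!}\big)p_n = \sum_{k=1}^{n-1}\tfrac{A(n,k)}{n!}(p_k - p_n)$, I would expand $p_k - p_n = \frac{M}{L+1}\big(k^{L+1}-n^{L+1}\big)$ and use the binomial/finite-difference identity $n^{L+1}-k^{L+1} = \sum_{j\ge 1}\binom{L+1}{j}(n-k)^j k^{L+1-j}$ together with Corollary \ref{coro:Q-s-bound}, which gives $\sum_{k}\tfrac{A(n,k)}{n!}(n-k)^j k^{L+1-j} = O(n^{L+1-j})$ for each $j\ge 1$; the dominant $j=1$ term is $O(n^{L})$. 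So indeed $R_n = O(|\delta_n| + n^{L})$, and since $1 - \tfrac{A(n,n)}{n!} = S_n(n-1) \to 1-e^{-1}$ is bounded away from $0$ for large $n$, we obtain the clean recurrence
\begin{align}\label{eq:eta-rec-plan}
	\eta_n = \frac{1}{S_n(n-1)}\left(R_n + \sum_{k=1}^{n-1}\tfrac{A(n,k)}{n!}\eta_k\right), \qquad |R_n| \le C_0\big(|\delta_j|\text{-type terms} + n^{L}\big).
\end{align}

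The second and main task is to bootstrap from \eqref{eq:eta-rec-plan} to the explicit bound \eqref{eq:eta-bound}. The natural strategy is induction on $n$ with the hypothesis $|\eta_k| \le C\sum_{j=1}^k(|\delta_j| + j^{L-1})$ for all $k < n$; write $E(n)$ for the right-hand side (a nondecreasing sequence). Feeding this into \eqref{eq:eta-rec-plan}, the crux is to control $\sum_{k=1}^{n-1}\tfrac{A(n,k)}{n!}E(k)$. Since $E$ is increasing, Abel summation (the lemma quoted just before Corollary \ref{coro:Q-s-bound}, and used in exactly this way in the proof of that corollary) rewrites this as $S_n(n-1)E(n-1) - \sum_{t=1}^{n-2}S_n(t)\big(E(t+1)-E(t)\big)$; here $E(t+1)-E(t) = C(|\delta_{t+1}| + (t+1)^{L-1})$, and one uses $S_n(t) = \sum_{k\le t}\tfrac{A(n,k)}{n!}$ which, crucially, decays for small $t$ — this is the mechanism that creates the slack needed to absorb the error term $R_n/S_n(n-1) = O(|\delta_n| + n^{L-1}\cdot n) $ and close the induction with the \emph{same} constant $C$ (chosen large depending on $L,M$ at the base cases $n\le n_0$). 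I would arrange the inequality so that the "gain" is a term like $-c\,n^{L}$ or $-c\,(|\delta_n|)$ coming from the $S_n(n-1) < 1$ strict deficit times $E(n-1)$, compared to the would-be bound $C\,E(n)$; matching $E(n) - S_n(n-1)E(n-1) \ge (1-S_n(n-1))E(n-1) + C(|\delta_n|+n^{L-1})$ against the positive contribution of $R_n$ is what pins down how large $C$ must be. Finally, \eqref{eq:xi-asymp} follows from \eqref{eq:eta-bound} by noting $\sum_{j=1}^n(|\delta_j| + j^{L-1}) = o(n^{L+1})$: the $j^{L-1}$ part sums to $O(n^{L})$, and $\sum|\delta_j| = o(n^{L+1})$ because $\delta_j = \lambda_j - Mj^L = o(j^L)$ by hypothesis (Cesàro: an $o(j^L)$ sequence has partial sums $o(n^{L+1})$).

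The step I expect to be the main obstacle is the inductive closure in the second task — specifically, tracking constants carefully enough through the Abel-summation rearrangement so that the same $C$ works at stage $n$, and verifying that the decay of $S_n(t)$ for small $t$ (which is only implicit in the identities of Corollaries \ref{coro:S_n-t-power-sums-l} and \ref{coro:S_n-t-power-sums}) really does provide enough room against the $O(n^L)$ piece of $R_n$. A secondary subtlety is handling the regime $n \le n_0$ and the transition at $n_0$, where $1 - A(n,n)/n!$ could conceivably be small; but since there are only finitely many such $n$, they are absorbed into the choice of $C$. I would also need the harmless convention $0^0=1$ flagged in the paper's preamble when $L=0$ and $k=n$ in intermediate expressions.
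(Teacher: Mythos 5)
Your overall strategy (substitute the ansatz $\xi_n=\frac{M}{L+1}n^{L+1}+\eta_n$, estimate the resulting inhomogeneous term, then close an induction on the bound \eqref{eq:eta-bound}) is the same as the paper's, but there is a genuine gap at the central estimate. You only establish $R_n=O(|\delta_n|+n^{L})$, obtaining the $j=1$ term $\sum_k\frac{A(n,k)}{n!}(n-k)k^{L}=O(n^{L})$ from Corollary \ref{coro:Q-s-bound}, which gives no control on the leading \emph{coefficient}. What the argument actually requires is the exact cancellation
\begin{align*}
 R_n \;=\; Mn^{L}+\delta_n \;-\; M\sum_{k=1}^{n}\frac{A(n,k)}{n!}(n-k)\,k^{L} \;+\; O(n^{L-1}) \;=\; \delta_n+O(n^{L-1}),
\end{align*}
which holds because $\sum_k\frac{A(n,k)}{n!}(n-k)k^{L}=n^{L}+O(n^{L-1})$ with leading coefficient exactly $B_1=1$; this needs Theorem \ref{th:A-(n-k)u} (or, as in the paper, Abel summation against \eqref{eq:sum-S-1}), not just the order-of-magnitude bound \eqref{eq:Q-s-bound}. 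Without this cancellation the induction cannot close: the increment $E(n)-E(n-1)=C(|\delta_n|+n^{L-1})$ available at step $n$ is of order $n^{L-1}$, while your $R_n$ is allowed to be of order $n^{L}$.

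The fallback mechanism you propose for absorbing that extra $n^{L}$ is illusory. Since the recurrence reads $S_n(n-1)\,\eta_n=R_n+\sum_{k=1}^{n-1}\frac{A(n,k)}{n!}\eta_k$ and $\sum_{k=1}^{n-1}\frac{A(n,k)}{n!}=S_n(n-1)$, the inductive hypothesis gives $\bigl|\sum_{k<n}\frac{A(n,k)}{n!}\eta_k\bigr|\le C\,S_n(n-1)E(n-1)$, and after dividing by $S_n(n-1)$ this contributes exactly $CE(n-1)$ — the factor $S_n(n-1)$ cancels, so there is no ``deficit'' $(1-S_n(n-1))E(n-1)\approx e^{-1}E(n-1)$ to spend. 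Likewise the decay of $S_n(t)$ for small $t$ yields, via \eqref{eq:sum-S-1}, a gain of only $O(n^{L-1})$ from the $j^{L-1}$ part of $E$, again an order of magnitude short of $n^{L}$. Once $R_n=\delta_n+O(n^{L-1})$ is in hand, the rest of your plan (the $1/S_n(n-1)\le 2$ bound for $n>n_0$, absorbing $n\le n_0$ into $C$, and the Ces\`aro argument deducing \eqref{eq:xi-asymp} from \eqref{eq:eta-bound}) is correct and matches the paper.
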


\begin{proof}
	It is clear that \eqref{eq:eta-bound} implies \eqref{eq:xi-asymp}. Throughout, we assume that $n$ is such that $n>n_0$. In \eqref{eq:rec:xi}, we first apply \eqref{sum=1} to rewrite the multiplier on the left-hand side as
	\begin{align*}
		1-\frac{A(n,n)}{n!} = S_n(n-1),
	\end{align*}
	and then replace all $\xi_k$ on the right-hand side according to \eqref{eq:eta}. Thus,
	\begin{align}\label{eq:xi-Sigma}
		S_n(n-1)\cdot \xi_n = \lambda_n+ \Sigma_1+\Sigma_2,
	\end{align}
	where
	\begin{align*}
		\Sigma_1 &:= \sum_{k=1}^{n-1} \frac{A(n,k)}{n!}\frac{M}{L+1}\,k^{L+1},\\
		\Sigma_2 &:= \sum_{k=1}^{n-1} \frac{A(n,k)}{n!}\eta_k.
	\end{align*}
	
	Now we evaluate $\Sigma_1$ by means of the Abel summation formula:
	\begin{align*}
		\Sigma_1 &= \sum_{k=1}^{n-1} \frac{A(n,k)}{n!}\frac{M}{L+1}\,k^{L+1}\\
		&= S_n(n-1) \cdot \frac{M}{L+1}\,n^{L+1} + \sum_{k=1}^{n-1} S_n(k)\cdot \frac{M}{L+1} \big(k^{L+1}-(k+1)^{L+1}\big)\\
		&= S_n(n-1) \cdot \frac{M}{L+1}\,n^{L+1} - \sum_{k=1}^{n-1} S_n(k)\cdot \big(M k^L +O(k^{L-1})\big),
	\end{align*}
	where the big-$O$ term further vanishes in the case where $L=0$. Recalling that $S_n(n)=1$ from \eqref{eq:Snn=1}, we then apply \eqref{eq:sum-S-1} and obtain that
	\begin{align*}
		\sum_{k=1}^{n-1} S_n(k)\cdot \big(M k^L +O(k^{L-1})\big) = Mn^L + O(n^{L-1}).
	\end{align*}
	Thus,
	\begin{align}\label{eq:Sigma1}
		\Sigma_1 = S_n(n-1) \cdot \frac{M}{L+1}\,n^{L+1} - M n^L + O(n^{L-1}).
	\end{align}
	
	Substituting \eqref{eq:Sigma1} into \eqref{eq:xi-Sigma} implies that
	\begin{align*}
		S_n(n-1)\cdot \xi_n = \lambda_n+S_n(n-1) \cdot \frac{M}{L+1}\,n^{L+1} - Mn^L + \sum_{k=1}^{n-1} \frac{A(n,k)}{n!}\eta_k + O(n^{L-1}),
	\end{align*}
	which further gives us
	\begin{align}\label{eq:e-new}
		S_n(n-1)\cdot \eta_{n} = \delta_n + \sum_{k=1}^{n-1} \frac{A(n,k)}{n!}\eta_k + O(n^{L-1}),
	\end{align}
	where we have used the facts that $\xi_n-\frac{M}{L+1}\,n^{L+1} = \eta_n$ and that $\lambda_n-Mn^L = \delta_n$.
	
	Now we find a constant $C$ such that
	\begin{enumerate}[label=\textbf{(\alph*)~},leftmargin=*,labelsep=0cm,align=left,itemsep=6pt]
		\item $C\ge 2$;
		
		\item the inequality \eqref{eq:eta-bound} holds for every $n$ with $1\le n\le n_0$;
		
		\item the big-$O$ term in \eqref{eq:e-new} is bounded from above by $\frac{1}{2}C n^{L-1}$, that is,
		\begin{align}\label{eq:xi-simple-error-O}
			\big|O(n^{L-1})\big| < \frac{1}{2}C n^{L-1},
		\end{align}
		for every $n\ge 1$ (this is admissible since the big-$O$ term depends only on $L$ and $M$).
	\end{enumerate}
	
	Herein we prove \eqref{eq:eta-bound} inductively by first bounding for each $k$ such that $1\le k\le n-1$ for a certain $n>n_0$,
	\begin{align*}
		\big|\eta_k\big| < C \sum_{j=1}^{k}  \left(\big|\delta_j\big|+j^{L-1}\right)\le C \sum_{j=1}^{n-1}  \left(\big|\delta_j\big|+j^{L-1}\right).
	\end{align*}
	Thus,
	\begin{align*}
		\left|\sum_{k=1}^{n-1} \frac{A(n,k)}{n!}\eta_k\right| < C \sum_{j=1}^{n-1}  \left(\big|\delta_j\big|+j^{L-1}\right)\cdot \sum_{k=1}^{n-1} \frac{A(n,k)}{n!}
		= C \sum_{j=1}^{n-1} \left(\big|\delta_j\big|+j^{L-1}\right)\cdot S_n(n-1).
	\end{align*}
	It follows from \eqref{eq:e-new}, with \eqref{eq:xi-simple-error-O} recalled, that
	\begin{align*}
		\big|\eta_n\big| < \left(\big|\delta_n\big| + \frac{1}{2}Cn^{L-1}\right) \frac{1}{S_n(n-1)} + C \sum_{j=1}^{n-1}  \left(\big|\delta_j\big|+j^{L-1}\right).
	\end{align*}
	Bearing in mind from \eqref{sum=1},
	\begin{align*}
		\frac{1}{S_n(n-1)} = \frac{1}{1- \frac{A(n,n)}{n!}} \le \frac{1}{1-\frac{A(3,3)}{3!}} = 2,
	\end{align*}
	where we have used the assumption that $n>n_0\ge 2$. Then, it is immediately clear that \eqref{eq:eta-bound} holds for $n$ since we have assumed that $C\ge 2$.
\end{proof}

We may further elaborate on the error term $\eta_n$ when $\delta_n$ is efficiently bounded.

\begin{theorem}\label{th:xi-asymp-error}
	With the notation in Theorem \ref{th:xi-asymp}, we further have
	\begin{enumerate}[label=\textbf{(\roman*)},leftmargin=*,labelsep=0cm,align=left,itemsep=6pt]
		
		\item In the case where $L=0$, if we further require that $\delta_n = O(n^{-1})$ as $n\to +\infty$, then
		\begin{align}\label{eq:eta-diff-bound}
			\big|\eta_n-\eta_{n-1}\big| = O(n^{-1}).
		\end{align}
		
		\item In the case where $L\ge 1$, if we further require that $\delta_n = O(n^{L-1}\log n)$ as $n\to +\infty$, then
		\begin{align}\label{eq:eta-diff-bound-2}
			\big|\eta_n-\eta_{n-1}\big| = O(n^{L-1}\log n).
		\end{align}
		
	\end{enumerate}
	The above asymptotic relations depend only on $L$ and $M$.
\end{theorem}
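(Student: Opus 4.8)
The plan is to show that the first difference $\Delta_n:=\eta_n-\eta_{n-1}$ obeys a recurrence of exactly the same self-referential shape as \eqref{eq:rec:xi}, but now driven by a much smaller source term, and then to re-run the inductive scheme behind Theorem~\ref{th:xi-asymp}. First I would take the identity \eqref{eq:e-new}, valid for $n>n_0$, and apply the Abel summation formula with $u_j=A(n,j)/n!$ and $v_j=\eta_j$, so that $U(j)=S_n(j)$. This rewrites the sum in \eqref{eq:e-new} as
\[
\sum_{j=1}^{n-1}\frac{A(n,j)}{n!}\,\eta_j=S_n(n-1)\,\eta_n+\sum_{j=1}^{n-1}S_n(j)\big(\eta_j-\eta_{j+1}\big).
\]
Substituting this back, the term $S_n(n-1)\eta_n$ cancels against the left-hand side of \eqref{eq:e-new}, and after re-indexing by $i=j+1$ one is left with $\sum_{i=2}^{n}S_n(i-1)\,\Delta_i=\delta_n+O(n^{L-1})$. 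Isolating the $i=n$ term yields the key identity
\[
S_n(n-1)\,\Delta_n=\delta_n-\sum_{i=2}^{n-1}S_n(i-1)\,\Delta_i+O(n^{L-1}),\qquad n>n_0,
\]
in which the $O(n^{L-1})$ is precisely the uniformly bounded error term of \eqref{eq:e-new}; denote it $\rho_n$, so $|\rho_n|\le c_0\,n^{L-1}$ for a constant $c_0$ depending only on $L$ and $M$.

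Next I would record the two estimates on the coefficients $S_n(i-1)$ that the induction needs. Combining \eqref{eq:sum-S-0} with $S_n(n)=1$ from \eqref{eq:Snn=1} gives
\[
\sum_{i=2}^{n-1}S_n(i-1)=\sum_{t=1}^{n-2}S_n(t)=1-\tfrac1n-S_n(n-1),
\]
so by \eqref{eq:Sn(n-1)-bound} one has $\sum_{i=2}^{n-1}S_n(i-1)<0.37$ and $2S_n(n-1)-1+\tfrac1n>0.26$ for $n\ge 200$. For the delicate case $L=0$ a sharper estimate is needed: since $t^{-1}\ge(t+1)^{-1}$,
\[
\sum_{i=2}^{n-1}\frac{S_n(i-1)}{i}=\sum_{t=1}^{n-2}\frac{S_n(t)}{t+1}\le\sum_{t=1}^{n}\frac{S_n(t)}{t}-\frac{S_n(n-1)}{n}-\frac{1}{n+1},
\]
and \eqref{eq:sum-S--1} gives $\sum_{t=1}^{n}S_n(t)/t=\tfrac2n+O(n^{-2}\log n)$ (the terms with $m\ge 2$ in its right-hand sum are each $\le 1/(n(n-1))$, so contribute only $O(n^{-2}\log n)$). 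Hence $\sum_{i=2}^{n-1}S_n(i-1)/i\le(1-S_n(n-1))/n+O(n^{-2}\log n)\le 0.38/n$ for $n$ large. The mechanism here is that $S_n$ places nearly all of its mass at the top indices, exactly where $1/i$ is smallest.

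Finally I would run the induction. Put $g(n)=n^{-1}$ if $L=0$ and $g(n)=n^{L-1}\log n$ if $L\ge 1$. Since $\delta_n=O(g(n))$ by hypothesis and $n^{L-1}=O(g(n))$, there is a constant $D$, depending on $L$, $M$ and the implied constant in the hypothesis on $\delta_n$, with $|\delta_n|+|\rho_n|\le D\,g(n)$ for all $n\ge 2$. Fix $n_2\ge\max(n_0,200)$ past which the estimates above hold, and choose $C'$ large enough that $|\Delta_n|\le C'g(n)$ for all $2\le n\le n_2$ (finitely many constraints, all with $g(n)>0$) and, in addition, $C'\ge 4D$ when $L=0$ while $C'\ge D/0.26$ when $L\ge 1$. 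Then $|\Delta_n|\le C'g(n)$ for all $n\ge 2$ by strong induction: for $n>n_2$, assuming the bound for $2\le i\le n-1$, the key identity gives
\[
|\Delta_n|\le\frac{1}{S_n(n-1)}\Big(D\,g(n)+C'\sum_{i=2}^{n-1}S_n(i-1)\,g(i)\Big).
\]
When $L\ge 1$, monotonicity of $x\mapsto x^{L-1}\log x$ bounds $\sum_{i=2}^{n-1}S_n(i-1)g(i)$ by $g(n)\,(1-\tfrac1n-S_n(n-1))$, whereupon $C'\ge D/0.26$ and $2S_n(n-1)>1.26>1.26-\tfrac1n$ force $|\Delta_n|\le C'g(n)$. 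When $L=0$, one instead invokes $\sum_{i=2}^{n-1}S_n(i-1)/i\le 0.38/n$, giving $|\Delta_n|\le(D+0.38\,C')/(n\,S_n(n-1))\le C'/n$ once $C'\ge 4D$ and $S_n(n-1)>0.63$. This closes the induction and yields \eqref{eq:eta-diff-bound} and \eqref{eq:eta-diff-bound-2}.

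The step I expect to be the real obstacle is the case $L=0$, and within it the bound $\sum_{i<n}S_n(i-1)/i=O(n^{-1})$: a crude estimate here gives only $\eta_n-\eta_{n-1}=O(1)$, and a moderately careful one only $O(n^{-1}\log n)$, so extracting the clean $O(n^{-1})$ forces one to use the exact formula \eqref{eq:sum-S--1} for $\sum_t S_n(t)/t$ together with the precise size of $S_n(n-1)$ from \eqref{eq:Sn(n-1)-bound}. What ultimately turns the inductive estimate into a genuine contraction is the inequality $1-S_n(n-1)<S_n(n-1)$, i.e.\ $S_n(n-1)>\tfrac12$, i.e.\ the numerical fact that $e>2$; the case $L\ge 1$ is comparatively soft exactly because even the crude monotonicity bound already respects this same inequality.
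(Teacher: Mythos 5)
Your proposal is correct and follows essentially the same route as the paper: Abel summation turns \eqref{eq:e-new} into a self-referential recurrence for the first differences $\eta_n-\eta_{n-1}$, which is then closed by strong induction using the numerical facts $S_n(n-1)>0.63$, $\sum_{t\le n-2}S_n(t)<0.37$, and (for $L=0$) the sharpened bound $\sum_{t}S_n(t)/t=2/n+O(n^{-2}\log n)$ from \eqref{eq:sum-S--1}. The only differences from the paper are cosmetic choices of constants and your more explicit justification of the $O(n^{-2}\log n)$ tail, which the paper dismisses as routine.
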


We first need to apply the Abel summation formula to further reformulate $\Sigma_2$ in \eqref{eq:xi-Sigma}:
\begin{align}\label{eq:Sigma2}
	\Sigma_2 &= \sum_{k=1}^{n-1} \frac{A(n,k)}{n!}\eta_k\notag\\
	&= S_n(n-1) \cdot \eta_n + \sum_{k=1}^{n-1} S_n(k)\cdot  \big(\eta_k-\eta_{k+1}\big)\notag\\
	&= S_n(n-1) \cdot \eta_{n-1} + \sum_{k=1}^{n-2} S_n(k)\cdot  \big(\eta_k-\eta_{k+1}\big).
\end{align}
Substituting \eqref{eq:Sigma2} into \eqref{eq:e-new}, we have
\begin{align}\label{eq:e-diff-new}
	S_n(n-1)\cdot \big(\eta_{n}-\eta_{n-1}\big) = \delta_n + \sum_{k=1}^{n-2} S_n(k)\cdot  \big(\eta_k-\eta_{k+1}\big) + O(n^{L-1}).
\end{align}
Now we prove the two cases separately.

\begin{proof}[Proof of Part (i)]
	
	We choose a constant $c$, depending only on $M$, to be such that
	\begin{enumerate}[label=\textbf{(\alph*)~},leftmargin=*,labelsep=0cm,align=left,itemsep=6pt]
		\item the quantity $\delta_n + O(n^{-1})$ in \eqref{eq:e-diff-new} is bounded above by $\frac{0.23c}{n}$, that is,
		\begin{align}\label{eq:delta-Part-i}
			\big|\delta_n + O(n^{-1})\big| < \frac{0.23c}{n},
		\end{align}
		for every $n\ge 1$ (this is admissible since the big-$O$ term in \eqref{eq:e-diff-new} depends only on $M$ while we have also assumed that $\delta_n = O(n^{-1})$; here note that $L=0$);
		
		\item the inequality
		\begin{align}\label{eq:eta-diff-bound-new}
			\big|\eta_n-\eta_{n-1}\big| < \frac{c}{n}
		\end{align}
		holds for every $n$ with $2\le n\le \max\{200,n_0\}$.
	\end{enumerate}
	
	Inductively, we assume that \eqref{eq:eta-diff-bound-new} holds for each of $2,\ldots,n-1$ with a certain $n>\max\{200,n_0\}$, and we prove \eqref{eq:eta-diff-bound-new} for $n$. Note that this inductive hypothesis implies that
	\begin{align*}
		\left|\sum_{k=1}^{n-2} S_n(k)\cdot  \big(\eta_k-\eta_{k+1}\big)\right| < \sum_{k=1}^{n-2} S_n(k)\cdot \frac{c}{k+1}< \sum_{k=1}^{n-2} S_n(k)\cdot \frac{c}{k}.
	\end{align*}
	Meanwhile, it is known from \eqref{eq:sum-S--1} that
	\begin{align*}
		\sum_{k=1}^{n} S_n(k)\cdot \frac{1}{k} \sim \frac{2}{n}.
	\end{align*}
	Performing some routine computations, we explicitly have, with recourse to \eqref{eq:Snn=1} and \eqref{eq:Sn(n-1)-asymp}, that whenever $n>\max\{200,n_0\}$,
	\begin{align*}
		\sum_{k=1}^{n-2} S_n(k)\cdot \frac{1}{k} < \frac{0.4}{n}.
	\end{align*}
	Since $S_n(n-1)> 0.63$ by \eqref{eq:Sn(n-1)-bound}, we finally deduce from \eqref{eq:e-diff-new}, with \eqref{eq:delta-Part-i} in mind, that
	\begin{align*}
		\big|\eta_{n}-\eta_{n-1}\big| < \left(\frac{0.23c}{n} + \frac{0.4c}{n}\right) \frac{1}{S_n(n-1)}
		<\frac{0.63c}{n}\cdot \frac{1}{0.63}
		=\frac{c}{n}.
	\end{align*}
	
	Now we have proved that \eqref{eq:eta-diff-bound-new} holds for every $n\ge 2$, and it is immediately clear that \eqref{eq:eta-diff-bound} is valid.
\end{proof}

\begin{proof}[Proof of Part (ii)]
	
	We choose a constant $c$, depending only on $L$ and $M$, to be such that
	\begin{enumerate}[label=\textbf{(\alph*)~},leftmargin=*,labelsep=0cm,align=left,itemsep=6pt]
		\item the quantity $\delta_n + O(n^{L-1})$ in \eqref{eq:e-diff-new} is bounded above by $0.26c\cdot n^{L-1}\log n$, that is,
		\begin{align}\label{eq:delta-Part-ii}
			\big|\delta_n + O(n^{L-1})\big| < 0.26c\cdot n^{L-1}\log n,
		\end{align}
		for every $n\ge 1$ (this is admissible since the big-$O$ term in \eqref{eq:e-diff-new} depends only on $M$ while we have also assumed that $\delta_n = O(n^{L-1}\log n)$);
		
		\item the inequality
		\begin{align}\label{eq:eta-diff-bound-2-new}
			\big|\eta_n-\eta_{n-1}\big| < c\cdot n^{L-1}\log n
		\end{align}
		holds for every $n$ with $2\le n\le \max\{200,n_0\}$.
	\end{enumerate}
	
	Inductively, we assume that \eqref{eq:eta-diff-bound-2-new} holds for each of $2,\ldots,n-1$ with a certain $n>\max\{200,n_0\}$, and we prove \eqref{eq:eta-diff-bound-2-new} for $n$. Note that this inductive hypothesis implies that
	\begin{align*}
		\left|\sum_{k=1}^{n-2} S_n(k)\cdot  \big(\eta_k-\eta_{k+1}\big)\right| &< \sum_{k=1}^{n-2} S_n(k)\cdot c\cdot (k+1)^{L+1}\log(k+1)\\
		& \le c\cdot n^{L+1}\log n \cdot \sum_{k=1}^{n-2} S_n(k).
	\end{align*}
	Here we have used the fact that $n^{L-1}\log n$ is non-decreasing with respect to $n$ when $L\ge 1$. Meanwhile, it is known from \eqref{eq:sum-S-0} that
	\begin{align*}
		\sum_{k=1}^{n} S_n(k) < 2.
	\end{align*}
	Recalling that $S_n(n)=1$ from \eqref{eq:Snn=1} and that $S_n(n-1)> 0.63$ from \eqref{eq:Sn(n-1)-bound}, we have
	\begin{align*}
		\sum_{k=1}^{n-2} S_n(k) < 0.37.
	\end{align*}
	Finally, noting the bound given in \eqref{eq:delta-Part-ii}, it follows from \eqref{eq:e-diff-new} that
	\begin{align*}
		\big|\eta_{n}-\eta_{n-1}\big| &< \big(0.26c+0.37c\big)\cdot n^{L-1}\log n \cdot \frac{1}{S_n(n-1)}\\
		&< 0.63c \cdot n^{L-1}\log n \cdot \frac{1}{0.63} \\
		&= c\cdot n^{L-1}\log n.
	\end{align*}
	
	Now we have proved that \eqref{eq:eta-diff-bound-2-new} holds for every $n\ge 2$, and it is immediately clear that \eqref{eq:eta-diff-bound-2} is valid.
\end{proof}

\section{Mean value}\label{sec:mean-value}

In this section, we analyze the behavior of the mean value $\mu_n$ of $X_n$ as $n\rightarrow+\infty$.

\subsection{Explicit formula for the mean value}

Recall that it was shown by Rao et al.~\cite{Rao} that as $n\to+\infty$,
\begin{align*} 
	\mu_n\sim n.
\end{align*}
Alternatively, this asymptotic relation is a direct consequence of Theorem \ref{th:xi-asymp} by choosing $\lambda_n = 1$ for all $n\ge 1$, and then taking $\xi_n=\mu_n$. Now our objective is to elaborate on this asymptotic formula. 

\begin{theorem}\label{th:mu-new-formula}
	We have
	\begin{align}\label{eq:mu-new-formula}
		\mu_n = n + \mathcal{H}_{n-1} + \veps_n,
	\end{align}
	where the limit $\lim_{n\to+\infty} \veps_n$ exists. In particular, for $n\ge 2$,
	\begin{align}\label{eq:epsilon-diff}
		0 < \veps_n-\veps_{n+1} < \frac{1}{n^2}.
	\end{align}
\end{theorem}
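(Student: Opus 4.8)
The plan is to derive an exact recurrence for $\mu_n$ in the same shape as \eqref{eq:rec:xi} but with the explicit "source" term $1$, then substitute the ansatz $\mu_n = n + \mathcal{H}_{n-1} + \veps_n$ and track what recurrence $\veps_n$ must satisfy. First I would start from \eqref{eq:mu-original}, i.e.\ $S_n(n-1)\mu_n = 1 + \sum_{k=1}^{n-1}\frac{A(n,k)}{n!}\mu_k$, and on the right-hand side replace $\mu_k$ by $k + \mathcal{H}_{k-1} + \veps_k$. The term $\sum_{k=1}^{n-1}\frac{A(n,k)}{n!}\,k$ can be handled by writing $k = n - (n-k)$ and invoking \eqref{eq:A-(n-k)u} with $\ell=0,1$, which gives something of the form $S_n(n-1)\cdot n$ minus a Bell-number correction of size $O(1/n)$; the term $\sum_{k=1}^{n-1}\frac{A(n,k)}{n!}\mathcal{H}_{k-1}$ needs an Abel-summation treatment using the partial sums $S_n(k)$ together with $\mathcal{H}_{k-1}-\mathcal{H}_k = -1/k$ and the identities \eqref{eq:sum-S--1}, \eqref{eq:Snn=1}, \eqref{eq:Sn(n-1)-asymp}. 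After cancellation, this should collapse to
\begin{align*}
	S_n(n-1)\cdot\big(\veps_n - \veps_{n-1}\big) = -\sum_{k=1}^{n-2} S_n(k)\big(\veps_{k+1}-\veps_k\big) + (\text{explicit term of size }O(1/n^2)),
\end{align*}
or, more likely, a clean telescoping identity that directly yields the sign and size of $\veps_n - \veps_{n+1}$.

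Alternatively — and this is probably the cleaner route — I would apply Theorem \ref{th:xi-asymp} and Theorem \ref{th:xi-asymp-error} Part (i) with $\lambda_n \equiv 1$, $L=0$, $M=1$, $\xi_n = \mu_n$, so that $\delta_n = \lambda_n - M n^L = 0$ and hence $\eta_n = \mu_n - n$ satisfies $|\eta_n - \eta_{n-1}| = O(1/n)$; this already shows $\lim \eta_n$ need not exist but that $\eta_n - \mathcal{H}_{n-1}$ is the right object. So instead I set $\xi_n = \mu_n$ and subtract $n + \mathcal{H}_{n-1}$ rather than just $n$: plugging $\mu_k = k + \mathcal{H}_{k-1} + \veps_k$ into the recurrence and using that $\mathcal{H}_{n-1} = \mathcal{H}_n - 1/n$ together with \eqref{eq:sum-S--1} should make the logarithmic part absorb exactly the $O(1/n)$ drift, leaving $\veps_n$ with a genuinely summable (i.e.\ $O(1/n^2)$) increment. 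The existence of $\lim_{n\to+\infty}\veps_n$ then follows because $\sum 1/n^2$ converges, and the two-sided bound $0 < \veps_n - \veps_{n+1} < 1/n^2$ comes from keeping careful track of signs in the explicit remainder (the positivity from the Bell-number correction $B_1 = 1$ contributing with a definite sign, the upper bound from crude estimation of the tail sum against $\sum_{k} S_n(k)/k^2$ via \eqref{eq:sum-S--2}).

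The key computational input will be a precise evaluation of $\sum_{k=1}^{n-1}\frac{A(n,k)}{n!}\big(k + \mathcal{H}_{k-1}\big)$ up to an error $O(1/n^2)$, which requires \eqref{eq:A-(n-k)u} for the linear part and a second-order Abel summation (two applications, exploiting both \eqref{eq:sum-S--1} and \eqref{eq:sum-S--2}) for the harmonic part; the harmonic-sum identity \eqref{eq:sum-S--2} with the $\mathcal{H}_n - \mathcal{H}_{n-m}$ factor is exactly what is needed to see the $1/n^2$ scale emerge. The main obstacle I anticipate is bookkeeping: showing that the $O(1/n)$ contributions from the linear term and from the harmonic term cancel \emph{exactly} (not just asymptotically), so that $\veps_n - \veps_{n+1}$ is truly $O(1/n^2)$ and has the stated sign, rather than $O(1/n)$ with an unknown sign. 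This is precisely why the ansatz includes $\mathcal{H}_{n-1}$ and not some other logarithmic correction — the harmonic number is the unique choice making the first-order terms telescope — and verifying that cancellation is the heart of the proof; once it is done, convergence and the explicit bound \eqref{eq:epsilon-diff} are routine.
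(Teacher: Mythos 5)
Your proposal follows essentially the same route as the paper: substitute the ansatz $\mu_n=n+\mathcal{H}_{n-1}+\veps_n$ into \eqref{eq:mu-original}, Abel-sum each of the three resulting pieces against $S_n(k)$ using \eqref{eq:sum-S-0}--\eqref{eq:sum-S--2}, observe the exact cancellation of the $O(1/n)$ terms leaving $S_n(n-1)(\veps_{n-1}-\veps_n)=\sum_{m=2}^{n-1}\frac{(n-m)!}{m\cdot n!}-\sum_{k=1}^{n-2}S_n(k)(\veps_k-\veps_{k+1})$, and then run an induction on the two-sided bound using the explicit constants for $S_n(n-1)$ and the $\sim\frac{1}{2n^2}$ size of the remainder. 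You correctly identify the exact first-order cancellation as the crux; the only detail you leave implicit is that the base cases of the induction for \eqref{eq:epsilon-diff} (small $n$, up to $200$ in the paper) must be checked numerically.
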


We proceed with an argument analogous to the one used in the proof of Theorem \ref{th:xi-asymp} but with more delicacy.

\begin{proof}
	We begin by rewriting the recurrence \eqref{eq:mu-original} as
	\begin{align}\label{eq:mu-Sigma}
		S_n(n-1)\cdot \mu_n = 1+ \Sigma'_1+\Sigma'_2+\Sigma'_3,
	\end{align}
	where
	\begin{align*}
		\Sigma'_1 &:= \sum_{k=1}^{n-1} \frac{A(n,k)}{n!} k,\\
		\Sigma'_2 &:= \sum_{k=1}^{n-1} \frac{A(n,k)}{n!} \mathcal{H}_{k-1},\\
		\Sigma'_3 &:= \sum_{k=1}^{n-1} \frac{A(n,k)}{n!} \veps_k.
	\end{align*}
	Now we evaluate each sum separately by the Abel summation formula.
	
	First, it is clear by \eqref{eq:sum-S-0} that $\Sigma'_1$ equals
	\begin{align}\label{eq:Sigma1-new}
		\Sigma'_1 = \sum_{k=1}^{n-1} \frac{A(n,k)}{n!} k
		= S_n(n-1) \cdot n - \sum_{k=1}^{n-1} S_n(k)
		=S_n(n-1) \cdot n - 1 + \frac{1}{n}.
	\end{align}
	For $\Sigma'_2$, we have, with \eqref{eq:sum-S--1} utilized, that
	\begin{align}\label{eq:Sigma2-new}
		\Sigma'_2 &= \sum_{k=1}^{n-1} \frac{A(n,k)}{n!} \mathcal{H}_{k-1}\notag\\
		&= S_n(n-1) \cdot \mathcal{H}_{n-1} - \sum_{k=1}^{n-1} S_n(k)\cdot \frac{1}{k}\notag\\
		&= S_n(n-1) \cdot \mathcal{H}_{n-1} - \sum_{m=1}^{n-1}\frac{(n-m)!}{m\cdot n!}.
	\end{align}
	Finally, for $\Sigma'_3$, we find that
	\begin{align}\label{eq:Sigma3-new}
		\Sigma'_3 = \sum_{k=1}^{n-1} \frac{A(n,k)}{n!} \veps_k= S_n(n-1) \cdot \veps_{n-1} + \sum_{k=1}^{n-2} S_n(k)\cdot \big(\veps_k-\veps_{k+1}\big).
	\end{align}
	Substituting \eqref{eq:Sigma1-new}, \eqref{eq:Sigma2-new} and \eqref{eq:Sigma3-new} into \eqref{eq:mu-Sigma}, we have
	\begin{align}\label{eq:epsilon-diff-level-2}
		S_n(n-1)\cdot \big(\veps_{n-1}-\veps_n\big) = \sum_{m=2}^{n-1}\frac{(n-m)!}{m\cdot n!} - \sum_{k=1}^{n-2} S_n(k)\cdot \big(\veps_k-\veps_{k+1}\big).
	\end{align}
	
	It is plain that the following asymptotic relations hold:
	\begin{enumerate}[label=\textbf{(\alph*)~},leftmargin=*,labelsep=0cm,align=left,itemsep=6pt]
		\item $\displaystyle S_n(n-1)\sim 1-e^{-1}$;
		
		\item $\displaystyle \sum_{m=2}^{n-1}\frac{(n-m)!}{m\cdot n!}\sim \frac{1}{2n^2}$;
		
		\item $\displaystyle \sum_{t=1}^n S_n(t)\cdot \frac{1}{t^2}\sim \frac{2}{n^2}$.
	\end{enumerate}
	Here \textbf{(a)} is \eqref{eq:Sn(n-1)-asymp}. For \textbf{(b)}, we note that the left-hand side is dominated by the summand with $m=2$. For relation \textbf{(c)}, we shall recall \eqref{eq:sum-S--2}. More precisely, we have explicit bounds: \textit{For $n\ge 200$},
	\begin{enumerate}[label=\textbf{(\alph*')~},leftmargin=*,labelsep=0cm,align=left,itemsep=6pt]
		\item $\displaystyle 0.63 < S_n(n-1)< 0.64$;
  
		\item $\displaystyle \frac{0.49}{n^2} < \sum_{m=2}^{n-1}\frac{(n-m)!}{m\cdot n!}< \frac{0.51}{n^2}$.
	\end{enumerate}
    Here \textbf{(a')} was already given in \eqref{eq:Sn(n-1)-bound}, and \textbf{(b')} can be derived by performing some additional computations for the error terms. Meanwhile, we numerically verify that \eqref{eq:epsilon-diff} is valid whenever $2\le n\le 200$. So throughout we inductively assume \eqref{eq:epsilon-diff} for $2,\ldots, n-1$ with a certain $n> 200$, and we shall prove \eqref{eq:epsilon-diff} for $n$. Noting that $\veps_1-\veps_2 = (-1)-(-1)=0$, we have, with our inductive assumption in mind, that
	\begin{align*}
		0< \sum_{k=1}^{n-2} S_n(k) \big(\veps_k-\veps_{k+1}\big)< \sum_{k=1}^{n-2} S_n(k)\cdot \frac{1}{k^2}.
	\end{align*}
	Recall that $S_n(n)=1$ and that $S_n(n-1)\sim 1-e^{-1}$. Consulting the above asymptotic relation \textbf{(c)}, we carry out some extra computations and find that
	\begin{enumerate}[label=\textbf{(\alph*')~},leftmargin=*,labelsep=0cm,align=left,itemsep=6pt]
		\setcounter{enumi}{2}
		
		\item $\displaystyle 0< \sum_{k=1}^{n-2} S_n(k) \big(\veps_k-\veps_{k+1}\big)<\frac{0.47}{n^2}$.
	\end{enumerate}
	Substituting the above inequalities \textbf{(b')} and \textbf{(c')} into \eqref{eq:epsilon-diff-level-2} gives
	\begin{align*}
		\frac{0.02}{n^2} < S_n(n-1)\cdot \big(\veps_{n-1}-\veps_n\big) < \frac{0.51}{n^2}.
	\end{align*}
	Furthering applying the inequality \textbf{(a')} confirms the required bounds:
	\begin{align*}
		0 < \veps_{n-1}-\veps_n < \frac{1}{(n-1)^2}.
	\end{align*}
	Finally, since the sequence $\{\veps_n\}_{n\ge 2}$ is strictly decreasing but bounded from below by $\veps_1-\zeta(2) = -1 -\zeta(2)$, the \emph{monotone convergence theorem} asserts that it has a limit.
\end{proof}

\subsection{Connection with Bell numbers}\label{sec:Bell}

Our explicit formula for $\mu_n$ allows us to build the following connection between the Bell numbers $B_\ell$ and a certain family of weighted moments of $\mu_n-\mu_k$.

\begin{theorem}\label{th:mu-moments-limit}
	Let $\ell_1$ and $\ell_2$ be nonnegative integers. Then as $n\to +\infty$,
	\begin{align}\label{eq:mu-moments-limit}
		\sum_{k=1}^n \frac{A(n,k)}{n!}\big(n-k\big)^{\ell_1}\big(\mu_n-\mu_k\big)^{\ell_2} = B_{\ell_1+\ell_2} + O(n^{-1}),
	\end{align}
	where the big-$O$ term depends only on $\ell_1$ and $\ell_2$.
\end{theorem}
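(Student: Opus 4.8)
The plan is to feed the explicit formula $\mu_n = n + \mathcal{H}_{n-1} + \veps_n$ from Theorem~\ref{th:mu-new-formula} into the weighted sum, expand the $\ell_2$-th power binomially, extract the single dominant term (which is evaluated exactly by Theorem~\ref{th:A-(n-k)u}), and absorb every remaining term into the error term by means of Corollary~\ref{coro:Q-s-bound}. When $\ell_2=0$ the statement is literally Theorem~\ref{th:A-(n-k)u}, so we may assume $\ell_2\ge 1$.

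First I would record a uniform estimate on the discrepancy between $\mu_n-\mu_k$ and $n-k$. Writing
\begin{align*}
	\mu_n - \mu_k = (n-k) + R_{n,k}, \qquad R_{n,k} := \big(\mathcal{H}_{n-1}-\mathcal{H}_{k-1}\big) + \big(\veps_n-\veps_k\big),
\end{align*}
one has $\mathcal{H}_{n-1}-\mathcal{H}_{k-1} = \sum_{j=k}^{n-1} j^{-1} \le (n-k)/k$, and, by \eqref{eq:epsilon-diff}, $\big|\veps_n - \veps_k\big| = \sum_{j=k}^{n-1}\big(\veps_j-\veps_{j+1}\big) \le \sum_{j=k}^{n-1} j^{-2} \le (n-k)/k^2$ for $k\ge 2$, the case $k=1$ being disposed of separately using $\veps_1=\veps_2$ together with the crude bound $\mathcal{H}_{n-1}\le n-1$. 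Hence there is an absolute constant $C_0$ such that
\begin{align*}
	\big|R_{n,k}\big| \le C_0\,\frac{n-k}{k} \quad (1\le k\le n-1), \qquad R_{n,n}=0.
\end{align*}

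Next I would expand $(\mu_n-\mu_k)^{\ell_2} = \sum_{i=0}^{\ell_2}\binom{\ell_2}{i}(n-k)^i R_{n,k}^{\ell_2-i}$ and split the sum in \eqref{eq:mu-moments-limit} accordingly. The term $i=\ell_2$ produces $\sum_{k=1}^n \frac{A(n,k)}{n!}(n-k)^{\ell_1+\ell_2}$, which, once $n\ge \ell_1+\ell_2$, equals $B_{\ell_1+\ell_2} - (B_{\ell_1+\ell_2+1}-B_{\ell_1+\ell_2})/n = B_{\ell_1+\ell_2} + O(n^{-1})$ by Theorem~\ref{th:A-(n-k)u}. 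For each $0\le i\le \ell_2-1$ the summand at $k=n$ vanishes (since $\ell_2-i\ge 1$ and $R_{n,n}=0$), while for $1\le k\le n-1$ the bound above gives $(n-k)^{\ell_1+i}\big|R_{n,k}\big|^{\ell_2-i} \le C_0^{\ell_2-i}\,(n-k)^{\ell_1+\ell_2}\,k^{-(\ell_2-i)}$, so Corollary~\ref{coro:Q-s-bound} with $\ell=\ell_1+\ell_2$ and $s=-(\ell_2-i)\le -1$ yields
\begin{align*}
	\sum_{k=1}^{n-1}\frac{A(n,k)}{n!}(n-k)^{\ell_1+i}\big|R_{n,k}\big|^{\ell_2-i} = O\big(n^{-(\ell_2-i)}\big) = O(n^{-1}).
\end{align*}
Summing over the finitely many values of $i$, with binomial weights depending only on $\ell_2$, produces \eqref{eq:mu-moments-limit} with implied constant depending only on $\ell_1$ and $\ell_2$.

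The argument is essentially routine once Theorem~\ref{th:mu-new-formula} is available; the only points that need care are the uniform-in-$k$ estimate for $R_{n,k}$, in particular its behaviour at the endpoints $k=1$ and $k=n$, and the observation that every cross term ($i<\ell_2$) in the binomial expansion carries at least one factor $k^{-1}$, which is precisely what upgrades Corollary~\ref{coro:Q-s-bound} from an $o(1)$ bound to the required $O(n^{-1})$.
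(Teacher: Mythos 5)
Your proposal is correct and follows essentially the same route as the paper: the paper packages your bound on $R_{n,k}$ as the two-sided estimate $n-k\le \mu_n-\mu_k\le (n-k)(1+\tfrac1k)$ (Lemma~\ref{le:mun-muk}, proved from \eqref{eq:mu-new-formula} and \eqref{eq:epsilon-diff} exactly as you do), then squeezes the sum between $\sum_k \frac{A(n,k)}{n!}(n-k)^{\ell_1+\ell_2}$ and $\sum_k \frac{A(n,k)}{n!}(n-k)^{\ell_1+\ell_2}(1+\tfrac1k)^{\ell_2}$, handling the main term via Theorem~\ref{th:A-(n-k)u} and the $k^{-s}$ cross terms via Corollary~\ref{coro:Q-s-bound}. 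Your additive decomposition with a binomial expansion of $(\mu_n-\mu_k)^{\ell_2}$ is an equivalent implementation of the same idea.
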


For this purpose, we first need to bound $\mu_n-\mu_k$.

\begin{lemma}\label{le:mun-muk}
	Let $n\ge 1$. For any $k$ such that $1\le k\le n$,
	\begin{align}\label{eq:mun-muk}
		n-k \le \mu_n-\mu_k \le (n-k)\big(1+\tfrac{1}{k}\big).
	\end{align}
\end{lemma}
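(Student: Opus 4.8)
The plan is to prove both inequalities in \eqref{eq:mun-muk} simultaneously by induction on $n$, fixing $k$ and using the recurrence \eqref{eq:mu-original} rewritten as $S_n(n-1)\mu_n = 1 + \sum_{j=1}^{n-1}\frac{A(n,j)}{n!}\mu_j$. The base cases ($n=k$, where the bounds read $0\le 0\le 0$, and small $n$ using $\mu_1=0$, $\mu_2=2$) are immediate. For the inductive step, I would subtract $\mu_k$ from both sides after distributing $S_n(n-1)$, writing $S_n(n-1)(\mu_n-\mu_k) = 1 - S_n(n-1)\mu_k + \sum_{j=1}^{n-1}\frac{A(n,j)}{n!}\mu_j$, and then split the sum at $j=k$. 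For $j\ge k$ the inductive hypothesis gives $\mu_j-\mu_k \ge j-k$ (and the matching upper bound); for $j<k$ I instead need a bound on $\mu_k - \mu_j$, which the inductive hypothesis also supplies since $j,k < n$. The key simplification is that, by \eqref{sum=1}, $\sum_{j=1}^{n}\frac{A(n,j)}{n!} = 1$ and $\sum_{j=1}^{n-1}\frac{A(n,j)}{n!} = S_n(n-1)$, so the terms $1$ and $-S_n(n-1)\mu_k$ combine cleanly with the sum after recentering each $\mu_j$ around $\mu_k$.

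Concretely, the identity to exploit is
\begin{align*}
	S_n(n-1)(\mu_n-\mu_k) = 1 + \sum_{j=1}^{n-1}\frac{A(n,j)}{n!}(\mu_j-\mu_k).
\end{align*}
For the lower bound: bound each $\mu_j-\mu_k$ from below by $j-k$ (valid for all $1\le j\le n-1$ by the inductive hypothesis, whether $j\ge k$ or $j<k$), giving $S_n(n-1)(\mu_n-\mu_k) \ge 1 + \sum_{j=1}^{n-1}\frac{A(n,j)}{n!}(j-k)$. Using $\sum_{j=1}^{n-1}\frac{A(n,j)}{n!}j = S_n(n-1)\,n - 1 + \frac1n$ from \eqref{eq:Sigma1-new} and $\sum_{j=1}^{n-1}\frac{A(n,j)}{n!} = S_n(n-1)$, the right-hand side becomes $1 + S_n(n-1)n - 1 + \frac1n - S_n(n-1)k = S_n(n-1)(n-k) + \frac1n > S_n(n-1)(n-k)$, and dividing by $S_n(n-1)$ yields $\mu_n-\mu_k > n-k$, which is stronger than what is claimed. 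For the upper bound: bound each $\mu_j-\mu_k$ from above by $(j-k)(1+\frac1k)$ when $j\ge k$, and by $(j-k)(1+\frac1j) \le 0$ (in fact $\le j-k$) when $j<k$; since $j-k<0$ in the latter range, using the weaker bound $j-k$ there only helps. I then need $\sum_{j=1}^{n-1}\frac{A(n,j)}{n!}(j-k)(1+\frac1k)$ plus the $1$, and the extra factor $(1+\frac1k)$ together with the $+\frac1n$ slack from the power-sum identity must absorb the $1$; this is where the careful bookkeeping lives.

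The main obstacle is making the upper-bound step tight enough: the inductive hypothesis gives $\mu_j-\mu_k \le (j-k)(1+\frac1k)$ only for $j\ge k$, while for $j<k$ one has $\mu_k-\mu_j \ge k-j$, i.e.\ $\mu_j-\mu_k \le -(k-j) = j-k$, which is \emph{smaller} than $(j-k)(1+\frac1k)$ precisely because $j-k<0$ — so replacing the true value by $(j-k)(1+\frac1k)$ across the whole range is not automatically an over-estimate and must be checked termwise. The clean way around this is to bound $\mu_j - \mu_k \le (j-k)\bigl(1+\tfrac1k\bigr)$ uniformly for all $j$ (it holds for $j\ge k$ by hypothesis, and for $j<k$ because then $(j-k)(1+\tfrac1k) \ge j-k \ge \mu_j-\mu_k$ reverses favorably), so that
\begin{align*}
	S_n(n-1)(\mu_n-\mu_k) \le 1 + \Bigl(1+\tfrac1k\Bigr)\sum_{j=1}^{n-1}\frac{A(n,j)}{n!}(j-k) = 1 + \Bigl(1+\tfrac1k\Bigr)\Bigl(S_n(n-1)(n-k) - 1 + \tfrac1n\Bigr).
\end{align*}
It then remains to verify $1 + (1+\tfrac1k)(-1+\tfrac1n) \le \tfrac1k S_n(n-1)(n-k)$, i.e.\ that the leftover constant is dominated by the $\tfrac1k$-fraction of $S_n(n-1)(n-k)$; since $-1+\tfrac1n \le 0$ this reduces to checking $\tfrac1k - \tfrac1{kn} \le \tfrac1k S_n(n-1)(n-k)$, equivalently $1 - \tfrac1n \le S_n(n-1)(n-k)$, which holds whenever $n-k\ge 1$ because $S_n(n-1)$ is bounded below (by \eqref{eq:Sn(n-1)-bound}, or crudely by $S_n(n-1)\ge S_n(1)>0$ handled together with small cases). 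Dividing through by $S_n(n-1)$ then gives $\mu_n-\mu_k \le (n-k)(1+\tfrac1k)$, completing the induction.
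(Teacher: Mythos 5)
Your inductive framework and the identity $S_n(n-1)(\mu_n-\mu_k) = 1 + \sum_{j=1}^{n-1}\frac{A(n,j)}{n!}(\mu_j-\mu_k)$ are sound, but both of your termwise bounds on the range $j<k$ have their inequalities reversed, and that is exactly where the argument breaks. For the lower bound: when $j<k$, the inductive hypothesis applied to the pair $(j,k)$ gives $\mu_k-\mu_j\ge k-j$, hence $\mu_j-\mu_k\le j-k$ --- the \emph{opposite} of the bound $\mu_j-\mu_k\ge j-k$ you assert; what the hypothesis actually supplies is only $\mu_j-\mu_k\ge (j-k)\bigl(1+\tfrac1j\bigr)$, which lies strictly below $j-k$. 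For the upper bound: since $j-k<0$ and $1+\tfrac1k>1$, one has $(j-k)\bigl(1+\tfrac1k\bigr)<j-k$, not $\ge$, so the ``favorable reversal'' you invoke is false and the uniform bound $\mu_j-\mu_k\le(j-k)\bigl(1+\tfrac1k\bigr)$ for $j<k$ does not follow from the hypothesis (which only yields $\mu_j-\mu_k\le j-k$, the larger quantity). In both directions the $j<k$ terms therefore leave a deficit of order $\sum_{j<k}\frac{A(n,j)}{n!}\cdot\frac{k-j}{j}$, and this is not negligible: using $\frac{A(n,n-m)}{n!}\approx\frac{n-m+1}{m!\,e\,n}$ one finds the deficit is comparable to the only slack available, namely the $\tfrac1n$ in $\sum_{j=1}^{n-1}\frac{A(n,j)}{n!}\,j = S_n(n-1)\,n-1+\tfrac1n$. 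Closing the induction would thus require sharp explicit constants for such weighted sums, well beyond the $O(n^{-1})$ statement of Corollary \ref{coro:Q-s-bound}, and it is not clear the constants work out.

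The paper takes an entirely different and much shorter route: the lemma is deduced from the explicit formula of Theorem \ref{th:mu-new-formula}, by writing $\mu_n-\mu_k=(n-k)+\sum_{j=k}^{n-1}\tfrac1j-\sum_{j=k}^{n-1}(\veps_j-\veps_{j+1})$ and then using $0\le\veps_j-\veps_{j+1}\le\tfrac{1}{j^2}\le\tfrac1j$ for the lower bound and $\sum_{j=k}^{n-1}\tfrac1j\le\tfrac{n-k}{k}$ for the upper bound. If you want to keep your recurrence-based approach, you would at minimum need to strengthen the induction hypothesis so that it controls the $j<k$ terms from the correct side.
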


\begin{proof}
	Recall from \eqref{eq:mu-new-formula},
    \begin{align*}
		\mu_n-\mu_k = (n-k) + \sum_{j=k}^{n-1}\frac{1}{j} - \sum_{j=k}^{n-1}\big(\veps_j-\veps_{j+1}\big).
	\end{align*}
    It will be sufficient to bound the two summations on the right-hand side of the above. In view of \eqref{eq:epsilon-diff}, it is clear that
	\begin{align*}
		0\le \sum_{j=k}^{n-1}\big(\veps_j-\veps_{j+1}\big)\le \sum_{j=k}^{n-1}\frac{1}{j^2}. 
	\end{align*}
	Thus, 
	\begin{align*}
		\sum_{j=k}^{n-1}\frac{1}{j} - \sum_{j=k}^{n-1}\big(\veps_j-\veps_{j+1}\big) \ge \sum_{j=k}^{n-1}\frac{1}{j} - \sum_{j=k}^{n-1}\frac{1}{j^2}\ge 0.
	\end{align*}
	Meanwhile,
	\begin{align*}
		\sum_{j=k}^{n-1}\frac{1}{j} - \sum_{j=k}^{n-1}\big(\veps_j-\veps_{j+1}\big) \le \sum_{j=k}^{n-1}\frac{1}{j} \le \frac{n-k}{k}.
	\end{align*}
	The claimed inequalities \eqref{eq:mun-muk} then follow.
\end{proof}

Now we complete the proof of Theorem \ref{th:mu-moments-limit}.

\begin{proof}[Proof of Theorem \ref{th:mu-moments-limit}]
	By virtue of Lemma \ref{le:mun-muk}, we know that for $n\ge 1$,
	\begin{align*}
		\sum_{k=1}^n \frac{A(n,k)}{n!}\big(n-k\big)^{\ell_1}\big(\mu_n-\mu_k\big)^{\ell_2}\ge \sum_{k=1}^n \frac{A(n,k)}{n!}\big(n-k\big)^{\ell_1+\ell_2}
	\end{align*}
	and
	\begin{align*}
		\sum_{k=1}^n \frac{A(n,k)}{n!}\big(n-k\big)^{\ell_1}\big(\mu_n-\mu_k\big)^{\ell_2}\le \sum_{k=1}^n \frac{A(n,k)}{n!}\big(n-k\big)^{\ell_1+\ell_2}\big(1+\tfrac{1}{k}\big)^{\ell_2}.
	\end{align*}
	Recalling from \eqref{eq:A-(n-k)u},
	\begin{align}\label{eq:limit-1}
		\sum_{k=1}^n \frac{A(n,k)}{n!}(n-k)^{\ell_1+\ell_2} = B_{\ell_1+\ell_2} + O(n^{-1}),
	\end{align}
	it is sufficient to show that we also have
	\begin{align}\label{eq:limit-1+1/k}
		\sum_{k=1}^n \frac{A(n,k)}{n!}\big(n-k\big)^{\ell_1+\ell_2}\big(1+\tfrac{1}{k}\big)^{\ell_2} = B_{\ell_1+\ell_2} + O(n^{-1}).
	\end{align}
	To see this, we expand $\big(1+\frac{1}{k}\big)^{\ell_2}$ as $1+\sum_{s=1}^{\ell_2} \binom{\ell_2}{s}\frac{1}{k^s}$, and apply \eqref{eq:Q-s-bound} to get
	\begin{align*}
		\sum_{s=1}^{\ell_2} \binom{\ell_2}{s}\sum_{k=1}^n \frac{A(n,k)}{n!}(n-k)^{\ell_1+\ell_2} \frac{1}{k^s} = O(n^{-1}).
	\end{align*}
	Thus,
	\begin{align*}
		\sum_{k=1}^n \frac{A(n,k)}{n!}\big(n-k\big)^{\ell_1+\ell_2}\big(1+\tfrac{1}{k}\big)^{\ell_2} = \sum_{k=1}^n \frac{A(n,k)}{n!}(n-k)^{\ell_1+\ell_2} + O(n^{-1}),
	\end{align*}
	which gives \eqref{eq:limit-1+1/k} by recalling \eqref{eq:limit-1}.
\end{proof}

%Taking $\ell=1,2,3$ in Theorem \ref{th:mu-moments-limit}, respectively, the following relations are true:

\begin{example}
	We have
	\begin{align}
		\sum_{k=1}^n \frac{A(n,k)}{n!}\big(\mu_n-\mu_k\big) &= 1 + O(n^{-1}),\label{eq:mu-1st-moments-limit}\\
		\sum_{k=1}^n \frac{A(n,k)}{n!}\big(\mu_n-\mu_k\big)^2 &= 2 + O(n^{-1}),\\
		\sum_{k=1}^n \frac{A(n,k)}{n!}\big(\mu_n-\mu_k\big)^3 &= 5 + O(n^{-1}).
	\end{align}
\end{example}

\begin{remark}
We may further elaborate on \eqref{eq:mu-1st-moments-limit}. That is, by virtue of  \eqref{eq:mu-original},  it is immediately clear that for $n\geq 2$, 
	\begin{align}\label{eq:mu-1st-moments-limit-new}
		\sum_{k=1}^n \frac{A(n,k)}{n!}\big(\mu_n-\mu_k\big) = 1.
	\end{align}
\end{remark}

%\begin{remark}
%	Based on the analysis in the proof of Theorem \ref{th:mu-moments-limit}, we have the following explicit bounds after routine computations: \textit{For $n\ge 1$},
%	\begin{align}
%		\left|\sum_{k=1}^n \frac{A(n,k)}{n!}\big(\mu_n-\mu_k\big)^2 - 2\right|&< \frac{15}{n},\label{eq:bound-weighted-2nd}\\
%		\left|\sum_{k=1}^n \frac{A(n,k)}{n!}\big(\mu_n-\mu_k\big)^3 - 5\right|&< \frac{120}{n}.
%	\end{align}
%\end{remark}

The argument for Theorem \ref{th:mu-moments-limit} also works, mutatis mutandis, for the following estimate by a direct application of \eqref{eq:Q-s-bound}.

\begin{theorem}
	Let $\ell_1$ and $\ell_2$ be nonnegative integers. Then as $n\to +\infty$,
	\begin{align}\label{eq:mu-moments-limit-k-1}
		\sum_{k=1}^n \frac{A(n,k)}{n!}\big(n-k\big)^{\ell_1}\big(\mu_n-\mu_k\big)^{\ell_2}\frac{1}{k} = O(n^{-1}).
	\end{align}
	Here the big-$O$ term depends only on $\ell_1$ and $\ell_2$.
\end{theorem}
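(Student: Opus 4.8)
The plan is to follow the proof of Theorem \ref{th:mu-moments-limit} verbatim, but substitute the quantitative estimate \eqref{eq:Q-s-bound} for the exact evaluation \eqref{eq:A-(n-k)u} at the key step. First I would invoke Lemma \ref{le:mun-muk}: for $1\le k\le n$ the inequalities \eqref{eq:mun-muk} give $0\le n-k\le \mu_n-\mu_k\le (n-k)\bigl(1+\tfrac1k\bigr)$, so every summand on the left-hand side of \eqref{eq:mu-moments-limit-k-1} is nonnegative, and it suffices to bound the sum from above. Applying the upper bound in \eqref{eq:mun-muk} to the factor $(\mu_n-\mu_k)^{\ell_2}$ yields
\[
	0\le \sum_{k=1}^n \frac{A(n,k)}{n!}(n-k)^{\ell_1}(\mu_n-\mu_k)^{\ell_2}\frac1k \le \sum_{k=1}^n \frac{A(n,k)}{n!}(n-k)^{\ell_1+\ell_2}\Bigl(1+\tfrac1k\Bigr)^{\ell_2}\frac1k.
\]

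Next I would expand $\bigl(1+\tfrac1k\bigr)^{\ell_2}=\sum_{s=0}^{\ell_2}\binom{\ell_2}{s}k^{-s}$ and interchange the two finite summations, so that the right-hand side becomes a linear combination, with coefficients depending only on $\ell_2$, of the quantities
\[
	\sum_{k=1}^n \frac{A(n,k)}{n!}(n-k)^{\ell_1+\ell_2}\,k^{-(s+1)},\qquad 0\le s\le \ell_2.
\]
Each such sum is of exactly the form treated in Corollary \ref{coro:Q-s-bound}, with the nonnegative integer $\ell_1+\ell_2$ playing the role of $\ell$ and the negative real number $-(s+1)$ playing the role of $s$; hence it equals $O\bigl(n^{-(s+1)}\bigr)=O(n^{-1})$, with implied constant depending only on $\ell_1$ and $\ell_2$. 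Adding the at most $\ell_2+1$ contributions, the upper bound is $O(n^{-1})$, and combined with the nonnegativity noted above this establishes \eqref{eq:mu-moments-limit-k-1}.

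The only detail requiring a word of care is the boundary index $k=n$, where $n-k=0$ while $\tfrac1k=\tfrac1n$: if $\ell_1+\ell_2\ge 1$ this term vanishes, and if $\ell_1+\ell_2=0$ it equals $\tfrac{A(n,n)}{n!}\cdot\tfrac1n$, which is $O(n^{-1})$ since $0\le A(n,n)/n!\le 1$ by \eqref{sum=1}. In any case this is already subsumed in the statement of Corollary \ref{coro:Q-s-bound}, so no separate treatment is needed. I do not anticipate any genuine obstacle: once Lemma \ref{le:mun-muk} has reduced the weighted moment to ordinary power sums of $A(n,k)/n!$, the bound \eqref{eq:Q-s-bound} does all the work, which is precisely the content of the ``mutatis mutandis'' remark preceding the statement.
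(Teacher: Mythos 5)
Your proposal is correct and is precisely the argument the paper intends by its ``mutatis mutandis'' remark: bound $(\mu_n-\mu_k)^{\ell_2}$ via Lemma \ref{le:mun-muk}, expand $(1+\tfrac1k)^{\ell_2}$, and apply Corollary \ref{coro:Q-s-bound} with exponent $-(s+1)\le -1$ to each resulting sum. No gaps.
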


Finally, we derive the following result from Theorem \ref{th:mu-moments-limit}.

\begin{corollary}\label{coro:mu-moments-limit-new}
	Let $\ell_1$ and $\ell_2$ be nonnegative integers. Then as $n\to +\infty$,
	\begin{align}\label{eq:mu-moments-limit-new}
		\sum_{k=1}^n \frac{A(n,k)}{n!}k^{\ell_1}\big(\mu_n-\mu_k\big)^{\ell_2} = B_{\ell_2} n^{\ell_1} + O(n^{\ell_1-1}).
	\end{align}
	Here the big-$O$ term depends only on $\ell_1$ and $\ell_2$.
\end{corollary}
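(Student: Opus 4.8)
The plan is to reduce the claim directly to Theorem \ref{th:mu-moments-limit} by expanding the factor $k^{\ell_1}$ around $n$. Writing $k = n-(n-k)$ and applying the binomial theorem gives
\[
k^{\ell_1} = \sum_{j=0}^{\ell_1} \binom{\ell_1}{j}(-1)^j n^{\ell_1-j}(n-k)^j,
\]
so that, interchanging the finite sum over $j$ with the sum over $k$,
\[
\sum_{k=1}^n \frac{A(n,k)}{n!}k^{\ell_1}\big(\mu_n-\mu_k\big)^{\ell_2} = \sum_{j=0}^{\ell_1}\binom{\ell_1}{j}(-1)^j n^{\ell_1-j} \sum_{k=1}^n \frac{A(n,k)}{n!}(n-k)^j\big(\mu_n-\mu_k\big)^{\ell_2}.
\]

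Next I would invoke Theorem \ref{th:mu-moments-limit} with the pair of parameters $(j,\ell_2)$ in place of $(\ell_1,\ell_2)$, which yields
\[
\sum_{k=1}^n \frac{A(n,k)}{n!}(n-k)^j\big(\mu_n-\mu_k\big)^{\ell_2} = B_{j+\ell_2} + O(n^{-1}),
\]
where the implied constant depends only on $j$ and $\ell_2$. Since $j$ ranges over the finitely many values $0,1,\ldots,\ell_1$, these constants can be absorbed into a single one depending only on $\ell_1$ and $\ell_2$.

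Substituting this into the displayed identity, the term $j=0$ contributes $n^{\ell_1}\big(B_{\ell_2}+O(n^{-1})\big) = B_{\ell_2}\,n^{\ell_1} + O(n^{\ell_1-1})$, while each term with $1\le j\le \ell_1$ contributes $\binom{\ell_1}{j}(-1)^j n^{\ell_1-j}\big(B_{j+\ell_2}+O(n^{-1})\big) = O(n^{\ell_1-j}) = O(n^{\ell_1-1})$. Adding the finitely many contributions gives \eqref{eq:mu-moments-limit-new}. There is essentially no genuine obstacle here; the only point meriting a word of care is that the error in Theorem \ref{th:mu-moments-limit} be uniform over the finitely many exponents $j$, which is automatic because it depends only on $j$ and $\ell_2$.
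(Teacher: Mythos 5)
Your proposal is correct and is essentially identical to the paper's own proof: both expand $k^{\ell_1}=\big(n-(n-k)\big)^{\ell_1}$ by the binomial theorem and then apply Theorem \ref{th:mu-moments-limit} termwise, with the $j=0$ term giving the main contribution and the remaining terms absorbed into $O(n^{\ell_1-1})$. Nothing further is needed.
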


\begin{proof}
	When $\ell_1=0$, the result is exactly a specialization of Theorem \ref{th:mu-moments-limit}. In what follows, we assume that $\ell_1\ge 1$ and expand $k^{\ell_1}$ as
	\begin{align}\label{eq:expansion-k}
		k^{\ell_1} = \big(n-(n-k)\big)^{\ell_1} = n^{\ell_1} + \sum_{s=1}^{\ell_1} \binom{\ell_1}{s} (-1)^s \big(n-k\big)^s n^{\ell_1-s}.
	\end{align}
	In light of Theorem \ref{th:mu-moments-limit},
	\begin{align*}
		n^{\ell_1}\cdot \sum_{k=1}^n \frac{A(n,k)}{n!}\big(\mu_n-\mu_k\big)^{\ell_2} = B_{\ell_2} n^{\ell_1} + O(n^{\ell_1-1}).
	\end{align*}
	Also, for each $s$ with $1\le s\le \ell_1$, 
	\begin{align*}
		n^{\ell_1-s}\cdot \sum_{k=1}^n \frac{A(n,k)}{n!}\binom{\ell_1}{s} (-1)^s \big(n-k\big)^s\big(\mu_n-\mu_k\big)^{\ell_2} = O(n^{\ell_1-s}).
	\end{align*}
	The desired result then follows.
\end{proof}

\subsection{Other estimates}

Herein we establish a few more estimates for later use. All asymptotic relations in this subsection depend only on $L$ and $M$.

\begin{lemma}\label{le:mu-moments-limit-LM}
	Let $L\ge 0$ and $M\ge 1$ be integers. Then as $n\to +\infty$,
	\begin{align}\label{eq:mu-moments-limit-LM}
		\sum_{k=1}^n \frac{A(n,k)}{n!}k^{L}\big(1+\mu_k-\mu_n\big)^{M} = \left(\sum_{m=0}^M \binom{M}{m}(-1)^m B_m\right) n^{L} + O(n^{L-1}).
	\end{align}
\end{lemma}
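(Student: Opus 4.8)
The plan is to expand $\big(1+\mu_k-\mu_n\big)^M$ by the binomial theorem and reduce to the estimates already established. Writing $1 + \mu_k - \mu_n = 1 - (\mu_n - \mu_k)$, we have
\begin{align*}
    \big(1+\mu_k-\mu_n\big)^M = \sum_{m=0}^M \binom{M}{m}(-1)^m \big(\mu_n-\mu_k\big)^m,
\end{align*}
so that
\begin{align*}
    \sum_{k=1}^n \frac{A(n,k)}{n!}k^{L}\big(1+\mu_k-\mu_n\big)^{M} = \sum_{m=0}^M \binom{M}{m}(-1)^m \sum_{k=1}^n \frac{A(n,k)}{n!}k^{L}\big(\mu_n-\mu_k\big)^{m}.
\end{align*}
Now each inner sum is handled by Corollary \ref{coro:mu-moments-limit-new} with $\ell_1 = L$ and $\ell_2 = m$: it equals $B_m n^L + O(n^{L-1})$. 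Substituting this in, the main terms collect into $\big(\sum_{m=0}^M \binom{M}{m}(-1)^m B_m\big) n^L$, while the error terms, being finitely many (at most $M+1$ of them, with $M$ fixed), combine into a single $O(n^{L-1})$, since the implied constants depend only on $L$ and $m \le M$, hence only on $L$ and $M$. This is exactly \eqref{eq:mu-moments-limit-LM}.

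There is essentially no obstacle here; the lemma is a bookkeeping corollary of Corollary \ref{coro:mu-moments-limit-new}. The only point requiring a word of care is that Corollary \ref{coro:mu-moments-limit-new} is stated for nonnegative integers $\ell_1, \ell_2$, which is satisfied since $L \ge 0$ and each $m$ in the sum runs over $0, 1, \ldots, M$ with $M \ge 1$ a fixed integer; and that treating $0^0$ as $1$ (per the paper's convention) keeps the $m=0$ term consistent with $B_0 = 1$. One could alternatively note that the coefficient $\sum_{m=0}^M \binom{M}{m}(-1)^m B_m$ admits a closed form via the umbral/exponential-generating-function identity $\sum_{M\ge 0} \big(\sum_{m=0}^M \binom{M}{m}(-1)^m B_m\big)\frac{x^M}{M!} = e^x \cdot e^{e^{-x}-1}$, but this is not needed for the statement as written.
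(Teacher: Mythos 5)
Your proof is correct and is exactly the paper's argument: the paper also expands $\big(1+\mu_k-\mu_n\big)^M=\sum_{m=0}^M\binom{M}{m}(-1)^m\big(\mu_n-\mu_k\big)^m$ and then applies Corollary \ref{coro:mu-moments-limit-new} termwise, absorbing the finitely many error terms into a single $O(n^{L-1})$.
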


\begin{proof}
	We make use of the expansion
	\begin{align}\label{eq:expansion-mu-k-n}
		\big(1+\mu_k-\mu_n\big)^{M} = \sum_{m=0}^M \binom{M}{m}(-1)^m \big(\mu_n-\mu_k\big)^{m},
	\end{align}
	and then apply \eqref{eq:mu-moments-limit-new}.
\end{proof}

\begin{example}
	Recall that
	\begin{align*}
		B_0=1,\qquad B_1=1,\qquad B_2=2,\qquad B_3=5.
	\end{align*}
	The following special cases will be used in the sequel:
	\begin{align}
		\sum_{k=1}^n \frac{A(n,k)}{n!}k^{L}\big(1+\mu_k-\mu_n\big)^{2} &= n^{L} + O(n^{L-1}),\label{eq:mu-moments-limit-L2}\\
		\sum_{k=1}^n \frac{A(n,k)}{n!}k^{L}\big(1+\mu_k-\mu_n\big)^{3} &= - n^{L} + O(n^{L-1}).\label{eq:mu-moments-limit-L3}
	\end{align}
\end{example}

A closer look at the $M = 1$ case of \eqref{eq:mu-moments-limit-LM} reveals that
\begin{align*}
	\sum_{m=0}^1 \binom{1}{m}(-1)^m B_m = B_0 - B_1 = 1 - 1 = 0,
\end{align*}
thereby indicating that the main term in \eqref{eq:mu-moments-limit-LM} vanishes. Hence, we make an elaboration.

\begin{lemma}\label{le:mu-moments-limit-L1}
	Let $L\ge 0$ be an integer. Then as $n\to +\infty$,
	\begin{align}\label{eq:mu-moments-limit-L1}
		\sum_{k=1}^n \frac{A(n,k)}{n!}k^{L}\big(1+\mu_k-\mu_n\big) = L n^{L-1} + O(n^{L-2}).
	\end{align}
\end{lemma}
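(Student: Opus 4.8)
\textbf{Proof proposal for Lemma \ref{le:mu-moments-limit-L1}.}

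The plan is to refine the computation behind Lemma \ref{le:mu-moments-limit-LM} in the degenerate case $M=1$, where the leading coefficient $B_0-B_1$ vanishes, so that the ``next'' term in the asymptotic expansion becomes the main term. Concretely, I would start from the expansion $1+\mu_k-\mu_n = 1-(\mu_n-\mu_k)$ and split the sum as
\begin{align*}
	\sum_{k=1}^n \frac{A(n,k)}{n!}k^L\big(1+\mu_k-\mu_n\big) = \sum_{k=1}^n \frac{A(n,k)}{n!}k^L - \sum_{k=1}^n \frac{A(n,k)}{n!}k^L\big(\mu_n-\mu_k\big).
\end{align*}
The first sum is handled by \eqref{eq:A-(n-k)u} after expanding $k^L = (n-(n-k))^L$; it equals $n^L + O(n^{L-1})$ (indeed the $\ell_2=0$ instance of Corollary \ref{coro:mu-moments-limit-new}). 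So the task reduces to showing
\begin{align*}
	\sum_{k=1}^n \frac{A(n,k)}{n!}k^L\big(\mu_n-\mu_k\big) = n^L - L n^{L-1} + O(n^{L-2}),
\end{align*}
i.e., I need one more order of precision than Corollary \ref{coro:mu-moments-limit-new} (which only gives $B_1 n^L + O(n^{L-1}) = n^L + O(n^{L-1})$) provides.

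The key refinement comes from the exact identity \eqref{eq:mu-new-formula}, $\mu_n-\mu_k = (n-k) + (\mathcal{H}_{n-1}-\mathcal{H}_{k-1}) + (\veps_k-\veps_n)$. Plugging this in and using the expansion \eqref{eq:expansion-k} for $k^L$, the dominant contribution is $\sum_k \frac{A(n,k)}{n!}k^L(n-k)$, which by \eqref{eq:A-(n-k)u} and one binomial expansion gives $n^L\big(B_1 - (B_2-B_1)/n\big) - L n^{L-1} B_1 + O(n^{L-2}) = n^L - n^{L-1} - L n^{L-1} + O(n^{L-2})$; wait — I should be careful here, since I actually want the combined quantity $n^L - \sum_k \tfrac{A(n,k)}{n!}k^L(\mu_n-\mu_k)$, so I would track the $n^{L-1}$ coefficients from both the $k^L$-expansion and the $(n-k)$-vs-$(n-k)^2$ correction in \eqref{eq:A-(n-k)u}, and check they assemble to $Ln^{L-1}$. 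The secondary contributions are: $\sum_k \frac{A(n,k)}{n!}k^L(\mathcal{H}_{n-1}-\mathcal{H}_{k-1})$, which since $\mathcal{H}_{n-1}-\mathcal{H}_{k-1} = \sum_{j=k}^{n-1} 1/j \le (n-k)/k$ and $\ge (n-k)/n$, is $O(n^{L-1})$ but may itself have a nonzero $n^{L-1}$-term — I would need to pin this down, either by bounding it as $o(n^{L-1})$ directly (it is not, in general) or, more carefully, by writing $\mathcal{H}_{n-1}-\mathcal{H}_{k-1}$ and interchanging summation; and $\sum_k \frac{A(n,k)}{n!}k^L(\veps_k-\veps_n)$, which by \eqref{eq:epsilon-diff} satisfies $0\le \veps_k-\veps_n \le \sum_{j=k}^{n-1} j^{-2} \le 1/(k-1)$ roughly, giving via \eqref{eq:Q-s-bound} a bound of $O(n^{L-1})$ at worst — again I must verify this is actually $O(n^{L-2})$ or carefully extract its $n^{L-1}$ piece.

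The main obstacle I anticipate is precisely the bookkeeping of the $n^{L-1}$-order terms: Corollary \ref{coro:mu-moments-limit-new} and the harmonic/$\veps$ remainders are each only controlled to $O(n^{L-1})$, whereas the claim asserts a \emph{specific} coefficient $L$ at that order and an $O(n^{L-2})$ remainder. So the real work is to show that the harmonic-number sum contributes $O(n^{L-1})$ with the right constant (or that, after combining with the main binomial terms, everything but the $-Ln^{L-1}$ cancels or is absorbed), and that the $\veps$-sum is genuinely lower order. I would attack the harmonic sum by the substitution $\mathcal{H}_{n-1}-\mathcal{H}_{k-1} = \sum_{j=k}^{n-1}\frac1j$, swapping the order of summation to get $\sum_{j=1}^{n-1}\frac1j \sum_{k=1}^{j}\frac{A(n,k)}{n!}k^L = \sum_{j=1}^{n-1}\frac1j\,(\text{a partial sum})$, then estimating the inner partial sum via the tail bound implicit in \eqref{eq:A-(n-k)u} (the mass of $A(n,k)/n!$ concentrates near $k=n$, so the partial sum up to $j$ is exponentially small for $j \le (1-\eta)n$ and close to $n^L$ only for $j$ near $n$); this should localize the $j$-sum near $j\approx n$ and yield an $O(n^{L-1})$ bound — and I would check whether its leading term is $o(n^{L-1})$ or needs to be combined with the main term. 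Once the three pieces are reconciled, the stated identity follows by collecting coefficients.
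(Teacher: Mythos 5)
Your overall strategy is workable, but as written the proposal stops exactly where the proof has to happen: you yourself flag that the harmonic-number sum and the $\veps$-sum are each only controlled to $O(n^{L-1})$ and that you ``would need to pin this down,'' and that ``the real work'' is extracting the correct $n^{L-1}$ coefficient. That unresolved bookkeeping is the entire content of the lemma, so the proposal does not yet constitute a proof. The decisive simplification you are missing is to \emph{not} split $1+\mu_k-\mu_n$ into $1$ minus $(\mu_n-\mu_k)$. The paper keeps the factor $\big(1+\mu_k-\mu_n\big)$ intact and expands only $k^L=n^L-Ln^{L-1}(n-k)+\sum_{\ell\ge 2}\binom{L}{\ell}(-1)^\ell(n-k)^\ell n^{L-\ell}$. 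The $n^L$ term then contributes \emph{exactly zero}, because $\sum_{k}\frac{A(n,k)}{n!}\big(1+\mu_k-\mu_n\big)=0$ identically (this is \eqref{eq:mu-moments-limit-01}, a restatement of the recurrence \eqref{eq:mu-1st-moments-limit-new}); the $-Ln^{L-1}(n-k)$ term contributes $-Ln^{L-1}\big(B_1-B_2+O(n^{-1})\big)=Ln^{L-1}+O(n^{L-2})$ directly from Theorem \ref{th:mu-moments-limit} with $(\ell_1,\ell_2)=(1,0)$ and $(1,1)$; and every $\ell\ge 2$ term is $O(n^{L-\ell})=O(n^{L-2})$ by the same theorem. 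No fine structure of $\mu_n-\mu_k$ (harmonic numbers, $\veps_n$) is needed at all.

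For completeness: your route can be pushed through. Writing $\frac1j=\frac1n+\frac{n-j}{jn}$ shows $\mathcal{H}_{n-1}-\mathcal{H}_{k-1}=\frac{n-k}{n}+O\big(\tfrac{(n-k)^2}{kn}\big)$, whence the harmonic sum equals $n^{L-1}B_1+O(n^{L-2})=n^{L-1}+O(n^{L-2})$ by \eqref{eq:A-(n-k)u} and \eqref{eq:Q-s-bound}; the $\veps$-sum is $O(n^{L-2})$ since $0\le\veps_k-\veps_n\le\sum_{j=k}^{n-1}j^{-2}\le(n-k)/k^2$ by \eqref{eq:epsilon-diff}, again via \eqref{eq:Q-s-bound}; and combining these with the two-term expansions of $\sum_k\frac{A(n,k)}{n!}k^L$ and $\sum_k\frac{A(n,k)}{n!}k^L(n-k)$ does yield $Ln^{L-1}+O(n^{L-2})$ after cancellation. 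But this is substantially more labor than the paper's argument, and none of it appears in your write-up — it is precisely the part you deferred. You should either carry out these computations explicitly or switch to the exact-cancellation route above.
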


\begin{proof}
	For the case when $L=0$, we note from \eqref{sum=1} that
	\begin{equation*}
		\sum_{k=1}^n \frac{A(n,k)}{n!} =1,
	\end{equation*}
	and from \eqref{eq:mu-1st-moments-limit-new} that
	\begin{equation*}
		\sum_{k=1}^n \frac{A(n,k)}{n!}\big(\mu_n-\mu_k\big) = 1.
	\end{equation*}
	Thus,
	\begin{align}\label{eq:mu-moments-limit-01}
		\sum_{k=1}^n \frac{A(n,k)}{n!}\big(1+\mu_k-\mu_n\big) = 0.
	\end{align}
	
	Now assume that $L\ge 1$. We again use \eqref{eq:expansion-k} but with one more term taken out:
	\begin{align}\label{eq:expansion-k-new}
		k^L = n^L - L n^{L-1} \big(n-k\big) + \sum_{\ell=2}^{L} \binom{L}{\ell} (-1)^\ell \big(n-k\big)^\ell n^{L-\ell}.
	\end{align}
	As pointed out earlier, the contribution from the $n^L$ term vanishes in \eqref{eq:mu-moments-limit-L1}. Thus,
	\begin{align*}
		\sum_{k=1}^n \frac{A(n,k)}{n!}k^{L}\big(1+\mu_k-\mu_n\big) &= - L n^{L-1}\sum_{k=1}^n \big(n-k\big)\big(1+\mu_k-\mu_n\big) +O(n^{L-2})\\
		&= - L n^{L-1} \big(B_1-B_2+O(n^{-1})\big) +O(n^{L-2})\\
		&= L n^{L-1} + O(n^{L-2}),
	\end{align*}
	as claimed.
\end{proof}

Finally, we have the following result.

\begin{lemma}\label{le:mu-moments-limit-LM-log}
	Let $M\ge 1$ be an integer. For any sequence $f(n)=O(n^L \log n)$ with $L\ge 0$ an integer, we have, as $n\to +\infty$,
	\begin{align}\label{eq:mu-moments-limit-LM-log}
		\sum_{k=1}^n \frac{A(n,k)}{n!}f(k) \big(1+\mu_k-\mu_n\big)^{M} = O(n^{L}\log n).
	\end{align}
\end{lemma}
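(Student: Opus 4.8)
The plan is to mimic the argument for Lemma \ref{le:mu-moments-limit-LM} but track the logarithmic factor carefully, exploiting the fact that the bound on $f$ is only marginally worse than a pure power. First I would expand the binomial power using \eqref{eq:expansion-mu-k-n}, which gives
\begin{align*}
	\sum_{k=1}^n \frac{A(n,k)}{n!}f(k)\big(1+\mu_k-\mu_n\big)^{M} = \sum_{m=0}^M \binom{M}{m}(-1)^m \sum_{k=1}^n \frac{A(n,k)}{n!}f(k)\big(\mu_n-\mu_k\big)^{m},
\end{align*}
so it suffices to show each inner sum is $O(n^L\log n)$, uniformly in $m$ with $0\le m\le M$.

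Next I would bound $|f(k)|\le C k^L \log k \le C k^L \log n$ for $1\le k\le n$ (valid since $\log$ is increasing and $k\le n$), which pulls the whole logarithmic factor out as a constant multiple of $\log n$:
\begin{align*}
	\left|\sum_{k=1}^n \frac{A(n,k)}{n!}f(k)\big(\mu_n-\mu_k\big)^{m}\right| \le C\log n \cdot \sum_{k=1}^n \frac{A(n,k)}{n!}k^{L}\big(\mu_n-\mu_k\big)^{m}.
\end{align*}
Then I invoke Corollary \ref{coro:mu-moments-limit-new} with $\ell_1=L$ and $\ell_2=m$, which yields $\sum_{k=1}^n \frac{A(n,k)}{n!}k^{L}(\mu_n-\mu_k)^{m} = B_m n^L + O(n^{L-1}) = O(n^L)$. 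Multiplying back by $C\log n$ gives $O(n^L\log n)$ for each term, and summing the finitely many terms $m=0,\ldots,M$ preserves the bound. This completes the proof.

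I do not anticipate a genuine obstacle here; the only point requiring minor care is the case $L=0$, where $n^L\log n = \log n$ and one must check that the crude bound $|f(k)|\le C\log n$ together with Corollary \ref{coro:mu-moments-limit-new} (giving $O(1)$) still produces $O(\log n)$, which it does. One should also note that the sign-alternating structure of the binomial expansion, which caused cancellation of the main term in Lemma \ref{le:mu-moments-limit-LM} and Lemma \ref{le:mu-moments-limit-L1}, is irrelevant here since we only need an upper bound of the stated order, not an exact main term; hence no refinement like \eqref{eq:expansion-k-new} is needed. Everything reduces to the triangle inequality plus one application of Corollary \ref{coro:mu-moments-limit-new}.
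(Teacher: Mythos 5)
Your proof is correct and follows essentially the same route as the paper: expand the binomial $(1+\mu_k-\mu_n)^M$, pull the factor $\log n$ out of $f(k)$, and invoke the established weighted moment estimates. The only cosmetic difference is that you keep $k^L$ inside the sum and cite Corollary \ref{coro:mu-moments-limit-new}, whereas the paper bounds $|f(k)|$ by $Cn^L\log n$ outright and uses \eqref{eq:mu-moments-limit} with $\ell_1=0$; both give the same bound.
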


\begin{proof}
    It is clear that
    \begin{align*}
        \sum_{k=1}^n \frac{A(n,k)}{n!}f(k) \big(1+\mu_k-\mu_n\big)^{M} = O\left(n^{L}\log n \sum_{m=0}^M \binom{M}{m} \sum_{k=1}^n \frac{A(n,k)}{n!} \big(\mu_n-\mu_k\big)^{m}\right).
    \end{align*}
    Noting that the inner summation on the right-hand side is $O(1)$ by \eqref{eq:mu-moments-limit}, we arrive at the desired relation.
%	We expand $\big(1+\mu_k-\mu_n\big)^{M}$ as in \eqref{eq:expansion-mu-k-n} and expand $k^L$ according to \eqref{eq:expansion-k-new}:
%	\begin{align*}
%		k^L = \sum_{\ell=0}^{L} \binom{L}{\ell} (-1)^\ell \big(n-k\big)^\ell n^{L-\ell}.
%	\end{align*}
%	Recalling from \eqref{eq:mun-muk} that $\mu_n-\mu_k\ge n-k\ge 0$, we have, for each $\ell$ and $m$,
%	\begin{align*}
%		&\quad\;\left|n^{L-\ell} \cdot \sum_{k=1}^n \frac{A(n,k)}{n!} \big(n-k\big)^\ell \big(\mu_n-\mu_k\big)^{m} \log k\right|\\
%		&\le n^{L-\ell} \log n \cdot \sum_{k=1}^n \frac{A(n,k)}{n!} \big(n-k\big)^\ell \big(\mu_n-\mu_k\big)^{m}\\
%		& = O(n^{L-\ell} \log n).
%	\end{align*}
%	The desired relation then follows.
\end{proof}

\section{Proof of Theorem \ref{th:c-mom}}\label{sec:var-3moment}

Now, we are ready to prove Theorem \ref{th:c-mom}. To do so, we require the following elaboration.

\begin{theorem}\label{th:c-mom-new}
	For every $m\ge 2$, Theorem \ref{th:c-mom} is valid. Furthermore, if we define
	\begin{align*}
		\varepsilon_n^{(m)} := \bbE\big[(X_n-\mu_n)^m\big]- \begin{cases}
			(2M-1)!!\cdot n^M, & \text{if $m=2M$},\\
			\tfrac{2}{3}M(2M+1)!!\cdot n^M, & \text{if $m=2M+1$},
		\end{cases}
	\end{align*}
	then
	\begin{align}\label{eq:mom-error-diff}
		\big|\veps^{(m)}_n-\veps^{(m)}_{n-1}\big| = \begin{cases}
			O(n^{-1}), & \text{if $m=2$ or $3$},\\
			O(n^{\lfloor\frac{m}{2}\rfloor-2}\log n), & \text{if $m\ge 4$}.
		\end{cases}
	\end{align}
\end{theorem}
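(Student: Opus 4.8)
The plan is to induct on $m$, proving \eqref{eq:mom-asymp} together with the difference bound \eqref{eq:mom-error-diff} simultaneously. The base cases $m=2$ and $m=3$ must be handled first by hand. For these, start from \eqref{eq:E-poly} applied to $p(x)=(x-\mu_n)^m$: writing $X_k+1-\mu_n = (X_k-\mu_k)+(1+\mu_k-\mu_n)$ and expanding by the binomial theorem gives
\begin{align*}
	\bbE\big[(X_n-\mu_n)^m\big] = \sum_{j=0}^m \binom{m}{j}\sum_{k=1}^n \frac{A(n,k)}{n!}\bbE\big[(X_k-\mu_k)^j\big]\,(1+\mu_k-\mu_n)^{m-j}.
\end{align*}
The $j=m$ term contains $\bbE[(X_n-\mu_n)^m]$ on the right (the $k=n$ summand) together with $\sum_{k=1}^{n-1}\frac{A(n,k)}{n!}\bbE[(X_k-\mu_k)^m]$; moving the former to the left produces a factor $S_n(n-1)$, and the resulting identity is of the generic form \eqref{eq:rec:xi} with $\xi_k=\bbE[(X_k-\mu_k)^m]$. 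The $j=m-1$ term vanishes because $\bbE[(X_k-\mu_k)^{m-1}]$ paired with the already-known asymptotics must still be tracked, but the genuinely subtle cancellation is that the $j=0$ term $\sum_k \frac{A(n,k)}{n!}(1+\mu_k-\mu_n)^m$ and the lower-$j$ terms combine to give the correct constant $M$ and leading power $L$ for the inhomogeneous sequence $\lambda_n$. For $m=2$: the $j=0$ term is $\sum_k\frac{A(n,k)}{n!}(1+\mu_k-\mu_n)^2 = n^0\cdot 1 + O(n^{-1})$ by \eqref{eq:mu-moments-limit-L2} with $L=0$, the $j=1$ term vanishes by \eqref{eq:mu-moments-limit-01}, so $\lambda_n = 1+O(n^{-1})$, hence $L=0$, $M=1$, and Theorem \ref{th:xi-asymp} gives $\bbE[(X_n-\mu_n)^2]\sim n$, matching $(2\cdot 1-1)!!\cdot n = n$; Theorem \ref{th:xi-asymp-error}(i) then gives \eqref{eq:mom-error-diff} for $m=2$. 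For $m=3$ one similarly gets $\lambda_n$ whose leading term comes from $\binom{3}{1}$ times \eqref{eq:mu-moments-limit-L2}-type input at the $n$-scale minus a correction; the arithmetic should land on $M = \tfrac23\cdot 1\cdot 3!! = 2$ coefficient of $n$ being correct after combining $\eqref{eq:mu-moments-limit-L3}$ and the $j=2$ contribution $3\,\bbE[(X_k-\mu_k)^2](1+\mu_k-\mu_n)$ evaluated via \eqref{eq:mu-moments-limit-L1} with $L=1$.

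For the inductive step, fix $m\ge 4$ and assume \eqref{eq:mom-asymp} and \eqref{eq:mom-error-diff} for all smaller exponents. Using the same binomial expansion as above, isolate $\xi_n := \bbE[(X_n-\mu_n)^m]$ and rewrite the recurrence as
\begin{align*}
	S_n(n-1)\,\xi_n = \lambda_n + \sum_{k=1}^{n-1}\frac{A(n,k)}{n!}\xi_k,
\end{align*}
where $\lambda_n := \sum_{j=0}^{m-1}\binom{m}{j}\sum_{k=1}^n \frac{A(n,k)}{n!}\bbE[(X_k-\mu_k)^j]\,(1+\mu_k-\mu_n)^{m-j}$. The task is to show $\lambda_n = M n^L + \delta_n$ with $L = \lceil m/2\rceil - 1$, with $M$ equal to $(2M'-1)!!\cdot(L+1)$ when $m=2M'$ and $\tfrac23 M'(2M'+1)!!\cdot(L+1)$ when $m=2M'+1$ (so that Theorem \ref{th:xi-asymp} returns exactly the claimed leading term after dividing by $L+1$), and with $\delta_n = O(n^{L-1}\log n)$. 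Each inner sum is estimated by substituting the inductive asymptotics $\bbE[(X_k-\mu_k)^j] = c_j k^{\lfloor j/2\rfloor} + O(k^{\lfloor j/2\rfloor - 1}\log k)$ and then invoking Lemma \ref{le:mu-moments-limit-LM} (for the main term of $(1+\mu_k-\mu_n)^{m-j}$, which carries the Bell-number combination), Lemma \ref{le:mu-moments-limit-L1} (for the cases where the leading Bell combination cancels, namely $m-j=1$), and Lemma \ref{le:mu-moments-limit-LM-log} (to absorb the $\log$-type error pieces $f(k)=O(k^{\lfloor j/2\rfloor-1}\log k)$ into the $\delta_n$ bound). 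One checks the term of largest order: the dominant contribution comes from $j=m-2$, where $\bbE[(X_k-\mu_k)^{m-2}]\sim c_{m-2}k^{\lfloor (m-2)/2\rfloor} = c_{m-2}k^{L}$ is paired with $\binom{m}{2}(1+\mu_k-\mu_n)^2$, and \eqref{eq:mu-moments-limit-L2} gives $\binom m2 c_{m-2} n^L$; one must also track $j=m-1$ (which contributes at order $n^{L}$ or $n^{L-1}$ depending on the parity of $m$, via Lemma \ref{le:mu-moments-limit-L1} with the extra $\mathcal H$-correction killed) and $j=m$ with $k<n$ which simply reproduces $\sum_{k<n}\frac{A(n,k)}{n!}\xi_k$ already on the right side. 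Summing these with the correct binomial weights and the identity $(2M'-1)!!\binom{2M'}{2} = $ (appropriate multiple of) $(2M'-3)!!$ — i.e. the double-factorial recursion — should verify that $M$ has the stated value; this is the combinatorial heart of the verification and is where I expect the main work. Then apply Theorem \ref{th:xi-asymp} to get \eqref{eq:mom-asymp}, and Theorem \ref{th:xi-asymp-error}(ii) (using $L\ge 1$ for $m\ge 4$) to get $|\veps_n^{(m)}-\veps_{n-1}^{(m)}| = O(n^{L-1}\log n) = O(n^{\lfloor m/2\rfloor - 2}\log n)$, since $L-1 = \lceil m/2\rceil - 2 = \lfloor m/2\rfloor - 2$ when... wait, for even $m$, $\lceil m/2\rceil = \lfloor m/2\rfloor = m/2$, so $L - 1 = m/2 - 2 = \lfloor m/2\rfloor - 2$; for odd $m$, $L = (m+1)/2 - 1 = (m-1)/2 = \lfloor m/2\rfloor$, so $L-1 = \lfloor m/2\rfloor - 1$, which is \emph{larger} than $\lfloor m/2\rfloor - 2$ — so for odd $m$ we actually need the sharper input $\delta_n = O(n^{L-1})$ without the log, or we need to reexamine; in fact for odd $m$ the error in \eqref{eq:mom-asymp} is stated as $O(n^{M-1}\log n)$ with $M = \lfloor m/2 \rfloor = L$, consistent, but the \emph{difference} bound wants one power lower, so one must verify that the $O(n^{L-1}\log n)$ pieces entering $\delta_n$ can, for odd $m$, be improved using Lemma \ref{le:mu-moments-limit-L1}'s gain of one power, or handle odd $m$ by a separate refinement of Theorem \ref{th:xi-asymp-error}.

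The main obstacle, as flagged, is the bookkeeping in the inductive step: showing that the Bell-number combinations $\sum_m \binom{M}{m}(-1)^m B_m$ appearing through Lemma \ref{le:mu-moments-limit-LM} combine across the various $j$ with the inductively-known coefficients $c_j$ to reproduce exactly the double-factorial coefficients, i.e. verifying the recursion
\begin{align*}
	c_m(L+1) = \binom m2 c_{m-2} + (\text{lower-order-consistent contributions from } j = m-1, m-3,\dots),
\end{align*}
and simultaneously confirming that all error terms are genuinely $O(n^{L-1}\log n)$ (and, for odd $m$, $O(n^{L-1})$), which requires care because $(1+\mu_k-\mu_n)$ has size comparable to $n-k$, so high powers of it are not uniformly small and must be controlled via the $A(n,k)/n!$-weighted sums of Section \ref{sec:mean-value} rather than termwise. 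A secondary point to get right is the parity split: the odd and even cases have different leading coefficients and the induction couples $m$ to $m-2$ (same parity) for the main term but also pulls in $m-1$ (opposite parity) at a possibly comparable order, so the two statements in \eqref{eq:mom-asymp} genuinely need to be carried together.
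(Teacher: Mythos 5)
Your overall architecture matches the paper's: binomial expansion of $\bbE[((X_k-\mu_k)+(1+\mu_k-\mu_n))^m]$, reduction to the generic recurrence of Theorem \ref{th:xi-asymp}, induction on $m$ with the lemmas of Section \ref{sec:mean-value} supplying the coefficients, and Theorem \ref{th:xi-asymp-error} supplying the difference bound. But there is a genuine gap at the $\ell=1$ term (your $j=m-1$), and it is precisely the point the second half of the theorem statement exists to fix. Write $\bbE[(X_k-\mu_k)^{m-1}] = c_{m-1}k^{\lfloor (m-1)/2\rfloor}+\veps^{(m-1)}_k$. For $m=2M+1$ odd, the error piece is $\veps^{(2M)}_k=O(k^{M-1}\log k)$, and your plan to absorb it via Lemma \ref{le:mu-moments-limit-LM-log} yields only $O(n^{M-1}\log n)$ — which \emph{exceeds} the main term $\mathcal{M}n^{M-1}$ of the inhomogeneity $\lambda_n$, so the asymptotic is destroyed, not merely the error term. (The same failure already occurs at $m=3$, where $\veps^{(2)}_k=O(\log k)$ fed into Lemma \ref{le:mu-moments-limit-LM-log} gives $O(\log n)$ against a required $\delta_n=O(n^{-1})$.) The paper's resolution is to sum by parts against $R_n(t)=\sum_{k\le t}\frac{A(n,k)}{n!}(1+\mu_k-\mu_n)$, exploiting $R_n(n)=0$ and $R_n(t)\le 0$, which converts the sum into $\sum_k R_n(k)(\veps^{(m-1)}_k-\veps^{(m-1)}_{k+1})$ and lets the inductively known \emph{difference} bound \eqref{eq:mom-error-diff} for $m-1$ save the extra power of $n$. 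You never identify why \eqref{eq:mom-error-diff} is carried through the induction at all; without this mechanism the odd-order step (and the base case $m=3$) does not close. For even $m$ the naive estimate happens to suffice, since there $\veps^{(2M-1)}_k=O(k^{M-2}\log k)$ and Lemma \ref{le:mu-moments-limit-LM-log} already lands at the acceptable order $O(n^{M-2}\log n)$.

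A second, smaller error feeds your confusion at the end: your $L=\lceil m/2\rceil-1$ is wrong for odd $m$. For $m=2M+1$ the target is $\xi_n\sim\tfrac23 M(2M+1)!!\,n^{M}$, so Theorem \ref{th:xi-asymp} forces $L+1=M$, i.e.\ $L=\lfloor m/2\rfloor-1$ for \emph{both} parities. With the correct $L$, Theorem \ref{th:xi-asymp-error}(ii) gives $|\veps^{(m)}_n-\veps^{(m)}_{n-1}|=O(n^{L-1}\log n)=O(n^{\lfloor m/2\rfloor-2}\log n)$ directly; no ``separate refinement'' is needed. The price of the correct $L$ is exactly the sharper requirement $\delta_n=O(n^{M-2}\log n)$ discussed above, which is what makes the $R_n$-summation-by-parts unavoidable.
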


\begin{proof}[Outline of the Proof]
    We begin with \eqref{eq:E-poly} and find that
\begin{align*}
	\bbE\left[(X_n-\mu_n)^m\right] & =\sum_{k=1}^{n}\frac{A(n,k)}{n!}\cdot \bbE\left[\big(1+X_{k}-\mu_{n}\big)^{m}\right]\\
	&=\sum_{k=1}^{n}\frac{A(n,k)}{n!}\cdot \bbE\left[\big((X_{k}-\mu_{k})+(1+\mu_{k}-\mu_{n})\big)^{m}\right].
\end{align*}
Hence,
\begin{align}\label{eq:mth-M}
	\left(1-\frac{A(n,n)}{n!}\right)\bbE\big[(X_n-\mu_n)^m\big] = \sum_{k=1}^{n-1}\frac{A(n,k)}{n!}\,\bbE\big[(X_k-\mu_k)^m\big] + \sum_{\ell=1}^m \binom{m}{\ell} I_\ell^{(m)},
\end{align}
where for each $\ell$ with $1\le \ell\le m$,
\begin{align*}
	I_\ell^{(m)} := \sum_{k=1}^{n}\frac{A(n,k)}{n!}\cdot \bbE\big[(X_k-\mu_k)^{m-\ell}\big] \big(1+\mu_k-\mu_n\big)^{\ell}.
\end{align*}

In what follows, we first work on the variance and the third central moment in Subsections~\ref{sec:var} and \ref{sec:3rd}, respectively. Using them as initial cases, we then evaluate the higher central moments by an inductive argument in Subsection~\ref{sec:higher}. In this course, we shall consider even- and odd-order moments separately.
\end{proof}

%Now, we are ready to prove Theorem \ref{th:c-mom}. It is not hard to see, beyond the first and second central moments, we also have, e.g., $\bbE\big[(X-\mu_n)^3\big] \sim 2n$, $\bbE\big[(X-\mu_n)^4\big] \sim 3n^2$, etc. 
%Before presenting its proof, we make some preparations.

\subsection{Asymptotic relation for the variance}\label{sec:var}

For the variance $\Var[X_n]=\bbE\big[(X_n-\mu_n)^2\big]$, we explicitly paste \eqref{eq:mth-M}:
\begin{align}\label{eq:Var-rec}
	\left(1-\frac{A(n,n)}{n!}\right)\Var[X_n] = \sum_{k=1}^{n-1}\frac{A(n,k)}{n!}\,\Var[X_k] + I_1^{(2)} + I_2^{(2)}, 
\end{align}
where
\begin{align*}
	I_1^{(2)} &= \sum_{k=1}^{n}\frac{A(n,k)}{n!}\cdot \bbE\big[X_k-\mu_k\big] \big(1+\mu_k-\mu_n\big),\\
	I_2^{(2)} &= \sum_{k=1}^{n}\frac{A(n,k)}{n!}\cdot \big(1+\mu_{k}-\mu_{n}\big)^2.
\end{align*}

In light of \eqref{eq:mu-moments-limit-L2},
\begin{align}\label{eq:I2-2}
	I_2^{(2)} = 1 + O(n^{-1}).
\end{align}
Also, using the fact that
\begin{align}\label{eq:E-diff}
	\bbE\left[X_{k}-\mu_{k}\right] = \bbE\left[X_{k}\right]-\mu_{k} = \mu_k-\mu_k = 0,
\end{align}
we have
\begin{align}\label{eq:I2-1}
	I_1^{(2)} = 0.
\end{align}

Consequently, we insert \eqref{eq:I2-2} and \eqref{eq:I2-1} into \eqref{eq:Var-rec}, and obtain
\begin{align*}
	\left(1-\frac{A(n,n)}{n!}\right)\Var[X_n] = \sum_{k=1}^{n-1}\frac{A(n,k)}{n!}\,\Var[X_k] + 1 + O(n^{-1}).
\end{align*}
Applying Theorems \ref{th:xi-asymp} and \ref{th:xi-asymp-error} yields the desired result.

\subsection{Asymptotic relation for the third central moment}\label{sec:3rd}

The third central moment $\bbE\big[(X_n-\mu_n)^3\big]$ satisfies:
\begin{align}\label{eq:3rdM}
	\left(1-\frac{A(n,n)}{n!}\right)\bbE\big[(X_n-\mu_n)^3\big] = \sum_{k=1}^{n-1}\frac{A(n,k)}{n!}\,\bbE\big[(X_k-\mu_k)^3\big] + 3I_1^{(3)} + 3I_2^{(3)} + I_3^{(3)},
\end{align}
where
\begin{align*}
	I_1^{(3)} &= \sum_{k=1}^{n}\frac{A(n,k)}{n!}\cdot \bbE\big[(X_k-\mu_k)^{2}\big] \big(1+\mu_k-\mu_n\big),\\
	I_2^{(3)} &= \sum_{k=1}^{n}\frac{A(n,k)}{n!}\cdot \bbE\big[X_k-\mu_k\big] \big(1+\mu_{k}-\mu_{n}\big)^2,\\
	I_3^{(3)} &= \sum_{k=1}^{n}\frac{A(n,k)}{n!}\cdot \big(1+\mu_{k}-\mu_{n}\big)^3.
\end{align*}

First, by \eqref{eq:mu-moments-limit-L3},
\begin{align}\label{eq:I3-3}
	I_3^{(3)} = -1 + O(n^{-1}).
\end{align}
Also, \eqref{eq:E-diff} tells us that
\begin{align}\label{eq:I3-2}
	I_2^{(3)} = 0.
\end{align}
Now it remains to evaluate $I_1^{(3)}$. Recalling Theorem \ref{th:c-mom-new} for $m=2$, we split $I_1^{(3)}$ as
\begin{align*}
	I_1^{(3)} = I_{1,1}^{(3)} + I_{1,2}^{(3)},
\end{align*}
where
\begin{align*}
	I_{1,1}^{(3)} &:= \sum_{k=1}^{n}\frac{A(n,k)}{n!}\cdot k \big(1+\mu_k-\mu_n\big),\\
	I_{1,2}^{(3)} &:= \sum_{k=1}^{n}\frac{A(n,k)}{n!}\cdot \varepsilon^{(2)}_k \big(1+\mu_k-\mu_n\big).
\end{align*}
In view of \eqref{eq:mu-moments-limit-L1}, it is clear that
\begin{align}\label{eq:I3-11}
	I_{1,1}^{(3)} = 1 + O(n^{-1}).
\end{align}
In what follows, our objective is to show
\begin{equation}\label{eq:I3-12}
	I_{1,2}^{(3)} = O(n^{-1}).
\end{equation}
To begin with, we define, for $1\le t\le n$,
\begin{align}\label{eq:R-def}
	R_n(t):= \sum_{k=1}^t \frac{A(n,k)}{n!}\big(1+\mu_k-\mu_n\big).
\end{align}
Clearly, $R_n(t)\le 0$ whenever $1\le t\le n-1$ since $\mu_k-\mu_n\le -1$ for every $k$ with $1\le k\le t\le n-1$ according to Lemma \ref{le:mun-muk}. In the meantime, it was shown in \eqref{eq:mu-moments-limit-01} that
\begin{align*}
	R_n(n)=0.
\end{align*}
In light of the Abel summation formula,
\begin{align*}
	I_{1,2}^{(3)} = R_n(n) \veps^{(2)}_{n+1} + \sum_{k=1}^{n} R_n(k) \big(\veps^{(2)}_{k}-\veps^{(2)}_{k+1}\big)= \sum_{k=1}^{n} R_n(k) \big(\veps^{(2)}_{k}-\veps^{(2)}_{k+1}\big).
\end{align*}
Recalling from \eqref{eq:mom-error-diff}, there exists a constant $c^{(2)}$ such that
\begin{align*}
	\big|\veps^{(2)}_{k}-\veps^{(2)}_{k+1}\big|\le \frac{c^{(2)}}{k}
\end{align*}
for every $k$ with $1\le k\le n$. Thus,
\begin{align*}
	\big|I_{1,2}^{(3)}\big|\le -\sum_{k=1}^{n} R_n(k) \big|\veps^{(2)}_{k}-\veps^{(2)}_{k+1}\big|\le -\sum_{k=1}^{n} R_n(k)\cdot \frac{c^{(2)}}{k}.
\end{align*}
Let
\begin{align*}
	\widehat{I}_{1,2}^{(3)}:= \sum_{k=1}^{n} R_n(k)\cdot \frac{1}{k}.
\end{align*}
To prove \eqref{eq:I3-12}, it will be sufficient to show that
\begin{equation} \label{eq:I3-22-new}
	\widehat{I}_{1,2}^{(3)} = O(n^{-1}).
\end{equation}
Utilizing the fact that $R_n(n)=0$ and recalling the definition \eqref{eq:R-def}, we have
\begin{align*}
	\widehat{I}_{1,2}^{(3)} = \sum_{k=1}^{n-1} \frac{1}{k} \sum_{j=1}^k \frac{A(n,j)}{n!}\big(1+\mu_j-\mu_n\big)
	= \sum_{j=1}^{n-1} \frac{A(n,j)}{n!}\big(1+\mu_j-\mu_n\big)\cdot \sum_{k=j}^{n-1} \frac{1}{k}.
\end{align*}
Thus, noting the plain inequality
\begin{align*}
	\sum_{k=j}^{n-1} \frac{1}{k}\le \frac{n-j}{j},
\end{align*}
we have
\begin{align*}
	\big|\widehat{I}_{1,2}^{(3)}\big| &\le \sum_{j=1}^{n-1} \frac{A(n,j)}{n!}\cdot \sum_{k=j}^{n-1} \frac{1}{k} + \sum_{j=1}^{n-1} \frac{A(n,j)}{n!}\big(\mu_n-\mu_j\big) \cdot \sum_{k=j}^{n-1} \frac{1}{k}\\
	&\le \sum_{j=1}^{n-1} \frac{A(n,j)}{n!} \frac{n-j}{j} + \sum_{j=1}^{n-1} \frac{A(n,j)}{n!}\big(\mu_n-\mu_j\big)\frac{n-j}{j}\\
	&= O(n^{-1}),
\end{align*}
as claimed in \eqref{eq:I3-22-new}. Here we have applied \eqref{eq:mu-moments-limit-k-1} to each summation in the second line for the final asymptotic relation. Hence, combining \eqref{eq:I3-11} and \eqref{eq:I3-12} gives us
\begin{align}\label{eq:I3-1}
	I_{1}^{(3)} = 1 + O(n^{-1}).
\end{align}

By substituting \eqref{eq:I3-3}, \eqref{eq:I3-2} and \eqref{eq:I3-1} into \eqref{eq:3rdM}, we have
\begin{align*}
	\left(1-\frac{A(n,n)}{n!}\right)\bbE\big[(X_n-\mu_n)^3\big] = \sum_{k=1}^{n-1}\frac{A(n,k)}{n!}\,\bbE\big[(X_k-\mu_k)^3\big] + 2 + O(n^{-1}).
\end{align*}
The required result follows from Theorems \ref{th:xi-asymp} and \ref{th:xi-asymp-error}.

\subsection{Asymptotic relations for higher central moments}\label{sec:higher}

For higher central moments in \eqref{eq:mom-asymp}, we shall use mathematical induction by first assuming the validity for $2,3,\ldots, 2M-1$, and then proving the cases of $2M$ and $2M+1$ sequentially.

\subsubsection{Even-order central moments}

Recall from \eqref{eq:mth-M} that
\begin{align*}
	\left(1-\frac{A(n,n)}{n!}\right)\bbE\big[(X_n-\mu_n)^{2M}\big] = \sum_{k=1}^{n-1}\frac{A(n,k)}{n!}\,\bbE\big[(X_k-\mu_k)^{2M}\big]
	+ \sum_{\ell=1}^{2M} \binom{2M}{\ell} I_\ell^{(2M)},
\end{align*}
where
\begin{align*}
	I_1^{(2M)} &= \sum_{k=1}^{n}\frac{A(n,k)}{n!}\cdot \bbE\big[(X_k-\mu_k)^{2M-1}\big] \big(1+\mu_k-\mu_n\big),\\
	I_2^{(2M)} &= \sum_{k=1}^{n}\frac{A(n,k)}{n!}\cdot \bbE\big[(X_k-\mu_k)^{2M-2}\big] \big(1+\mu_k-\mu_n\big)^{2},\\
	&=\quad \vdots \\
	I_{2M-1}^{(2M)} &= \sum_{k=1}^{n}\frac{A(n,k)}{n!}\cdot \bbE\big[X_k-\mu_k\big] \big(1+\mu_{k}-\mu_{n}\big)^{2M-1},\\
	I_{2M}^{(2M)} &= \sum_{k=1}^{n}\frac{A(n,k)}{n!}\cdot \big(1+\mu_{k}-\mu_{n}\big)^{2M}.
\end{align*}

As before,
\begin{align*}
	I_{2M}^{(2M)} = O(1),
\end{align*}
and
\begin{align*}
	I_{2M-1}^{(2M)} = 0.
\end{align*}
Also, for every $L$ with $1\le L\le M-1$, we know from the inductive hypothesis that
\begin{align*}
	\bbE\big[(X_n-\mu_n)^{2L}\big] &= (2L-1)!!\cdot n^L + O(n^{L-1}\log n),\\
	\bbE\big[(X_n-\mu_n)^{2L+1}\big] &= \tfrac{2}{3}L(2L+1)!!\cdot n^L + O(n^{L-1}\log n).
\end{align*}
In light of \eqref{eq:mu-moments-limit-LM}, \eqref{eq:mu-moments-limit-L1} and \eqref{eq:mu-moments-limit-LM-log}, we derive that for $2\le L'\le M-1$,
\begin{align*}
	I_{2L'-1}^{(2M)} &= O(n^{M-L'}),\\
	I_{2L'}^{(2M)} &= O(n^{M-L'}).
\end{align*}
Also,
\begin{align*}
	I_{1}^{(2M)} &= O(n^{M-2}\log n),\\
	I_{2}^{(2M)} &= (2M-3)!!\cdot n^{M-1} + O(n^{M-2}\log n).
\end{align*}

Therefore,
\begin{align*}
	\left(1-\frac{A(n,n)}{n!}\right)\bbE\big[(X_n-\mu_n)^{2M}\big] &= \sum_{k=1}^{n-1}\frac{A(n,k)}{n!}\,\bbE\big[(X_k-\mu_k)^{2M}\big]\\
	&\quad + \tbinom{2M}{2}(2M-3)!!\cdot n^{M-1} + O(n^{M-2}\log n).
\end{align*}
Noting that
\begin{align*}
	\tfrac{1}{(M-1)+1}\cdot \tbinom{2M}{2}(2M-3)!! = (2M-1)!!,
\end{align*}
we conclude the case of $2M$ in Theorem \ref{th:c-mom-new} by virtue of Theorems \ref{th:xi-asymp} and \ref{th:xi-asymp-error}.

\subsubsection{Odd-order central moments}

By virtue of \eqref{eq:mth-M},
\begin{align*}
	\left(1-\frac{A(n,n)}{n!}\right)\bbE\big[(X_n-\mu_n)^{2M+1}\big] &= \sum_{k=1}^{n-1}\frac{A(n,k)}{n!}\,\bbE\big[(X_k-\mu_k)^{2M+1}\big]\\
    &\quad + \sum_{\ell=1}^{2M+1} \binom{2M+1}{\ell} I_\ell^{(2M+1)},
\end{align*}
where
\begin{align*}
	I_1^{(2M+1)} &= \sum_{k=1}^{n}\frac{A(n,k)}{n!}\cdot \bbE\big[(X_k-\mu_k)^{2M}\big] \big(1+\mu_k-\mu_n\big),\\
	I_2^{(2M+1)} &= \sum_{k=1}^{n}\frac{A(n,k)}{n!}\cdot \bbE\big[(X_k-\mu_k)^{2M-1}\big] \big(1+\mu_k-\mu_n\big)^{2},\\
	I_3^{(2M+1)} &= \sum_{k=1}^{n}\frac{A(n,k)}{n!}\cdot \bbE\big[(X_k-\mu_k)^{2M-2}\big] \big(1+\mu_k-\mu_n\big)^{3},\\
	&=\quad \vdots \\
	I_{2M}^{(2M+1)} &= \sum_{k=1}^{n}\frac{A(n,k)}{n!}\cdot \bbE\big[X_k-\mu_k\big] \big(1+\mu_{k}-\mu_{n}\big)^{2M},\\
	I_{2M+1}^{(2M+1)} &= \sum_{k=1}^{n}\frac{A(n,k)}{n!}\cdot \big(1+\mu_{k}-\mu_{n}\big)^{2M+1}.
\end{align*}

In the same vein,
\begin{align*}
	I_{2M+1}^{(2M+1)} = O(1),
\end{align*}
and
\begin{align*}
	I_{2M}^{(2M+1)} = 0.
\end{align*}
Also, for $2\le L'\le M-1$,
\begin{align*}
	I_{2L'}^{(2M+1)} &= O(n^{M-L'}),\\
	I_{2L'+1}^{(2M+1)} &= O(n^{M-L'}).
\end{align*}
Furthermore,
\begin{align*}
	I_{2}^{(2M+1)} &= \tfrac{2}{3}(M-1)(2M-1)!!\cdot n^{M-1} + O(n^{M-2}\log n),\\
	I_{3}^{(2M+1)} &= -(2M-3)!!\cdot n^{M-1} + O(n^{M-2}\log n).
\end{align*}
It remains to estimate $I_{1}^{(2M+1)}$. Recalling from the inductive hypothesis that
\begin{align*}
	\bbE\big[(X_k-\mu_k)^{2M}\big] = (2M-1)!!\cdot n^{M} + \varepsilon^{(2M)}_n,
\end{align*}
we rewrite $I_{1}^{(2M+1)}$ as
\begin{align}\label{eq:I-2m+1-1-def}
	I_{1}^{(2M+1)} = (2M-1)!!\cdot I_{1,1}^{(2M+1)} + I_{1,2}^{(2M+1)}
\end{align}
where
\begin{align*}
	I_{1,1}^{(2M+1)}&:=\sum_{k=1}^{n}\frac{A(n,k)}{n!}\cdot k^{M} \big(1+\mu_k-\mu_n\big),\\
	I_{1,2}^{(2M+1)}&:=\sum_{k=1}^{n}\frac{A(n,k)}{n!}\cdot \varepsilon^{(2M)}_k \big(1+\mu_k-\mu_n\big).
\end{align*}
Now we first derive from \eqref{eq:mu-moments-limit-L1} that
\begin{align}\label{eq:I2M+1-11}
	I_{1,1}^{(2M+1)} = M\cdot n^{M-1} + O(n^{M-2}).
\end{align}
Next, we claim that
\begin{equation}\label{eq:I2M+1-12}
	I_{1,2}^{(2M+1)} = O(n^{M-2}\log n).
\end{equation}
Recalling the partial sums defined in \eqref{eq:R-def}, we apply the Abel summation formula and obtain
\begin{align*}
	I_{1,2}^{(2M+1)} = \sum_{k=1}^{n} R_n(k) \big(\veps^{(2M)}_{k}-\veps^{(2M)}_{k+1}\big).
\end{align*}
Recalling from \eqref{eq:mom-error-diff}, there exists a constant $c^{(2M)}$ such that
\begin{align*}
	\big|\veps^{(2M)}_{k}-\veps^{(2M)}_{k+1}\big|\le c^{(2M)}k^{M-2}\log k
\end{align*}
for every $k$ with $1\le k\le n$. Thus,
\begin{align*}
	\big|I_{1,2}^{(2M+1)}\big|\le -\sum_{k=1}^{n} R_n(k) \big|\veps^{(2M)}_{k}-\veps^{(2M)}_{k+1}\big|\le -\sum_{k=1}^{n} R_n(k)\cdot c^{(2M)}k^{M-2}\log k.
\end{align*}
Let
\begin{align*}
	\widehat{I}_{1,2}^{(2M+1)}:= \sum_{k=1}^{n} R_n(k)\cdot k^{M-2}\log k.
\end{align*}
We shall prove that
\begin{equation} \label{eq:I2M+1-12-new}
	\widehat{I}_{1,2}^{(2M+1)} = O(n^{M-2}\log n),
\end{equation}
thereby yielding \eqref{eq:I2M+1-12}. Since $R_n(n)=0$, we recall the definition \eqref{eq:R-def} and get
\begin{align*}
	\widehat{I}_{1,2}^{(2M+1)} &= \sum_{k=1}^{n-1} k^{M-2}\log k \sum_{j=1}^k \frac{A(n,j)}{n!}\big(1+\mu_j-\mu_n\big)\\
	&= \sum_{j=1}^{n-1} \frac{A(n,j)}{n!}\cdot \sum_{k=j}^{n-1} k^{M-2}\log k - \sum_{j=1}^{n-1} \frac{A(n,j)}{n!}\big(\mu_n-\mu_j\big)\cdot \sum_{k=j}^{n-1} k^{M-2}\log k.
\end{align*}
Noting that
\begin{align*}
	\sum_{k=j}^{n-1} k^{M-2}\log k\le n^{M-2}\log n\cdot \big(n-j\big),
\end{align*}
we have, by virtue of \eqref{eq:mu-moments-limit},
\begin{align*}
	\big|\widehat{I}_{1,2}^{(2M+1)}\big| &\le n^{M-2}\log n\cdot \left(\sum_{j=1}^{n-1} \frac{A(n,j)}{n!} \big(n-j\big) + \sum_{j=1}^{n-1} \frac{A(n,j)}{n!}\big(n-j\big)\big(\mu_n-\mu_j\big)\right)\\
	&= O(n^{M-2}\log n).
\end{align*}
Therefore, \eqref{eq:I2M+1-12-new} holds, so does \eqref{eq:I2M+1-12}. Inserting \eqref{eq:I2M+1-11} and \eqref{eq:I2M+1-12} into \eqref{eq:I-2m+1-1-def} gives
\begin{align*}
	I_{1}^{(2M+1)} = M(2M-1)!!\cdot n^{M-1} + O(n^{M-2}\log n).
\end{align*}

In conclusion,
\begin{align*}
	&\left(1-\frac{A(n,n)}{n!}\right)\bbE\big[(X_n-\mu_n)^{2M+1}\big]\\
	&= \sum_{k=1}^{n-1}\frac{A(n,k)}{n!}\,\bbE\big[(X_k-\mu_k)^{2M+1}\big]\\
	&\quad + \left(\tbinom{2M+1}{1}M(2M-1)!!+\tbinom{2M+1}{2}\tfrac{2}{3}(M-1)(2M-1)!!-\tbinom{2M+1}{3}(2M-3)!!\right)\cdot n^{M-1}\\
	&\quad + O(n^{M-2}\log n).
\end{align*}
Noting that $\tfrac{2}{3}M(2M+1)!!$ equals
\begin{align*}
	\tfrac{1}{(M-1)+1}\cdot \left(\tbinom{2M+1}{1} M(2M-1)!!+\tbinom{2M+1}{2}\tfrac{2}{3}(M-1)(2M-1)!!-\tbinom{2M+1}{3}(2M-3)!!\right),
\end{align*}
we arrive at the case of $2M+1$ in Theorem \ref{th:c-mom-new} after applying Theorems \ref{th:xi-asymp} and \ref{th:xi-asymp-error}.

\section*{Acknowledgment}

The second and last authors would like to thank our colleague, Dr. Xingshi Cai from Duke Kunshan University, for the discussion at the early stage of this project, in our Discrete Math Seminar.

\bibliographystyle{amsplain}

\begin{thebibliography}{99}

    \bibitem{Bil1995}
    Billingsley, P.: 
    \textit{Probability and Measure. Third Edition}, John Wiley \& Sons, Inc., New York, 1995.
	
	\bibitem{Cha2002}
	Charalambides, C.A.: 
	\textit{Enumerative Combinatorics}, 
	Chapman \& Hall/CRC, Boca Raton, FL, 2002.
	
	\bibitem{Com1974}
	Comtet, L.:
	\textit{Advanced Combinatorics. The Art of Finite and Infinite Expansions}, 
	D. Reidel Publishing Co., Dordrecht, 1974.
	
%	\bibitem{Durrett} 
%	Durrett, R.: 
%	\textit{Probability Theory and Examples, Fifth Edition}, 
%	Cambridge University Press, Cambridge, 2019.

    \bibitem{Eul1782}
    Euler, L.: 
    Recherches sur une nouvelle esp\`ece des quarr\'es magiques, 
    \textit{Verhandelingen uitgegeven door het zeeuwsch Genootschap der Wetenschappen te Vlissingen} \textbf{9} (1782), 85--239.
	
	\bibitem{Kreweras} 
    Kreweras, G.: 
    The number of more or less ``regular'' permutations, 
    \textit{Fibonacci Quart.} \textbf{18} (1980), no. 3, 226--229.
	
	\bibitem{Myers} 
    Myers, A.N.: 
    Counting permutations by their rigid patterns, 
    \textit{J. Combin. Theory Ser. A} \textbf{99} (2002), no. 2, 345--357.
	
	\bibitem{Rao}  
    Rao, M.B., Zhang, H., Huang, C., and Cheng, F.-C.: 
    A discrete probability problem in card shuffling, 
    \textit{Comm. Statist. Theory Methods} \textbf{45} (2016), no. 3, 612--620.

    \bibitem{OEIS}
	Sloane, N.J.A.: 
    On-Line Encyclopedia of Integer Sequences; \url{http://oeis.org}.
	
	\bibitem{Ten2015}
	Tenenbaum, G.: 
	\textit{Introduction to Analytic and Probabilistic Number Theory. Third Edition}, 
	American Mathematical Society, Providence, RI, 2015.
	
	\bibitem{Wil2006}
	Wilf, H.S.:
	\textit{Generatingfunctionology. Third Edition}, 
	A K Peters, Ltd., Wellesley, MA, 2006.
	
\end{thebibliography}

\end{document}